\theoremstyle{plain}
\newtheorem{thm}{\protect\theoremname}
  \theoremstyle{plain}
  \newtheorem{prop}[thm]{\protect\propositionname}
  \theoremstyle{plain}
  \newtheorem{lem}[thm]{\protect\lemmaname}
  \theoremstyle{remark}
  \newtheorem{rem}[thm]{\protect\remarkname}
\global\long\def\ii{\mathbf{i}}
\global\long\def\C{\mathbb{C}}
\global\long\def\R{\mathbb{R}}
\global\long\def\phi{\varphi}
\global\long\def\epsilon{\varepsilon}
\global\long\def\div{{\rm div}}
\global\long\def\curl{{\rm curl}}
\global\long\def\Cl{{C}}
\global\long\def\maxa{\Lambda}
\global\long\def\Hcurl{H(\curl,\Omega)}
\global\long\def\Hdiv{H(\div,\Omega)}
\global\long\def\Domega{\mathcal{D}(\Omega)}
\global\long\def\tepsi{\varepsilon^{T}}
\global\long\def\tmuu{\mu^{T}}
\global\long\def\k{\omega}
\global\long\def\freqset{\in\C\setminus\{0\}}
\global\long\def\ve{\eta}
\global\long\def\ik{\ii\k^{-1}}
\setlist{leftmargin=*}
  \providecommand{\lemmaname}{Lemma}
  \providecommand{\propositionname}{Proposition}
  \providecommand{\remarkname}{Remark}
\providecommand{\theoremname}{Theorem}
\begin{document}
\title[Elliptic Regularity for Maxwell's Equations]{Elliptic regularity theory applied to time harmonic anisotropic Maxwell's equations with less than Lipschitz complex coefficients}

\author{Giovanni S. Alberti}
\address{Mathematical Institute, University of Oxford, Andrew Wiles Building, Radcliffe Observatory Quarter, Woodstock Road, Oxford OX2 6GG, UK} \email{giovanni.alberti@maths.ox.ac.uk}

\author{Yves Capdeboscq}
\address{Mathematical Institute, University of Oxford, Andrew Wiles Building, Radcliffe Observatory Quarter, Woodstock Road, Oxford OX2 6GG, UK} \email{yves.capdeboscq@maths.ox.ac.uk}

\thanks{The authors are supported by the EPSRC Science \& Innovation Award to the Oxford Centre for Nonlinear PDE (EP/EO35027/1).}

\subjclass[2010]{35Q61, 35J57, 35B65, 35Q60}
\begin{abstract}
The focus of this paper is the study of the regularity properties
of the time harmonic Maxwell's equations with anisotropic complex
coefficients, in a bounded domain with $\Cl^{1,1}$ boundary. We assume
that at least one of the material parameters is $W^{1,p}$ for some
$p>3$. Using regularity theory for second order elliptic partial
differential equations, we derive $W^{1,p}$ estimates and Hölder
estimates for electric and magnetic fields up to the boundary, together
with their higher regularity counterparts. We also derive interior
estimates in bi-anisotropic media. 
\end{abstract}

\keywords{Maxwell's equations, Hölder estimates, $L^{p}$ regularity, anisotropic
media, bi-anisotropic media.}

\date{December 28, 2013}

\maketitle

\section{\label{sec:Introduction}Introduction}

Let $\Omega\subseteq\mathbb{R}^{3}$ be a bounded and connected open
set in $\mathbb{R}^{3}$, with $\Cl^{1,1}$ boundary. Let $\varepsilon,{\mu}\in L^{\infty}\left(\Omega;\mathbb{C}^{3\times3}\right)$
be two bounded complex matrix-valued functions with uniformly positive
definite real parts and symmetric imaginary parts. In other words,
there exists a constant $\Lambda>0$ such that for any $\lambda\in\mathbb{C}^{3}$
there holds 
\begin{equation}
2\Lambda\left|\lambda\right|^{2}\le\overline{\lambda}\cdot\left(\varepsilon+\overline{\varepsilon}^{T}\right)\lambda,\,2\Lambda\left|\lambda\right|^{2}\le\overline{\lambda}\cdot\left({\mu}+\overline{\mu}^{T}\right)\lambda\mbox{ and }\left|{\mu}\right|+\left|\varepsilon\right|\le\maxa^{-1}\mbox{ a.e. in }\ensuremath{\Omega},\label{eq:Hyp-Ellip}
\end{equation}
 where $a^{T}$ is the transpose of $a$, $\overline{a}=\Re(a)-\ii\Im(a)$,
where $\ii^{2}=-1$, and $|x|=\mbox{Trace}(\overline{x}^{T}x)$ is
the Euclidean norm. The $3\times3$ matrix $\varepsilon$ represents
the complex electric permittivity of the medium $\Omega$: its real
part is the physical electric permittivity, whereas its imaginary
part is proportional to the electric conductivity, by Ohm's Law. The
$3\times3$ matrix $\mu$ stands for the complex magnetic permeability:
the imaginary part may model magnetic dissipation or lag time.

For a given frequency $\k\freqset$ and current sources $J_{e}$ and
$J_{m}$ in $L^{2}\left(\Omega;\mathbb{C}^{3}\right)$ we are interested
in the regularity of the time-harmonic electromagnetic fields $E$
and $H$, that is, the weak solutions $E$ and $H$ in $\Hcurl$ of
the time-harmonic anisotropic Maxwell's equations 
\begin{equation}
\left\{ \begin{array}{l}
\curl H=\ii\k\varepsilon E+J_{e}\qquad\mbox{ in }\Omega,\\
\curl E=-\ii\k{\mu}H+J_{m}\qquad\mbox{ in }\Omega,\\
E\times\nu=G\times\nu\mbox{ on }\partial\Omega.
\end{array}\right.\label{eq:maxwell}
\end{equation}
 The boundary constraint is meant in the sense of traces, with $G\in H\left(\curl,\Omega\right)$.
Our focus is the dependence of the regularity of $E$ and $H$ on
the coefficients $\varepsilon$ and $\mu$, the current sources $J_{e}$
and $J_{m}$, and the boundary condition $G$. The precise dependence
on the regularity of the boundary of $\Omega$ is beyond the scope
of this work. We refer the reader to \cite{AMROUCHE-BERNARDI-DAUGE-GIRAULT-1998,COSTABEL-DAUGE-2000,BUFFA-COSTABEL-SHEEN-2002}
where domains with rougher boundaries are considered. For $N\in\mathbb{N}^{*}$
and $p>1$ we denote by $W^{N,p}\left(\curl,\Omega\right)$ and $W^{N,p}\left(\div,\Omega\right)$
the Banach spaces 
\begin{eqnarray*}
 & W^{N,p}\left(\curl,\Omega\right)=\bigl\{ v\in W^{N-1,p}\left(\Omega;\mathbb{C}^{3}\right):\curl v\in W^{N-1,p}(\Omega;\mathbb{C}^{3})\bigr\},\\
 & W^{N,p}\left(\div,\Omega\right)=\bigl\{ v\in W^{N-1,p}\left(\Omega;\mathbb{C}^{3}\right):\div v\in W^{N-1,p}(\Omega;\mathbb{C})\bigr\},
\end{eqnarray*}
 equipped with canonical norms. The space $W^{1,2}\left(\curl,\Omega\right)$
is the space $H\left(\curl,\Omega\right)$ mentioned above, and $W^{1,2}\!\left(\div,\Omega\right)$
is commonly denoted by $H\left(\div,\Omega\right)$. Throughout this
paper, $H^{1}(\Omega)=W^{1,2}\left(\Omega;\mathbb{C}^{3}\right)$
and $L^{2}(\Omega)=L^{2}\left(\Omega;\mathbb{C}^{3}\right)$.

It is very well known that when the domain is a cylinder $\Omega^{\prime}\times(0,L)$,
the electric field $E$ has only one component, $E=(0,0,u)^{T}$,
the physical parameters are real, scalar and do not depend on the
third variable, then $u$ satisfies a second order elliptic equation
in the first two variables 
\[
\div\left({\mu}^{-1}\nabla u\right)+\k^{2}\varepsilon u=0\mbox{ in }\Omega^{\prime}.
\]
 In such a case, the regularity of $u$ follows from classical elliptic
regularity theory. In particular, $u$ is Hölder continuous due to
the De Giorgi--Nash Theorem (at least in the interior). The regularity
of $E$ and $H$ is less clear when the material parameters are anisotropic
and/or complex valued. For general non diagonal elliptic systems with
non regular coefficients, Müller and {Š}ver{á}k \cite{MULLER-SVERAK-2003}
have shown that the solutions may not be in $W^{1,2+\delta}$ for
any $\delta>0$ . Assuming that the coefficients are real, anisotropic,
suitably smooth matrices, Leis \cite{LEIS-1968} established well-posedness
in $H^{1}(\Omega)$. The regularity of the coefficients was reduced
to globally Lipschitz in Weber \cite{WEBER-1981}, for a $\Cl^{2}$
smooth boundary, and $\Cl^{1}$ for a $\Cl^{1,1}$ domain in Costabel
\cite{COSTABEL-1991}.

As far as the authors are aware, neither the $H^{1}$ nor the Hölder
regularity of the electric and magnetic fields for complex anisotropic
less than Lipschitz media have been addressed so far. Anisotropic
dielectric parameters have received a renewed attention in the last
decades. They appear for example in the mathematical theory of liquid
crystals, in optically chiral media, and in meta-materials. In this
work we show that the theory of elliptic boundary value problems can
be used to study the general case of complex anisotropic coefficients.

Our first result addresses the $H^{1}(\Omega)$ regularity of $E$. 
\begin{thm}
\label{thm:H1 for E}Assume that \eqref{eq:Hyp-Ellip} holds, and
that $\varepsilon$ also satisfies 
\begin{equation}
\varepsilon\in W^{1,3+\delta}\left(\Omega;\mathbb{C}^{3\times3}\right)\mbox{ for some }\delta>0.\label{eq:Hyp2-epsilon}
\end{equation}
 Suppose that the source terms $J_{m},\, J_{e}$ and $G$ satisfy
\begin{equation}
J_{m}\in L^{p}\left(\Omega;\C^{3}\right),\, J_{e}\in W^{1,p}\left(\div,\Omega\right)\mbox{ and }G\in W^{1,p}\left(\Omega;\mathbb{C}^{3}\right),\label{eq:reg-E}
\end{equation}
 for some $p\geq2.$ If $E,H\in H\left(\curl,\Omega\right)$ are weak
solutions of \eqref{eq:maxwell}, then $E\in H^{1}\left(\Omega\right)$
and 
\begin{equation}
\left\Vert E\right\Vert _{H^{1}\left(\Omega\right)}\le C\bigl(\left\Vert E\right\Vert _{H\left(\curl,\Omega\right)}+\left\Vert G\right\Vert _{H^{1}\left(\Omega\right)}+\left\Vert J_{m}\right\Vert _{L^{2}\left(\Omega\right)}+\left\Vert J_{e}\right\Vert _{\Hdiv}\bigr),\label{eq:bound norm E in H^1}
\end{equation}
 for some constant $C$ depending on $\Omega$, $\Lambda$ given in
\eqref{eq:Hyp-Ellip}, $\k$ and $\left\Vert \varepsilon\right\Vert _{W^{1,3+\delta}\left(\Omega;\mathbb{C}^{3\times3}\right)}$
only. 
\end{thm}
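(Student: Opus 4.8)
The plan is to read off from \eqref{eq:maxwell} that $E$ has square-integrable curl and square-integrable \emph{weighted} divergence $\div(\varepsilon E)$, to strip off the tangential boundary data, and then to recover the full gradient by second order elliptic regularity.

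First I would extract the two first order quantities. The second equation in \eqref{eq:maxwell} gives $\curl E=-\ii\omega\mu H+J_{m}$; since $H\in\Hcurl\subset L^{2}(\Omega)$, $\mu\in L^{\infty}$ and $J_{m}\in L^{p}(\Omega)\subset L^{2}(\Omega)$ (recall $p\ge2$), this shows $\curl E\in L^{2}(\Omega)$ with $\|\curl E\|_{L^{2}}\le C(\|H\|_{L^{2}}+\|J_{m}\|_{L^{2}})$. Taking the distributional divergence of the first equation and using $\div\curl=0$ yields $\ii\omega\,\div(\varepsilon E)+\div J_{e}=0$, so that $\div(\varepsilon E)=-(\ii\omega)^{-1}\div J_{e}\in L^{p}(\Omega)\subset L^{2}(\Omega)$, because $J_{e}\in W^{1,p}(\div,\Omega)$ forces $\div J_{e}\in L^{p}(\Omega)$. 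Since $G\in W^{1,p}(\Omega)\subset H^{1}(\Omega)$, it suffices to prove that $\tilde E:=E-G$ lies in $H^{1}(\Omega)$; note $\tilde E\times\nu=0$ on $\partial\Omega$, $\curl\tilde E\in L^{2}$, and, using $\varepsilon\in W^{1,3+\delta}\hookrightarrow C^{0,\alpha}(\overline\Omega)$ together with $\nabla G\in L^{2}$ and $G\in L^{6}$, also $\div(\varepsilon\tilde E)\in L^{2}(\Omega)$ (the problematic term, the product of $\nabla\varepsilon\in L^{3+\delta}$ with $G\in L^{6}$, lies in $L^{r}$ with $1/r<1/2$).

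The core is then the assertion that a field $v$ with $\curl v\in L^{2}$, $\div(\varepsilon v)\in L^{2}$ and $v\times\nu=0$ belongs to $H^{1}(\Omega)$, with the associated estimate. I would organise this through a scalar potential: by \eqref{eq:Hyp-Ellip} the sesquilinear form $(\psi,\chi)\mapsto\int_{\Omega}\varepsilon\nabla\psi\cdot\nabla\overline\chi$ is coercive on $H_{0}^{1}(\Omega)$, so Lax--Milgram produces $\phi\in H_{0}^{1}(\Omega)$ solving $\div(\varepsilon\nabla\phi)=\div(\varepsilon\tilde E)$. Because $\varepsilon\in W^{1,3+\delta}$ I may rewrite this in non-divergence form, $\varepsilon:D^{2}\phi=\div(\varepsilon\tilde E)-(\div\varepsilon^{T})\cdot\nabla\phi$, whose leading coefficient is continuous, hence VMO; the Calder\'on--Zygmund theory for such equations on a $\Cl^{1,1}$ domain gives $\phi\in W^{2,q}$ whenever the right-hand side lies in $L^{q}$. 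Starting from $\nabla\phi\in L^{2}$ and iterating, each step raises the integrability exponent of $\nabla\phi$ by the fixed amount dictated by the Sobolev embedding against $\div\varepsilon\in L^{3+\delta}$; since $3+\delta>3$ this gain is strictly positive and the bootstrap reaches $\phi\in H^{2}(\Omega)$ in finitely many steps. This is precisely where the threshold $p>3$ enters.

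It then remains to treat $u:=\tilde E-\nabla\phi$, which satisfies $\div(\varepsilon u)=0$, $\curl u=\curl\tilde E\in L^{2}$ and $u\times\nu=0$ (the last because $\phi\in H_{0}^{1}\cap H^{2}$ makes $\nabla\phi$ normal at the boundary), so that $E=u+\nabla\phi+G$ with the last two terms already in $H^{1}$. Since $\varepsilon u$ is now genuinely divergence-free, I would represent it by a vector potential and invoke the analogous elliptic regularity for the resulting (weighted Hodge) system, using $\curl u\in L^{2}$ and the same $W^{1,3+\delta}$ bootstrap, to conclude $u\in H^{1}(\Omega)$; chaining the estimates from all the steps then yields \eqref{eq:bound norm E in H^1} with $C$ depending only on $\Omega$, $\Lambda$, $\omega$ and $\|\varepsilon\|_{W^{1,3+\delta}}$. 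I expect the genuine obstacle to be this final, homogeneous, anisotropic step: the classical results of Weber and Costabel require $\varepsilon$ to be Lipschitz, and the whole point here is to descend below Lipschitz by exploiting the single Sobolev derivative $\nabla\varepsilon\in L^{3+\delta}$ with $3+\delta>3$ rather than an $L^{\infty}$ bound on $\nabla\varepsilon$, while the complex-valued anisotropy rules out any scalar reduction and forces one to work with the full first order system throughout.
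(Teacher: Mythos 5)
Your preliminary reductions are sound: extracting $\curl E\in L^{2}(\Omega)$ and $\div(\varepsilon E)=-(\ii\k)^{-1}\div J_{e}\in L^{2}(\Omega)$ from \eqref{eq:maxwell}, subtracting $G$, and splitting off a scalar potential $\phi\in H_{0}^{1}(\Omega)$ with $\div(\varepsilon\nabla\phi)=\div(\varepsilon\tilde{E})$ are all legitimate, and the bootstrap you sketch for $\phi\in H^{2}(\Omega)$ (non-divergence form, VMO leading coefficient, integrability gain from $\nabla\varepsilon\in L^{3+\delta}$) can be made to work --- incidentally, the relevant threshold there is the exponent $3+\delta>3$ for $\varepsilon$, not ``$p>3$''; the theorem only asks $p\ge2$. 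The genuine gap is the last step, and you flag it yourself: you reduce everything to the claim that a field $u$ with $\curl u\in L^{2}(\Omega)$, $\div(\varepsilon u)=0$ and $u\times\nu=0$ lies in $H^{1}(\Omega)$, and for this you offer only the phrase ``represent it by a vector potential and invoke the analogous elliptic regularity for the resulting (weighted Hodge) system.'' No such off-the-shelf result exists below Lipschitz coefficients: that claim \emph{is} the theorem (it is the special case $J_{e}=0$, $J_m=0$, $G=0$ of what you are proving), and writing $\varepsilon u=\curl w$ merely transforms it into a curl--curl system $\curl\left(\varepsilon^{-1}\curl w\right)\in L^{2}(\Omega)$ with $\varepsilon^{-1}\in W^{1,3+\delta}$, which is exactly as hard --- the classical treatments of such systems (Weber, Costabel) are precisely the Lipschitz-coefficient results you are trying to improve. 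So the proposal is circular at its core: the decomposition moves the difficulty into the divergence-free piece rather than resolving it.

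What the paper does at precisely this point is different in kind: it never treats the first order system as a whole, but derives, for each Cartesian component $E_{k}$, a \emph{scalar} second order equation $-\div\left(\varepsilon\nabla E_{k}\right)=\div(\cdots)$ in a very weak (dual) sense (Proposition~\ref{prop:coupled_system}); it then upgrades very weak solutions to $W^{1,r}$ solutions by testing against solutions of an adjoint Dirichlet or Neumann problem and invoking the $W^{1,q}$ theory of Auscher--Qafsaoui for divergence-form operators with $W^{1,3}\subset$ VMO coefficients (Lemma~\ref{lem:very weak_boundary}), recovering the normal component at a flattened boundary from the constraint $\div(\varepsilon E)\in L^{r}$ rather than from a boundary condition (Proposition~\ref{prop:bootstrap step for E}); finally a bootstrap in the integrability exponent reaches $H^{1}$. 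If you want to rescue your decomposition, you would have to prove your final claim by exactly such a componentwise duality argument --- at which point the scalar potential $\phi$ becomes superfluous.
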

Note that no regularity assumption is made on $\mu$, apart from \eqref{eq:Hyp-Ellip}.
Our second result is devoted to the $H^{1}(\Omega)$ regularity of
$H$. 
\begin{thm}
\label{thm:H1 for H} Assume that \eqref{eq:Hyp-Ellip} holds, and
that $\mu$ also satisfies 
\begin{equation}
\mu\in W^{1,3+\delta}\left(\Omega;\mathbb{C}^{3\times3}\right)\mbox{ for some }\delta>0.\label{eq:Hyp2-mu}
\end{equation}
 Suppose that the source terms $J_{e},\, J_{m}$ and $G$ satisfy
\begin{eqnarray}
 & J_{e}\in L^{p}\left(\Omega;\C^{3}\right),\, J_{m}\in W^{1,p}\left(\div,\Omega\right),\, J_{m}\cdot\nu\in W^{1-\frac{1}{p},p}\left(\partial\Omega;\C\right)\nonumber \\[-1.5ex]
\label{eq:reg-H}\\[-1.5ex]
 & \mbox{ and }G\in W^{1,p}\left(\Omega;\mathbb{C}^{3}\right),\nonumber 
\end{eqnarray}
 for some $p\geq2.$ If $E,H\in H\left(\curl,\Omega\right)$ are weak
solutions of \eqref{eq:maxwell}, then $H\in H^{1}\left(\Omega\right)$
and
\begin{multline*}
\left\Vert H\right\Vert _{H^{1}\left(\Omega\right)}\le C\left(\left\Vert H\right\Vert _{H\left(\curl,\Omega\right)}+\left\Vert G\right\Vert _{H^{1}\left(\Omega\right)}\right.\\
+\left.\left\Vert J_{e}\right\Vert _{L^{2}\left(\Omega\right)}+\left\Vert J_{m}\right\Vert _{\Hdiv}+\left\Vert J_{m}\cdot\nu\right\Vert _{H^{1/2}(\partial\Omega;\C)}\right),
\end{multline*}
for some constant $C$ depending on $\Omega$, $\Lambda$ given in
\eqref{eq:Hyp-Ellip}, $\k$ and $\left\Vert \mu\right\Vert _{W^{1,3+\delta}\left(\Omega;\mathbb{C}^{3\times3}\right)}$
only. 
\end{thm}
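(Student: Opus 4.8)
The plan is to mirror the proof of Theorem \ref{thm:H1 for E} under the formal duality $(E,H,\varepsilon,\mu,J_e,J_m)\mapsto(H,-E,\mu,\varepsilon,-J_m,J_e)$, which exchanges the two equations in \eqref{eq:maxwell} and puts the regular coefficient $\mu$ in the role previously played by $\varepsilon$. The only feature not preserved by this duality is the boundary condition: whereas the datum for $E$ is the tangential trace $E\times\nu=G\times\nu$, the corresponding datum for $H$ is a condition on the \emph{normal} component, which must be extracted from the system. Accordingly I would reduce the problem to a scalar second order elliptic equation with coefficient $\mu$ and a Neumann-type boundary condition, and then invoke the same elliptic regularity theory used for Theorem \ref{thm:H1 for E}.

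First I would record the two interior ingredients. From the first equation in \eqref{eq:maxwell}, $\curl H=\ii\k\varepsilon E+J_e\in L^2(\Omega)$, and the ellipticity \eqref{eq:Hyp-Ellip} gives $\|E\|_{L^2(\Omega)}\le C(\|H\|_{\Hcurl}+\|J_e\|_{L^2(\Omega)})$, so that $E$ may be eliminated from the final bound. Taking the divergence of the second equation and using $\div\curl E=0$ yields $\div(\mu H)=-\ii\k^{-1}\div J_m$, which lies in $L^p(\Omega)\subseteq L^2(\Omega)$ by \eqref{eq:reg-H}. Thus $H$ has square-integrable curl and $\mu$-weighted divergence.

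The crux is the boundary condition. Taking the normal trace of the second equation on $\partial\Omega$ gives $(\mu H)\cdot\nu=\frac{\ii}{\k}\bigl((\curl E)\cdot\nu-J_m\cdot\nu\bigr)$. The surface quantity $(\curl E)\cdot\nu$ depends only on the tangential trace $E\times\nu$, hence equals the corresponding expression in $G\times\nu=E\times\nu$; this is what replaces, for $H$, the Dirichlet-type datum used for $E$. I would then solve the scalar problem $\div(\mu\nabla\psi)=\div(\mu H)$ in $\Omega$ with the Neumann datum $(\mu\nabla\psi)\cdot\nu=(\mu H)\cdot\nu$ (the compatibility condition holds by the divergence theorem), set $w=H-\nabla\psi$, and observe that $w$ is $\mu$-divergence free with vanishing weighted normal trace and $\curl w=\curl H\in L^2(\Omega)$. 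Since $\mu\in W^{1,3+\delta}(\Omega;\C^{3\times3})\hookrightarrow C^{0,\alpha}(\Omega)$ has coefficients of vanishing mean oscillation and $\partial\Omega$ is $\Cl^{1,1}$, the elliptic regularity theory yields $\nabla\psi\in H^1(\Omega)$ and the weighted Gaffney inequality for the normal boundary condition yields $w\in H^1(\Omega)$; together these give $H\in H^1(\Omega)$.

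I expect the boundary analysis to be the main obstacle. The datum $(\curl E)\cdot\nu$ is, a priori, only one tangential derivative of $E\times\nu=G\times\nu$ and therefore lies in $H^{-1/2}(\partial\Omega)$, one order short of what an $H^1$ conclusion requires. The resolution is to never differentiate $G$ on the boundary: in the energy identity underlying the regularity estimate the term carrying $(\curl E)\cdot\nu$ is paired against the trace of a test function and integrated by parts on $\partial\Omega$ (equivalently, rewritten as the interior integral of $\curl E=-\ii\k\mu H+J_m$ against a gradient), so that $G$ enters only through $\|G\|_{H^1(\Omega)}$. The genuinely boundary contribution is then $J_m\cdot\nu$, and it is precisely here that the hypothesis $J_m\cdot\nu\in W^{1-\frac1p,p}(\partial\Omega)\hookrightarrow H^{1/2}(\partial\Omega)$ is needed to close the Neumann estimate; assembling these bounds produces the stated inequality.
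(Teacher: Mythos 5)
Your reduction hinges on two regularity claims, and the second one is circular. The field $w=H-\nabla\psi$ satisfies $\curl w\in L^{2}(\Omega)$, $\div(\mu w)=0$ and $(\mu w)\cdot\nu=0$, and you conclude $w\in H^{1}(\Omega)$ by invoking ``the weighted Gaffney inequality for the normal boundary condition.'' No such inequality is available when the weight is merely $W^{1,3+\delta}$: the classical results of this type (Weber \cite{WEBER-1981}, Costabel \cite{COSTABEL-1991}, cited in the introduction) require Lipschitz, respectively $\Cl^{1}$, coefficients, and the statement ``$\curl w\in L^{2}$, $\div(\mu w)=0$, $(\mu w)\cdot\nu=0$ imply $w\in H^{1}$'' with $\mu\in W^{1,3+\delta}$ is essentially Theorem~\ref{thm:H1 for H} itself. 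Nor can one fall back on the unweighted Gaffney inequality applied to $v=\mu w$: the curl of $v$ picks up terms of the form $\nabla\mu\times w$, which lie only in $L^{r}$ with $\frac{1}{r}=\frac{1}{2}+\frac{1}{3+\delta}$, not in $L^{2}$. Your decomposition therefore does not reduce the difficulty; $w$ still carries simultaneously the $L^{2}$ curl and the rough-weighted divergence and normal-trace constraints, and proving your claimed estimate for $w$ would require exactly the machinery the paper builds for $H$: the componentwise very weak formulation (Proposition~\ref{prop:coupled_system}), the duality Lemma~\ref{lem:very weak_boundary} resting on $W^{1,q}$ estimates for $W^{1,3}$ coefficients \cite{AUSCHER-QAFSAOUI-2002}, the reduction to $J_{m}\cdot\nu=0$ via the extension $j_{m}h$, the identity $\mu H\cdot\nu=0$ from \eqref{eq:monk}, and the integrability bootstrap. (The ``transposed'' decomposition does work, but only with extra hypotheses: in \S~\ref{sec:Campanato} the paper writes $H=T+\nabla h$ with $\curl T=\ii\k\varepsilon E+J_{e}$, where $T\in H^{1}(\Omega)$ comes from classical, constant-coefficient vector potential theory, so that the rough coefficient enters only through a scalar elliptic equation; this requires $\Omega$ simply connected, which Theorem~\ref{thm:H1 for H} does not assume.)

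The first claim, $\nabla\psi\in H^{1}(\Omega)$, is also not established by your argument. To obtain $\psi\in H^{2}(\Omega)$ one needs the Neumann datum $(\mu H)\cdot\nu=(\ii\k)^{-1}\bigl(J_{m}\cdot\nu-(\curl E)\cdot\nu\bigr)$ in $H^{1/2}(\partial\Omega;\C)$, and, as you yourself observe, $(\curl E)\cdot\nu=\div_{\partial\Omega}(G\times\nu)$ is in general only in $H^{-1/2}(\partial\Omega;\C)$ when $G\in H^{1}(\Omega;\C^{3})$. Your proposed fix --- pairing this term against test functions and integrating by parts so that only $\left\Vert G\right\Vert _{H^{1}(\Omega)}$ appears --- controls the \emph{weak} formulation of the Neumann problem, which yields only $\psi\in H^{1}(\Omega)$ and hence no information on $H$ beyond $H\in L^{2}$; it does not produce the $H^{1/2}$ trace bound that the $H^{2}$ estimate needs. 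Handling boundary data of negative order is precisely why the paper works with very weak formulations tested against $W^{2,2}$ functions, instead of with strong regularity of an auxiliary potential, and why it removes the normal flux beforehand (the $j_{m}h$ correction of $H$). A further, more minor, point: even with smooth data, $H^{2}$ regularity for the Neumann problem with complex $W^{1,3+\delta}$ coefficients is itself not off-the-shelf; it needs the $W^{1,q}$ estimates of \cite{AUSCHER-QAFSAOUI-2002} followed by a difference-quotient argument, as in the proof of Lemma~\ref{lem:very weak_boundary}.
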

Naturally, interior regularity for $H$ follows from the interior
regularity of $E$, due to the (almost) symmetrical role of the pairs
$(E,\varepsilon)$ and $(H,\mu)$ in Maxwell's equations. The difference
between Theorem~\ref{thm:H1 for E} and Theorem~\ref{thm:H1 for H}
comes from the fact that \eqref{eq:maxwell} involves a boundary condition
on $E$, not on $H$. Combining both results, we then show that when
$\varepsilon$ and $\mu$ are both $W^{1,3+\delta}$ with $\delta>0$,
then $E$ and $H$ enjoy the regularity inherited from the source
terms, up to $W^{1,3+\delta}$. 
\begin{thm}
\label{thm:W^1,p for E and H} Suppose that the hypotheses of Theorems~\ref{thm:H1 for E}
and \ref{thm:H1 for H} hold.

If $E$ and $H$ in $H\left(\curl,\Omega\right)$ are weak solutions
of \eqref{eq:maxwell}, then $E,H\in W^{1,q}\left(\Omega;\mathbb{C}^{3}\right)$
with $q=\min\left(p,3+\delta\right)$ and
\begin{multline*}
\left\Vert E\right\Vert _{W^{1,q}\left(\Omega;\mathbb{C}^{3}\right)}+\left\Vert H\right\Vert _{W^{1,q}\left(\Omega;\mathbb{C}^{3}\right)}\le C\Bigl(\left\Vert E\right\Vert _{L^{2}\left(\Omega\right)}+\left\Vert H\right\Vert _{L^{2}\left(\Omega\right)}+\left\Vert G\right\Vert _{W^{1,p}\left(\Omega;\mathbb{C}^{3}\right)}\Bigr.\\
+\Bigl.\left\Vert J_{e}\right\Vert _{W^{1,p}\left(\div,\Omega\right)}+\left\Vert J_{m}\right\Vert _{W^{1,p}\left(\div,\Omega\right)}+\left\Vert J_{m}\cdot\nu\right\Vert _{W^{1-\frac{1}{p},p}(\partial\Omega;\C)}\Bigr),
\end{multline*}
for some $C>0$ depending on $\Omega$, $\Lambda$, $\k$, $q$, $\left\Vert \varepsilon\right\Vert _{W^{1,3+\delta}\left(\Omega;\mathbb{C}^{3\times3}\right)}$
and $\left\Vert \mu\right\Vert _{W^{1,3+\delta}\left(\Omega;\mathbb{C}^{3\times3}\right)}$
only. In particular, if $p>3,$ then $E,H\in\Cl^{0,\alpha}\left(\overline{\Omega};\mathbb{C}^{3}\right)$
with $\alpha=\min\bigl(1-\frac{3}{p},\frac{\delta}{3+\delta}\bigr)$. 
\end{thm}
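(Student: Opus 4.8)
The plan is to bootstrap from the $H^1$ regularity already established in Theorems~\ref{thm:H1 for E} and~\ref{thm:H1 for H} to the full $W^{1,q}$ estimate, and this requires recasting each field as the solution of a scalar second-order elliptic equation (or an elliptic system with good structure) to which $L^p$ regularity theory applies. Concretely, from the first Maxwell equation $\curl H=\ii\k\varepsilon E+J_e$, if I additionally control $\div(\varepsilon E)$, then $E$ satisfies a div--curl system; the classical identity $-\Delta E = \curl\curl E - \nabla\div E$ lets me recover second derivatives of $E$ from first derivatives of $\curl E$ and $\div E$. Taking the divergence of the first equation gives $\ii\k\,\div(\varepsilon E)=-\div J_e$, so $\div(\varepsilon E)$ is controlled in $W^{1,p}$ by $J_e\in W^{1,p}(\div,\Omega)$. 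The field $H$ is handled symmetrically via the second equation, with the extra boundary data $J_m\cdot\nu$ accounting for the normal trace of $\div(\mu H)$, exactly as reflected in the statement's norms.

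First I would set $q=\min(p,3+\delta)$ and note that by the assumptions $\varepsilon,\mu\in W^{1,3+\delta}$, with $3+\delta>3$, the Sobolev embedding $W^{1,3+\delta}\hookrightarrow C^{0,\alpha}$ holds, so the coefficients are continuous; this is the key structural gain over the merely-bounded setting. I would then write, for $E$, the elliptic boundary value problem whose principal part is $-\Delta$ (or $\div(\varepsilon\,\nabla\cdot)$ after reducing as in the scalar reduction sketched in the introduction) and whose right-hand side collects $\curl(\ii\k\mu H+J_m)$ type terms and $\div J_e$ type terms. Since Theorem~\ref{thm:H1 for E} already gives $E\in H^1$ and Theorem~\ref{thm:H1 for H} gives $H\in H^1$, the right-hand sides of these equations are controlled in $L^2$, and more precisely in $W^{-1,p}$ or $L^p$ once the product rule $\nabla(\varepsilon E)=(\nabla\varepsilon)E+\varepsilon\nabla E$ is used together with the embedding $H^1\hookrightarrow L^6$ in three dimensions, which ensures $(\nabla\varepsilon)E\in L^{\min(3+\delta, \cdot)}$. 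Standard $W^{1,q}$ elliptic estimates on the $\Cl^{1,1}$ domain (the Agmon--Douglis--Nirenberg / Calder\'on--Zygmund theory) then upgrade $E$ and $H$ from $H^1$ to $W^{1,q}$, with the stated dependence of the constant on $\Omega,\Lambda,\k,q$ and the $W^{1,3+\delta}$ norms of $\varepsilon,\mu$.

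The main obstacle I expect is bookkeeping the integrability exponents through the product terms so that the bootstrap genuinely reaches $q=\min(p,3+\delta)$ rather than stalling at some intermediate exponent. The term $(\nabla\varepsilon)\,E$ is the delicate one: with $\nabla\varepsilon\in L^{3+\delta}$ and $E\in H^1\hookrightarrow L^6$, H\"older gives this product in $L^{r}$ with $1/r=1/(3+\delta)+1/6$, which is strictly less than $q$, so a single application of elliptic regularity does not close the loop. The remedy is an iteration: each round of elliptic regularity improves the integrability of $E$ (hence of $\nabla E$, and by embedding of $E$ itself), which in turn improves the integrability of $(\nabla\varepsilon)E$, and one verifies that after finitely many steps the exponent saturates at $q$. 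I would present this as a finite induction on a sequence of exponents $q_0=2<q_1<\dots<q_k=q$, checking at each stage that the source terms $J_e,J_m$ and the coefficient products lie in the space dictating the next exponent. Once $E,H\in W^{1,q}$ with $q>3$ (which holds precisely when $p>3$, since then $q=\min(p,3+\delta)>3$), the Morrey embedding $W^{1,q}\hookrightarrow C^{0,\alpha}$ with $\alpha=1-3/q$ yields H\"older continuity; writing out $1-3/q=\min(1-3/p,\delta/(3+\delta))$ gives the stated exponent $\alpha$, completing the proof.
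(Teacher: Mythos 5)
Your overall skeleton --- componentwise second-order elliptic equations whose right-hand sides contain the troublesome product $(\nabla\varepsilon)E$, a finite bootstrap on integrability exponents, and a final Morrey embedding with the computation $1-3/q=\min(1-3/p,\delta/(3+\delta))$ --- is exactly the paper's strategy (Propositions~\ref{prop:bootstrap step for E} and \ref{prop:bootstrap step for H}, iterated as in the proof of Theorem~\ref{thm:W^1,p for E and H}). However, two of your steps have genuine gaps. The first concerns the elliptic structure. The route through $-\Delta E=\curl\curl E-\nabla\div E$ does not work here: taking the divergence of the first Maxwell equation controls $\div(\varepsilon E)$, \emph{not} $\div E$, and for non-constant anisotropic $\varepsilon$ you cannot recover $\div E$ from $\div(\varepsilon E)$ --- the relation between the two involves the full gradient $\nabla(\varepsilon E)=(\nabla\varepsilon)E+\varepsilon\nabla E$, so the argument is circular. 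The reduction that actually works is the one derived in Proposition~\ref{prop:coupled_system}: each component satisfies $-\div\left(\varepsilon\nabla E_{k}\right)=\div\left(\left(\partial_{k}\varepsilon\right)E-\varepsilon\left(\mathbf{e}_{k}\times\left(J_{m}-\ii\k\mu H\right)\right)-\ik\mathbf{e}_{k}\div J_{e}\right)$, whose derivation uses both Maxwell equations in a nontrivial way and is not the two-dimensional scalar reduction sketched in the introduction; your proposal gestures at ``$\div(\varepsilon\nabla\cdot)$'' as an alternative but never produces these equations.

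The second, and more serious, gap is the boundary. The theorem is a global estimate, and ``standard ADN/Calder\'on--Zygmund estimates on the $\Cl^{1,1}$ domain'' cannot simply be invoked, because the component equations come with no classical boundary conditions. After reducing to $G=0$ and flattening the boundary, only the \emph{tangential} components of $E$ have Dirichlet data; the normal component $E_{3}$ has none, and the paper obtains its boundary regularity from the Maxwell structure itself: tangential derivatives of $E_3$ are controlled because $\curl E\in L^{r}$, and the normal derivative is then recovered from $\div(\varepsilon E)\in L^{r}$ together with the ellipticity bound on $\mathbf{e}_{3}\cdot(\varepsilon\mathbf{e}_{3})$. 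For $H$ the situation is not ``symmetric'' at all, since \eqref{eq:maxwell} imposes no boundary condition on $H$: the paper must first lift $J_{m}\cdot\nu$ to reduce to the case $J_{m}\cdot\nu=0$, then derive $\mu H\cdot\nu=0$ on $\partial\Omega$ from $E\times\nu=0$ via $\div_{\partial\Omega}(E\times\nu)=(\curl E)\cdot\nu$, and then run the argument with \emph{Neumann-type} test functions, estimating an extra boundary functional by trace theory; this is also where the norm $\left\Vert J_{m}\cdot\nu\right\Vert _{W^{1-\frac{1}{p},p}(\partial\Omega;\C)}$ enters. None of these ideas appears in your proposal, and without them your bootstrap only yields interior $W_{loc}^{1,q}$ regularity (essentially the content of the bi-anisotropic Theorem~\ref{thm:C 0 alpha -bi-anisotropic}), not the stated estimate up to $\overline{\Omega}$.
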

As an extension of this work, we show in \S~\ref{sec:Bi-anisotropic}
that, as far as interior regularity is concerned, the analogue of
Theorem \ref{thm:W^1,p for E and H} holds for more general constitutive
relations, for which Maxwell's equations read 
\begin{equation}
\left\{ \begin{array}{l}
\curl H=\ii\k\left(\varepsilon E+\xi H\right)+J_{e}\qquad\mbox{ in }\Omega,\\
\curl E=-\ii\k\left(\zeta E+\mu H\right)+J_{m}\qquad\mbox{ in }\Omega,
\end{array}\right.\label{eq:maxwell for bi-anisotropic}
\end{equation}
 provided that $\zeta,\xi\in L^{\infty}\left(\Omega;\mathbb{C}^{3\times3}\right)$
are small enough to preserve the underlying elliptic structure of
the system -- see condition \eqref{eq:strong legendre condition}.
These constitutive relations are commonly used to model the so called
bi-anisotropic materials.

Our approach is classical and fundamentally scalar. It is oblivious
of the fact that Maxwell's equations is posed on vectors, as we consider
the problem component per component, just like it is done in Leis
\cite{LEIS-1986}. A general $L^{p}$ theory for vector potentials
has been developed very recently by Amrouche \& Seloula \cite{AMROUCHE-SELOULA-2011,AMROUCHE-SELOULA-2013}.
Applying their results would lead to similar regularity results for
scalar coefficients. It seems our approaches are completely independent,
even though both are based on the $L^{p}$ theory for elliptic equations.

Finally, \S~\ref{sec:Campanato} is devoted to the case when only
one of the two coefficients is complex-valued. We consider the case
when $\varepsilon\in W^{1,3+\delta}\left(\Omega;\mathbb{C}^{3\times3}\right)$,
with $\delta>0$, and $\mu\in L^{\infty}(\Omega,\mathbb{R}^{3\times3})$.
In that situation, a Helmholtz decomposition of the magnetic field
into $H=T+\nabla h$, where $T\in H^{1}\left(\Omega\right)$ is divergence
free, provides additional insight on the regularity of $H$. Indeed,
the potential $h$ then satisfies a real scalar second order elliptic
equation, and therefore enjoys additional regularity properties. 
\begin{thm}
\label{thm:global campanato for E}Suppose that the hypotheses of
Theorem~\ref{thm:H1 for E} hold for some $p>3$. Assume additionally
that $\Omega$ is simply connected and that $\Im{\mu}=0$.

If $E$ and $H$ in $H\left(\curl,\Omega\right)$ are weak solutions
of \eqref{eq:maxwell}, then there exists $0<\alpha\leq\min(1-\frac{3}{p},\frac{\delta}{3+\delta})$
depending only on $\Omega$ and $\Lambda$ given in \eqref{eq:Hyp-Ellip}
such that $E\in\Cl^{0,\alpha}(\overline{\Omega};\C^{3})$ with 
\[
\left\Vert E\right\Vert _{\Cl^{0,\alpha}(\overline{\Omega};\C^{3})}\le C\bigl(\left\Vert E\right\Vert _{L^{2}\left(\Omega\right)}+\left\Vert G\right\Vert _{W^{1,p}\left(\Omega;\mathbb{C}^{3}\right)}+\left\Vert J_{e}\right\Vert _{W^{1,p}\left(\div,\Omega\right)}+\left\Vert J_{m}\right\Vert _{L^{p}(\Omega;\C^{3})}\bigr),
\]
 for some constant $C$ depending on $\Omega$, $\Lambda$, $\k$
and $\left\Vert \varepsilon\right\Vert _{W^{1,3+\delta}\left(\Omega;\mathbb{C}^{3\times3}\right)}$
only. 
\end{thm}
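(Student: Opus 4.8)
The plan is to upgrade the $\Hone$ regularity of $E$ --- which Theorem~\ref{thm:H1 for E} already grants under the present hypotheses --- to H\"older continuity by a Campanato (energy-decay) argument, the decisive new ingredient being that the reality of $\mu$ unlocks De Giorgi--Nash--Moser theory for a scalar potential. First I would record that, since $E\in\Hone$ and $\varepsilon\in W^{1,3+\delta}\hookrightarrow L^{\infty}$, the first equation in \eqref{eq:maxwell} gives $\curl H=\ii\k\varepsilon E+J_{e}\in L^{s}(\Omega;\C^{3})$ with $s=\min(6,p)>3$, using $E\in L^{6}$ in dimension three and $J_{e}\in L^{p}$.

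Next I would introduce the Helmholtz decomposition $H=T+\nabla h$ with $\div T=0$, adapted to the boundary so that $T$ carries an admissible boundary condition; simple connectedness guarantees there is no harmonic (cohomology) remainder. Since $\curl T=\curl H\in L^{s}$, $\div T=0$ and $s>3$, $L^{s}$ regularity for the div--curl system on the $\Cl^{1,1}$ domain yields $T\in W^{1,s}(\Omega;\C^{3})\hookrightarrow\Cl^{0,1-3/s}(\overline{\Omega};\C^{3})$, in particular $T\in L^{\infty}$. Taking the divergence of the second equation in \eqref{eq:maxwell} and using $\div\curl=0$ gives $\div(\mu H)=\tfrac{1}{\ii\k}\div J_{m}$, so the potential $h$ solves the \emph{real} scalar elliptic equation
\[
\div\!\left(\mu\nabla h\right)=\div F,\qquad F:=\tfrac{1}{\ii\k}J_{m}-\mu T,
\]
with $\mu$ real, bounded and uniformly elliptic by \eqref{eq:Hyp-Ellip}, together with the boundary condition inherited from the decomposition. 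This is the step where the hypothesis $\Im\mu=0$ is indispensable.

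I would then apply Campanato / De Giorgi--Nash--Moser regularity to $h$ up to the boundary. Since $J_{m}\in L^{p}$ and $\mu T\in L^{\infty}$, the datum satisfies $F\in L^{2,\tau}(\Omega)$ (Morrey) with $\tau=3-\tfrac{6}{p}>1$; combining the energy-decay estimate for the homogeneous real equation --- whose exponent $\alpha_{0}$ depends only on $\Omega$ and $\Lambda$ --- with the Morrey estimate for the inhomogeneity gives $\nabla h\in L^{2,\sigma}(\Omega;\C^{3})$ with $\sigma=\min(1+2\alpha_{0},\tau)$. Consequently $H=T+\nabla h$ lies in the same Morrey class, and from the second equation in \eqref{eq:maxwell}, $\curl E=-\ii\k\mu H+J_{m}\in L^{2,\sigma}(\Omega;\C^{3})$, while from the first equation $\div(\varepsilon E)=-\tfrac{1}{\ii\k}\div J_{e}\in L^{p}(\Omega;\C)$ because $J_{e}\in W^{1,p}(\div,\Omega)$.

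Finally I would run the Campanato estimate for $E$ itself: here $E$ solves a div--curl system with continuous leading coefficient $\varepsilon\in W^{1,3+\delta}\hookrightarrow\Cl^{0,\delta/(3+\delta)}$, data $\div(\varepsilon E)$ and $\curl E$ in the Morrey spaces above, and the tangential condition $E\times\nu=G\times\nu$ with $G\in W^{1,p}$. Freezing $\varepsilon$ at the centre of each ball and comparing $E$ with the solution of the constant-coefficient div--curl problem --- the oscillation of $\varepsilon$ being controlled by its H\"older seminorm --- yields the decay $\int_{B_{r}}|\nabla E|^{2}\lesssim r^{1+2\alpha}$, whence, by Campanato's characterisation, $E\in\Cl^{0,\alpha}(\overline{\Omega};\C^{3})$ with $\alpha=\min(\alpha_{0},1-\tfrac{3}{p},\tfrac{\delta}{3+\delta})\le\min(1-\tfrac{3}{p},\tfrac{\delta}{3+\delta})$, depending only on $\Omega$ and $\Lambda$, and the stated norm bound follows by tracking constants. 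I expect the main obstacle to be the genuinely \emph{up-to-the-boundary} Campanato estimates in the last two steps: flattening the $\Cl^{1,1}$ boundary while preserving the div--curl structure, choosing the boundary condition for $h$ compatibly with the Helmholtz decomposition, and correctly incorporating the tangential datum $G\times\nu$ into the frozen-coefficient comparison near $\partial\Omega$.
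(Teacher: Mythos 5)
Your overall architecture is right, and its first half is the paper's own argument: the decomposition $H=T+\nabla h$ with $\curl T=\ii\k\varepsilon E+J_{e}$, the observation that $h$ solves a \emph{real} scalar divergence-form Neumann problem with datum $-\mu T-\ii\k^{-1}J_{m}$, and the Campanato/De Giorgi--Nash estimate with exponent depending only on $\Omega$ and $\Lambda$ are precisely Proposition~\ref{prop:H campanato} combined with the first part of Proposition~\ref{prop:campanato_regularity}. Three remarks on this half. First, the paper only needs $T\in H^{1}(\Omega)\subset L^{2,2}(\Omega;\C^{3})$ (Girault--Raviart), so your claim $T\in W^{1,s}$, $s>3$, which requires the $L^{p}$ vector-potential theory of Amrouche--Seloula on $\Cl^{1,1}$ domains, is stronger than necessary. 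Second, under the present hypotheses \eqref{eq:reg-E} the field $J_{m}$ is only in $L^{p}$, so ``$\div(\mu H)=\frac{1}{\ii\k}\div J_{m}$'' must be read distributionally; your divergence-form equation $\div(\mu\nabla h)=\div F$ is the correct formulation. Third, the Neumann datum for $h$ is not simply ``inherited from the decomposition'': it comes from the boundary condition on $E$ through \eqref{eq:monk}, which requires reducing to $G=0$ first (consider $E-G$), a reduction you need but never perform.

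The second half is where you genuinely depart from the paper, and where the proposal has a gap. The paper never freezes coefficients in a div--curl system: it works componentwise, using Proposition~\ref{prop:coupled_system} to write $-\div(\varepsilon\nabla E_{k})=\div\left((\partial_{k}\varepsilon)E+S_{k}\right)$, applies the scalar Campanato estimate for Dirichlet problems with $W^{1,3+\delta}$ complex coefficients (second part of Proposition~\ref{prop:campanato_regularity}), and bootstraps (Proposition~\ref{prop:bootstrap step for E-campanato}) because the term $(\partial_{k}\varepsilon)E$ gains only $2\delta(3+\delta)^{-1}$ in the Campanato exponent per iteration, by Lemma~\ref{lem:campanato_properties}; this is also the source of the cap $\delta/(3+\delta)$ in the statement. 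Crucially, the boundary case is settled by the very device your sketch lacks: after flattening, the tangential components $\chi E_{1},\chi E_{2}$ satisfy scalar \emph{Dirichlet} problems, and the normal component is recovered \emph{algebraically} --- $\curl E\in L^{2,\lambda}$ forces $\nabla E_{3}$ to be parallel to $\mathbf{e}_{3}$ modulo $L^{2,\lambda}$, and then $\div(\varepsilon E)\in L^{2,\lambda}$ together with $\left|\mathbf{e}_{3}\cdot\varepsilon\mathbf{e}_{3}\right|\geq\Lambda$ yields $\nabla E_{3}\in L^{2,\lambda}$ --- so no boundary value problem for a div--curl system is ever solved. Your plan instead requires decay estimates up to the boundary for the frozen-coefficient div--curl problem with prescribed tangential trace. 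This is plausible (a curl-free local correction of $E$ is a gradient, and the tangential condition becomes a Dirichlet condition on its potential), but you explicitly leave it as ``the main obstacle'', and it is exactly where all the work of the theorem lies; as written, that step is missing, and filling it would either amount to reproducing the paper's componentwise reduction or to developing boundary Campanato theory for div--curl systems from scratch.
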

This is a generalization of the result proved by Yin~\cite{YIN-2004}
who assumed instead $\varepsilon\in W^{1,\infty}\left(\Omega;\mathbb{C}\right)$
and $\mu\in L^{\infty}\left(\Omega;\mathbb{R}\right)$: this is not
the minimal regularity requirement to prove Hölder continuity of the
electric field.

We do not claim that requiring that (one of) the parameters is in
$W^{1,3+\delta}$ for some $\delta>0$ is optimal. We are confident
that it is sufficient to assume that the derivatives are in the Campanato
space $L^{3,\lambda}$ with $\lambda>0$, for example. However, as
we do not know that these are necessary conditions, it seemed that
such a level of sophistication was unjustified in this work. Assuming
simply $W^{1,3}$ regularity (i.e., $\delta=\lambda=0$) does not
seem to work with our proof: the bootstrap argument we use stalls
in this case. A completely different approach would be required to
handle the case of coefficients with less than VMO regularity.

Our paper is structured as follows. Section~\ref{sec:Main-H1-Ca}
is devoted to the proof of Theorems~\ref{thm:H1 for E}, \ref{thm:H1 for H}
and \ref{thm:W^1,p for E and H}. At the end of \S~\ref{sec:Main-H1-Ca}
we prove Theorem~\ref{thm:higher regularity-new}, the $W^{N,p}$
counterpart of Theorem~\ref{thm:W^1,p for E and H}, with appropriately
smooth coefficients in a domain with $\Cl^{N,1}$ boundary. Section~\ref{sec:Bi-anisotropic}
is devoted to the statement of our result for the generalized bi-anisotropic
Maxwell's equations; the proof of this result is given in the appendix.
Section~\ref{sec:Campanato} focuses on the particular case when
$\mu$ is real-valued and is devoted to the proof of Theorem~\ref{thm:global campanato for E}.

\section{\label{sec:Main-H1-Ca} $W^{1,p}$ regularity for $E$ and $H$}

Our strategy is to consider a coupled elliptic system satisfied by
each component of the electric and magnetic field, where in each equation,
only one component appears in the leading order term. In a first step,
we show that the electric and magnetic fields are very weak solutions
of such a system. This system was already introduced, in its strong
form, in Leis \cite{LEIS-1986}, and was used recently in Nguyen \&
Wang \cite{NGUYEN-WANG-2012}. 
\begin{prop}
\label{prop:coupled_system} Assume that \eqref{eq:Hyp-Ellip} holds
true. Let $E=\left(E_{1},E_{2},E_{3}\right)^{T}$ and $H=\left(H_{1},H_{2},H_{3}\right)^{T}$
in $H\left(\curl,\Omega\right)$ be weak solutions of \eqref{eq:maxwell}. 
\begin{itemize}
\item If \eqref{eq:Hyp2-epsilon} and \eqref{eq:reg-E} hold, for each $k=1,2,3$,
$E_{k}$ is a very weak solution of 
\begin{equation}
-\div\left(\varepsilon\nabla E_{k}\right)=\div\left(\left(\partial_{k}\varepsilon\right)E-\varepsilon\left(\mathbf{e}_{k}\times\left(J_{m}-\ii\k\mu H\right)\right)-\ik\mathbf{e}_{k}\div J_{e}\right)\textrm{ in }\Omega,\label{eq:a}
\end{equation}
 where $\mathbf{e}_{k}$ is the unit vector in the $k$-th direction.
More precisely, $E_{k}$ satisfies for any $\varphi\in W^{2,2}(\Omega;\C)$
\begin{eqnarray}
 & \int_{\Omega}E_{k}\div\left(\tepsi\nabla\overline{\varphi}\right)\, dx=\int_{\partial\Omega}(\partial_{k}\overline{\varphi})\varepsilon E\cdot\nu\, d\sigma-\int_{\partial\Omega}\left(\mathbf{e}_{k}\times\left(E\times\nu\right)\right)\cdot(\tepsi\nabla\overline{\varphi})\, d\sigma\nonumber \\[-0.5ex]
\label{eq:very weak E}\\[-1.5ex]
 & +\int_{\Omega}\left((\partial_{k}\varepsilon)E-\varepsilon\left(\mathbf{e}_{k}\times\left(J_{m}-\ii\k\mu H\right)\right)-\ik\mathbf{e}_{k}\div J_{e}\right)\cdot\nabla\overline{\varphi}\, dx.\nonumber 
\end{eqnarray}

\item If \eqref{eq:Hyp2-mu} and \eqref{eq:reg-H} hold, for each\textup{
$k=1,2,3$, }$H_{k}$ is a very weak solution of 
\begin{equation}
-\div\left({\mu}\nabla H_{k}\right)=\div\left((\partial_{k}{\mu})H-{\mu}\left(\mathbf{e}_{k}\times\left(J_{e}+\ii\k\varepsilon E\right)\right)+\ik\mathbf{e}_{k}\div J_{m}\right)\textrm{ in }\Omega.\label{eq:b}
\end{equation}
 More precisely, $H_{k}$ satisfies for any $\varphi\in W^{2,2}(\Omega;\C)$
\begin{eqnarray}
 & \int_{\Omega}H_{k}\div\left({\mu}^{T}\nabla\overline{\varphi}\right)\, dx=\int_{\partial\Omega}(\partial_{k}\overline{\varphi}){\mu}H\cdot\nu\, d\sigma-\int_{\partial\Omega}\left(\mathbf{e}_{k}\times\left(H\times\nu\right)\right)\cdot({\mu}^{T}\nabla\overline{\varphi})\, d\sigma\nonumber \\[-0.5ex]
\label{eq:very weak H}\\[-1.5ex]
 & +\int_{\Omega}\left((\partial_{k}{\mu})H-{\mu}\left(\mathbf{e}_{k}\times\left(J_{e}+\ii\k\varepsilon E\right)\right)+\ik\mathbf{e}_{k}\div J_{m}\right)\cdot\nabla\overline{\varphi}\, dx.\nonumber 
\end{eqnarray}
 
\end{itemize}
\end{prop}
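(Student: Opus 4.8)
The plan is to establish the two structural relations that convert the first–order system \eqref{eq:maxwell} into a family of scalar second–order elliptic equations, and then to transcribe them into the stated very weak form using only the generalised Green's formulas for $\Hcurl$ and $\Hdiv$, so that no derivative is ever taken on $E$ (which lies only in $\Hcurl$). The starting point is the pointwise vector identity $\nabla E_{k}=\partial_{k}E+\mathbf{e}_{k}\times\curl E$; combining it with the second line of \eqref{eq:maxwell}, namely $\curl E=J_{m}-\ii\k\mu H$, with the product rule $\varepsilon\partial_{k}E=\partial_{k}(\varepsilon E)-(\partial_{k}\varepsilon)E$, and with the divergence constraint below, one formally recovers \eqref{eq:a}. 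This computation (essentially that of Leis) serves only to identify the right–hand side $F:=(\partial_{k}\varepsilon)E-\varepsilon(\mathbf{e}_{k}\times(J_{m}-\ii\k\mu H))-\ik\mathbf{e}_{k}\div J_{e}$; the real content of the proposition is that the identity survives at the regularity $E,H\in\Hcurl$ in the very weak sense.

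The first genuine ingredient is the divergence constraint. Taking the distributional divergence of $\curl H=\ii\k\varepsilon E+J_{e}$ and using $\div\curl H=0$ yields $\div(\varepsilon E)=\ik\div J_{e}$ in $L^{2}(\Omega)$; since $J_{e}\in W^{1,p}(\div,\Omega)$ this shows $\varepsilon E\in\Hdiv$, so that the normal trace $\varepsilon E\cdot\nu$ belongs to $H^{-1/2}(\partial\Omega)$ and may be paired with $\partial_{k}\overline{\varphi}\in H^{1/2}(\partial\Omega)$. I would also check here that every term of \eqref{eq:very weak E} is meaningful: writing $w:=\tepsi\nabla\overline{\varphi}$, the hypotheses $\varepsilon\in W^{1,3+\delta}$ and $\varphi\in W^{2,2}(\Omega)$ give $w\in H^{1}(\Omega)$ — the borderline product $(\nabla\varepsilon)\nabla\overline{\varphi}$ lies in $L^{2}(\Omega)$ precisely because $\delta>0$ — whence $\div w\in L^{2}(\Omega)$ and $w$ has a tangential trace in $H^{1/2}(\partial\Omega)$.

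For the identity itself I would argue backwards from the right–hand side of \eqref{eq:very weak E}, so as never to differentiate $E$. The $\Hdiv$ Green formula applied to $\int_{\partial\Omega}(\partial_{k}\overline{\varphi})\,\varepsilon E\cdot\nu$, together with $\div(\varepsilon E)=\ik\div J_{e}$, produces a $\div J_{e}$ term that cancels the corresponding contribution inside $\int_{\Omega}F\cdot\nabla\overline{\varphi}$. After replacing $\curl E=J_{m}-\ii\k\mu H$ in the rest of $F$, the volume terms collapse — via the product rule $\partial_{k}w=(\partial_{k}\tepsi)\nabla\overline{\varphi}+\tepsi\nabla\partial_{k}\overline{\varphi}$ — into $\int_{\Omega}E\cdot\partial_{k}w-\int_{\Omega}\curl E\cdot(w\times\mathbf{e}_{k})$. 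Applying the $\Hcurl$ Green formula to the last integral with the test field $w\times\mathbf{e}_{k}\in H^{1}(\Omega)$ and using $\curl(w\times\mathbf{e}_{k})=\partial_{k}w-\mathbf{e}_{k}\,\div w$, the interior part reproduces exactly the left–hand side $\int_{\Omega}E_{k}\,\div w$, leaving a single tangential–trace pairing of $E$ against $w\times\mathbf{e}_{k}$.

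The step I expect to be most delicate is matching this boundary pairing with the term $-\int_{\partial\Omega}(\mathbf{e}_{k}\times(E\times\nu))\cdot w$ of \eqref{eq:very weak E}. This is a purely algebraic check resting on the scalar triple product and on $\mathbf{e}_{k}\times(E\times\nu)=E\,\nu_{k}-\nu\,E_{k}$ and $E\times(w\times\mathbf{e}_{k})=w\,E_{k}-\mathbf{e}_{k}(E\cdot w)$, both sides reducing to $\int_{\partial\Omega}\bigl((w\cdot\nu)E_{k}-\nu_{k}(E\cdot w)\bigr)\,d\sigma$; the signs emerging from the two Green formulas must be tracked with care. Since every manipulation is either a generalised Green formula or a pointwise vector identity, and $E$ enters only through $E$, $\curl E$, its tangential trace and $\varepsilon E\cdot\nu$, the argument is valid directly for $E,H\in\Hcurl$, with no smoothing or density step. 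Finally, \eqref{eq:very weak H} follows verbatim after the interchange $(E,\varepsilon,J_{e})\leftrightarrow(H,\mu,J_{m})$; the only difference is the sign of the divergence constraint, which now reads $\div(\mu H)=-\ik\div J_{m}$ and explains the $+\ik\mathbf{e}_{k}\div J_{m}$ appearing in \eqref{eq:b}.
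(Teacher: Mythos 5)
Your proposal is correct and is in substance the same proof as the paper's: both establish \eqref{eq:very weak E} by pairing the two equations of \eqref{eq:maxwell} with fields built from $w=\tepsi\nabla\overline{\varphi}$, $\varphi\in W^{2,2}(\Omega;\C)$, using only the generalised Green formulas for $\Hcurl$ and $\Hdiv$ (so that no derivative ever falls on $E$ or $H$), and both dispose of \eqref{eq:very weak H} by the symmetry of the system. The differences are purely organizational: the paper argues forward, testing Faraday's law component-wise against $g\mathbf{e}_{l}$ with $g=(\tepsi\nabla\overline{\varphi})_{i}$ and Amp\`ere's law against $\ik\nabla(\partial_{k}\overline{\varphi})$ --- which encodes your divergence constraint $\div(\varepsilon E)=\ik\div J_{e}$ implicitly through the normal trace of $\curl H$ --- whereas you verify the identity backward with the single vector test field $w\times\mathbf{e}_{k}$ and the identity $\curl(w\times\mathbf{e}_{k})=\partial_{k}w-\mathbf{e}_{k}\,\div w$, the boundary algebra matching in either formulation.
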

\begin{proof}
We detail the derivation of \eqref{eq:very weak E} for the sake of
completeness. The derivation of \eqref{eq:very weak H} is similar,
thanks to the intrinsic symmetry of Maxwell's equations \eqref{eq:maxwell}.

We multiply the identity $\curl E=-\ii\k{\mu}H+J_{m}$ by $\overline{\Phi}=\overline{g}\mathbf{e}_{l}$
for some $g\in W^{1,2}(\Omega;\C)$, integrate by parts and multiply
the result by $\mathbf{e}_{l}$. We obtain 
\[
\mathbf{e}_{l}\int_{\Omega}\overline{g}\left(-\ii\k\mu H+J_{m}\right)\cdot\mathbf{e}_{l}\, dx=\mathbf{e}_{l}\int_{\Omega}E\cdot(\nabla\times\overline{\Phi})\, dx-\mathbf{e}_{l}\int_{\partial\Omega}\left(E\times\nu\right)\cdot\overline{\Phi}\, d\sigma,
\]
 which can be written also as 
\[
\int_{\Omega}\overline{g}\left(-\ii\k\mu H+J_{m}\right)\, dx+\int_{\partial\Omega}\overline{g}\left(E\times\nu\right)\, d\sigma=\int_{\Omega}E\times\nabla\overline{g}\, dx.
\]
 Note that since $E\in\Hcurl$ by assumption, $E\times\nu$ is well
defined in $H^{-\frac{1}{2}}\left(\partial\Omega;\mathbb{C}^{3}\right)$
and this formulation is valid. Next, we cross product this identity
with $\mathbf{e}_{k}$, and take the scalar product with $\mathbf{e}_{i}$.
Using the vector identity $a\times\left(b\times c\right)=\left(a\cdot c\right)b-\left(a\cdot b\right)c$
on the right-hand side, we obtain 
\begin{eqnarray}
 & \mathbf{e}_{i}\cdot\int_{\Omega}\overline{g}\mathbf{e}_{k}\times\left(-\ii\k\mu H+J_{m}\right)\, dx+\mathbf{e}_{i}\cdot\int_{\partial\Omega}\overline{g}\mathbf{e}_{k}\times\left(E\times\nu\right)\, d\sigma\nonumber \\[-1.5ex]
\label{eq:H-1b}\\[-1.5ex]
 & =\int_{\Omega}E_{i}\partial_{k}\overline{g}-E_{k}\partial_{i}\overline{g}\, dx,\nonumber 
\end{eqnarray}
 for any $i$ and $k$ in $\{1,2,3\}$ and $g\in W^{1,2}(\Omega;\C)$.
In view of \eqref{eq:Hyp2-epsilon}, we have that $\overline{\varepsilon}^{T}\nabla\varphi\in H^{1}(\Omega)$
for any $\varphi\in W^{2,2}(\Omega;\C)$. Thus, applying \eqref{eq:H-1b}
with $g=\left(\overline{\varepsilon}^{T}\nabla\varphi\right)_{i}$
for any $i=1,2,3$ and $\varphi\in W^{2,2}(\Omega;\C)$ we find that
\begin{multline*}
\int_{\Omega}E_{i}\,\partial_{k}\left(\tepsi\nabla\overline{\varphi}\right)_{i}\, dx=\int_{\Omega}E_{k}\,\partial_{i}\left(\tepsi\nabla\overline{\varphi}\right)_{i}\, dx+\mathbf{e}_{i}\cdot\int_{\partial\Omega}\left(\tepsi\nabla\overline{\varphi}\right)_{i}\mathbf{e}_{k}\times\left(E\times\nu\right)\, d\sigma\\
+\mathbf{e}_{i}\cdot\int_{\Omega}\left(\tepsi\nabla\overline{\varphi}\right)_{i}\mathbf{e}_{k}\times\left(-\ii\k\mu H+J_{m}\right)\, dx.
\end{multline*}
Summing over $i$, this yields
\begin{multline}
\int_{\Omega}E\cdot\partial_{k}\left(\tepsi\nabla\overline{\varphi}\right)\, dx=\int_{\Omega}E_{k}\,\div\left(\tepsi\nabla\overline{\varphi}\right)\, dx\\
+\int_{\partial\Omega}\left(\mathbf{e}_{k}\times\left(E\times\nu\right)\right)\cdot(\tepsi\nabla\overline{\varphi})\, d\sigma+\int_{\Omega}\varepsilon\left(\mathbf{e}_{k}\times\left(-\ii\k\mu H+J_{m}\right)\right)\cdot\nabla\overline{\varphi}\, dx.\label{eq:H-1}
\end{multline}

We then use the second part of Maxwell's equations. We test $\curl H-J_{e}=i\k\varepsilon E$
against $\mbox{\ensuremath{\nabla}}\left(\partial_{k}\overline{\varphi}\right)\frac{1}{\ii\k}$
for $\varphi\in W^{2,2}(\Omega;\C)$ and obtain
\[
\begin{split}\int_{\Omega}\varepsilon E\cdot\partial_{k} & \left(\nabla\overline{\varphi}\right)\, dx=-\ik\int_{\Omega}\curl(H)\cdot\nabla\left(\partial_{k}\overline{\varphi}\right)\, dx+\ik\int_{\Omega}J_{e}\cdot\nabla\left(\partial_{k}\overline{\varphi}\right)\, dx\\
 & =-\ik\!\left(\int_{\partial\Omega}\!(\partial_{k}\overline{\varphi})\curl H\cdot\nu\, d\sigma-\int_{\partial\Omega}\!\left(\partial_{k}\overline{\varphi}\right)J_{e}\cdot\nu\, d\sigma+\!\int_{\Omega}\div J_{e}\partial_{k}\overline{\varphi}\, dx\right)\\
 & =-\ik\left(\ii\k\int_{\partial\Omega}(\partial_{k}\overline{\varphi})\varepsilon E\cdot\nu\, d\sigma+\int_{\Omega}\div J_{e}\partial_{k}\overline{\varphi}\, dx\right).
\end{split}
\]
Since $J_{e}\in H\left(\mbox{div},\Omega\right)$, the boundary term
is well defined. Writing the left-hand side of the above identity
in the form 
\[
\int_{\Omega}\varepsilon E\cdot\partial_{k}\left(\nabla\overline{\varphi}\right)\, dx=\int_{\Omega}E\cdot\partial_{k}\left(\tepsi\nabla\overline{\varphi}\right)\, dx-\int_{\Omega}\left(\partial_{k}\varepsilon\right)E\cdot\nabla\overline{\varphi}\, dx,
\]
 we obtain 
\[
-\int_{\Omega}(\partial_{k}\varepsilon)E\cdot\nabla\overline{\varphi}\, dx+\int_{\Omega}E\cdot\partial_{k}\left(\tepsi\nabla\overline{\varphi}\right)\, dx=\int_{\partial\Omega}(\partial_{k}\overline{\varphi})\varepsilon E\cdot\nu\, d\sigma-\ik\int_{\Omega}\div J_{e}\partial_{k}\overline{\varphi}\, dx
\]
 Inserting this identity in \eqref{eq:H-1} we obtain \eqref{eq:very weak E}. 
\end{proof}
To transform the very weak identities given by Proposition~\ref{prop:coupled_system}
into regular weak formulations, we shall use the following lemma.
Given $r\in(1,\infty)$, we write $r'$ the solution of $\frac{1}{r}+\frac{1}{r'}=1$. 
\begin{lem}
\label{lem:very weak_boundary}Assume that \eqref{eq:Hyp-Ellip} and
\eqref{eq:Hyp2-epsilon} hold. Given $r\geq6/5$, $u\in L^{2}(\Omega;\C)\cap L^{r}(\Omega;\C)$,
$F\in(W^{1,r'}(\Omega;\C))'$, let $B$ be the trace operator given
either by $B\varphi=\varphi$ on $\partial\Omega$ or by $B\varphi=\tepsi\nabla\overline{\varphi}\cdot\nu$
on $\partial\Omega$ for $\varphi\in W^{2,2}(\Omega;\C)$.

If for all $\varphi\in W^{2,2}(\Omega;\C)$ such that $B\varphi=0$
there holds 
\begin{equation}
\int_{\Omega}u\,\div(\tepsi\nabla\overline{\varphi})\, dx=\langle F,\varphi\rangle,\label{eq:weak-a}
\end{equation}
 then $u\in W^{1,r}(\Omega;\C)$ and 
\begin{equation}
\left\Vert \nabla u\right\Vert _{L^{r}(\Omega;\C^{3})}\le C\left\Vert F\right\Vert _{(W^{1,r'}(\Omega;\C))'},\label{eq:bound norm grad u}
\end{equation}
 for some constant $C$ depending on $\Omega$, $\Lambda$ given in
\eqref{eq:Hyp-Ellip}, $\left\Vert \varepsilon\right\Vert _{W^{1,3}\left(\Omega;\mathbb{C}^{3\times3}\right)}$
and $r$ only. \end{lem}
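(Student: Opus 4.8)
The plan is to prove the lemma by transposition: construct a genuine $W^{1,r}$ solution $w$ of the corresponding forward problem, check that it satisfies the very weak identity \eqref{eq:weak-a}, and then identify $u=w$ by solving the adjoint problem. \emph{Step 1 (forward problem).} First I would solve, according to the nature of $B$, either the homogeneous Dirichlet problem (when $B\varphi=\varphi$) or the conormal problem (when $B\varphi=\varepsilon^{T}\nabla\overline{\varphi}\cdot\nu$) for the operator $\div(\varepsilon\nabla\,\cdot\,)$ with datum $F\in(W^{1,r'}(\Omega;\C))'$. Since $\varepsilon\in W^{1,3+\delta}(\Omega)\hookrightarrow C^{0}(\overline{\Omega})$ in dimension three while the weaker embedding $W^{1,3}\hookrightarrow\mathrm{VMO}$ controls the vanishing mean oscillation modulus by $\|\varepsilon\|_{W^{1,3}}$, and $\Omega$ has $C^{1,1}$ boundary, the Calderón--Zygmund $L^{r}$ theory for divergence-form elliptic equations with continuous coefficients satisfying the ellipticity in \eqref{eq:Hyp-Ellip} yields $w\in W^{1,r}(\Omega;\C)$ with $\|\nabla w\|_{L^{r}(\Omega)}\le C\|F\|_{(W^{1,r'}(\Omega;\C))'}$, the constant depending only on $\Omega$, $\Lambda$, $r$ and $\|\varepsilon\|_{W^{1,3}}$. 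This is already the bound \eqref{eq:bound norm grad u}, so everything reduces to proving $u=w$. (In the conormal case the operator has constants in its kernel; one works modulo constants, which is harmless since $\langle F,1\rangle=0$ by \eqref{eq:weak-a} and only $\nabla u$ is estimated.)

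\emph{Step 2 (w is a very weak solution).} I would then integrate by parts. For $\varphi\in W^{2,2}(\Omega;\C)$ one has $\div(\varepsilon^{T}\nabla\overline{\varphi})\in L^{2}$ and $\varepsilon^{T}\nabla\overline{\varphi}\in L^{6}$, while $w\in W^{1,r}\hookrightarrow L^{2}$ because $r\ge 6/5$; the same bound $r\ge6/5$ makes $\nabla w\cdot(\varepsilon^{T}\nabla\overline{\varphi})\in L^{1}$, so that Green's formula
\[
\int_{\Omega}\varepsilon\nabla w\cdot\nabla\overline{\varphi}\,dx=-\int_{\Omega}w\,\div(\varepsilon^{T}\nabla\overline{\varphi})\,dx+\int_{\partial\Omega}w\,(\varepsilon^{T}\nabla\overline{\varphi})\cdot\nu\,d\sigma
\]
is legitimate. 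When $B\varphi=0$ the boundary term vanishes (either because $w$ carries the complementary boundary condition, or because $B\varphi$ is itself the boundary factor), and the weak formulation of Step 1 gives $\int_{\Omega}w\,\div(\varepsilon^{T}\nabla\overline{\varphi})\,dx=\langle F,\varphi\rangle$. Subtracting \eqref{eq:weak-a}, the function $v:=u-w\in L^{r}(\Omega)$ satisfies $\int_{\Omega}v\,\div(\varepsilon^{T}\nabla\overline{\varphi})\,dx=0$ for every $\varphi\in W^{2,2}(\Omega;\C)$ with $B\varphi=0$.

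\emph{Step 3 (identification).} To conclude $v=0$ it suffices that $\{\div(\varepsilon^{T}\nabla\overline{\varphi}):\varphi\in W^{2,2},\,B\varphi=0\}$ be dense in $L^{r'}(\Omega;\C)$, since pairing with $v\in L^{r}$ then forces $v=0$, i.e. $u=w$. Equivalently, for $g$ in the dense set $L^{2}\cap L^{r'}$ I must solve the adjoint problem $\div(\varepsilon^{T}\nabla\overline{\varphi})=g$ with the matching homogeneous boundary condition and with $\varphi\in W^{2,2}$.

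This $W^{2,2}$-solvability is the main obstacle, because $\varepsilon^{T}\in W^{1,3+\delta}$ is only Hölder, not Lipschitz, so the classical $H^{2}$ estimate (which needs $W^{1,\infty}$ coefficients) does not apply directly. I would circumvent it by first producing an $H^{1}$ solution through Lax--Milgram (coercivity coming from the real-part ellipticity in \eqref{eq:Hyp-Ellip}), and then bootstrapping: writing the equation in nondivergence form as $\varepsilon^{T}\!:\!\nabla^{2}\varphi=g-(\nabla\varepsilon^{T})\cdot\nabla\varphi$ and invoking the $W^{2,2}$ estimate for nondivergence elliptic operators with continuous coefficients on the $C^{1,1}$ domain (after reduction to a real elliptic system), one gets $\|\varphi\|_{W^{2,2}}\le C(\|g\|_{L^{2}}+\|(\nabla\varepsilon^{T})\cdot\nabla\varphi\|_{L^{2}}+\|\varphi\|_{L^{2}})$. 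The offending term is estimated by $\|\nabla\varepsilon^{T}\|_{L^{3+\delta}}\|\nabla\varphi\|_{L^{q}}$ with $q=\frac{2(3+\delta)}{1+\delta}<6$, and the strict inequality $q<6$ is exactly what $\delta>0$ buys: Gagliardo--Nirenberg interpolation gives $\|\nabla\varphi\|_{L^{q}}\le\eta\|\varphi\|_{W^{2,2}}+C_{\eta}\|\varphi\|_{H^{1}}$ with $\eta$ arbitrarily small, so the top-order term is absorbed and $\|\varphi\|_{W^{2,2}}\le C\|g\|_{L^{2}}$. At $\delta=0$ one would have $q=6$, the interpolation constant could not be made small, and the argument stalls, exactly the phenomenon anticipated in the introduction. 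Note that $\delta>0$ enters only qualitatively, to identify $u$ with $w$, whereas the quantitative bound \eqref{eq:bound norm grad u} comes entirely from Step 1 and therefore depends only on $\|\varepsilon\|_{W^{1,3}}$.
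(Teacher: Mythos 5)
Your proposal takes a genuinely different route from the paper -- a transposition argument (construct a forward solution, then identify it with $u$ by duality) -- and it is viable in outline, but one step is circular as written. For comparison: the paper never constructs a forward solution $w$. It fixes $\psi\in\mathcal{D}(\Omega)$ and $i\in\{1,2,3\}$, solves the adjoint problem $\div(\varepsilon^{T}\nabla\overline{\varphi^{*}})=\partial_{i}\psi$ with $B\varphi^{*}=0$, invokes the Auscher--Qafsaoui estimate $\|\varphi^{*}\|_{W^{1,q}}\le C\|\psi\|_{L^{q}}$ for every $q<\infty$ (using $\varepsilon\in W^{1,3}\subset\mathrm{VMO}$), upgrades $\varphi^{*}$ to $W^{2,2}$ by a difference-quotient argument, and then reads $|\int_{\Omega}u\,\partial_{i}\psi\,dx|=|\langle F,\varphi^{*}\rangle|\le C\|F\|_{(W^{1,r'}(\Omega;\C))'}\|\psi\|_{L^{r'}}$ directly off \eqref{eq:weak-a}; this is exactly $\partial_{i}u\in L^{r}$ together with \eqref{eq:bound norm grad u}. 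Your route needs two extra ingredients the paper avoids: $W^{1,r}$ solvability of the forward problem for data in $(W^{1,r'}(\Omega;\C))'$, including the Neumann case (acceptable -- it is the same VMO Calder\'on--Zygmund theory, and your observation that $\langle F,1\rangle=0$ settles compatibility), and, the real crux, $W^{2,2}$ solvability of the adjoint problem for arbitrary $g\in L^{2}\cap L^{r'}$, whereas the paper only ever needs the smooth right-hand sides $\partial_{i}\psi$. Your closing remark that $\delta>0$ enters only qualitatively while the quantitative constant comes from Step 1 is correct and matches the stated dependence (with the shared caveat that, strictly, CZ constants depend on the VMO modulus of $\varepsilon$, which the $W^{1,3}$ norm alone does not control; the paper's own statement has the same imprecision).

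The genuine gap is in Step 3: you ``invoke'' the nondivergence $W^{2,2}$ estimate for the Lax--Milgram solution $\varphi$, but an a priori estimate can only be applied to a function already known to lie in $W^{2,2}$, so as stated the bootstrap assumes what it must prove. Your absorption inequality (Gagliardo--Nirenberg with $q=2(3+\delta)/(1+\delta)<6$, which is precisely where $\delta>0$ is used) is the right uniform estimate, but it must be fed into a method-of-continuity argument joining $\varepsilon^{T}$ to the Laplacian, or applied to solutions with mollified coefficients $\varepsilon_{n}$ (uniform ellipticity, uniform modulus of continuity, uniform $\|\nabla\varepsilon_{n}\|_{L^{3+\delta}}$) followed by a passage to the limit; either repair is standard but has to be carried out. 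Moreover, the ADN input you cite (a complex scalar operator, i.e.\ a $2\times2$ real system, with continuous leading coefficients and Dirichlet or conormal conditions) requires checking proper ellipticity and the complementing condition; both hold because $\Re\varepsilon$ is uniformly positive definite, but this is not automatic and should be verified. A cleaner repair is to borrow the paper's device: once the adjoint solution is known to lie in $W^{1,q}$ for all $q<\infty$, difference quotients of the equation have right-hand sides bounded in $H^{-1}$ (products of $D^{h}\varepsilon^{T}$, bounded in $L^{3}$, with $\nabla\varphi\in L^{6}$, plus $D^{h}g$, bounded in $H^{-1}$ by $\|g\|_{L^{2}}$), so $\varphi\in W^{2,2}$ follows from the energy estimate alone -- and that argument works even at $\delta=0$.
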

\begin{proof}
We first observe that, since $r\ge6/5$, then both terms of the identity
\eqref{eq:weak-a} are well defined as $W^{2,2}(\Omega;\C)\subset W^{1,6}(\Omega;\C)$
and $\frac{1}{6}+\frac{1}{6/5}=1$. Let $\psi\in\Domega$ be a test
function and fix $i=1,2$ or $3$. Let $\varphi^{*}\in W^{1.2}(\Omega;\C)$
be the unique solution to the problem 
\[
\left\{ \begin{array}{l}
\div(\tepsi\nabla\overline{\varphi^{*}})=\partial_{i}\psi\qquad\mbox{ in \ensuremath{\Omega},}\\
B\phi^{*}=0\qquad\mbox{ on \ensuremath{\partial\Omega}}.
\end{array}\right.
\]
 In the case of the Neumann boundary condition, we add the normalization
condition $\int_{\Omega}\varphi^{*}\, dx=0$. Since $\varepsilon\in W^{1,3}\left(\Omega,\mathbb{C}^{3\times3}\right)$,
it is known \cite[Theorem 1]{AUSCHER-QAFSAOUI-2002} that for any
$q\in(1,\infty)$ there holds 
\begin{equation}
\left\Vert \varphi^{*}\right\Vert _{W^{1,q}(\Omega;\C)}\le C\left\Vert \psi\right\Vert _{L^{q}(\Omega;\C)}\label{eq:q bound}
\end{equation}
 for some $C=C(q,\Omega,\Lambda,\left\Vert \varepsilon\right\Vert _{W^{1,3}\left(\Omega;\mathbb{C}^{3\times3}\right)})>0$.
In particular, $\varphi^{*}\in W^{1,q}(\Omega;\C)$ for all $q<\infty$.
The usual difference quotient argument (see e.g. \cite{GRISVARD-1985,GIAQUINTA-MARTINAZZI-2005})
shows in turn that $\varphi^{*}\in W^{2,2}\left(\Omega;\C\right)$,
as $\psi$ is regular. Thus, by assumption we have 
\[
\left|\int_{\Omega}u\partial_{i}\psi\, dx\right|=\left|\int_{\Omega}u\div(\tepsi\nabla\overline{\varphi^{*}})\, dx\right|=\left|\langle F,\varphi^{*}\rangle\right|\le\left\Vert F\right\Vert _{(W^{1,r'}(\Omega;\C))'}\left\Vert \varphi^{*}\right\Vert _{W^{1,r'}(\Omega;\C)},
\]
 which in view of \eqref{eq:q bound} gives 
\[
\left|\int_{\Omega}u\partial_{i}\psi\, dx\right|\le C\left\Vert F\right\Vert _{(W^{1,r'}(\Omega;\C))'}\left\Vert \psi\right\Vert _{L^{r'}(\Omega;\C)},
\]
 as required. 
\end{proof}
We now are equipped to write the main regularity proposition for $E$,
which will lead to the proof of Theorem~\ref{thm:H1 for E} by a
bootstrap argument. 
\begin{prop}
\label{prop:bootstrap step for E} Assume that \eqref{eq:Hyp-Ellip},
\eqref{eq:Hyp2-epsilon} and \eqref{eq:reg-E} hold. Assume that $E,H\in\Hcurl$
are solutions of \eqref{eq:maxwell} with $G=0$.

Suppose that $E\in L^{q}(\Omega;\C^{3})$ and $H\in L^{s}(\Omega;\C)$,
with $2\leq q,s<\infty$ and write $r=\min((3q+q\delta)(q+3+\delta)^{^{-1}},p,s)$.
Then $E\in W^{1,r}(\Omega;\C^{3})$ and
\begin{multline}
\left\Vert E\right\Vert _{W^{1,r}(\Omega;\C^{3})}\le C\bigl(\left\Vert E\right\Vert _{L^{q}(\Omega;\C^{3})}+\left\Vert H\right\Vert _{L^{s}(\Omega;\C^{3})}+\left\Vert J_{e}\right\Vert _{L^{2}(\Omega)}\\
+\left\Vert J_{m}\right\Vert _{L^{p}(\Omega;\C^{3})}+\left\Vert \div J_{e}\right\Vert _{L^{p}(\Omega;\C)}\bigr),\label{eq:bound norm E in W^1,p}
\end{multline}
 for some constant $C$ depending on $\Omega$, $\Lambda$ given in
\eqref{eq:Hyp-Ellip}, $\k$, $\left\Vert \varepsilon\right\Vert _{W^{1,3+\delta}\left(\Omega;\mathbb{C}^{3\times3}\right)}$
and $r$ only. 
\end{prop}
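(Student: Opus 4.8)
The plan is to prove the bound one Cartesian component at a time and to feed each component into Lemma~\ref{lem:very weak_boundary}. By Proposition~\ref{prop:coupled_system}, each $E_k$ satisfies the very weak identity \eqref{eq:very weak E} for every $\varphi\in W^{2,2}(\Omega;\C)$. Since $G=0$, the boundary condition in \eqref{eq:maxwell} forces $E\times\nu=0$ in $H^{-1/2}(\partial\Omega;\C^3)$, so the tangential boundary integral $\int_{\partial\Omega}(\mathbf{e}_k\times(E\times\nu))\cdot(\tepsi\nabla\overline{\varphi})\,d\sigma$ in \eqref{eq:very weak E} vanishes. This leaves the left-hand side $\int_\Omega E_k\,\div(\tepsi\nabla\overline{\varphi})\,dx$, the volume functional
\[
\langle F,\varphi\rangle:=\int_\Omega w_k\cdot\nabla\overline{\varphi}\,dx,\qquad w_k:=(\partial_k\varepsilon)E-\varepsilon(\mathbf{e}_k\times(J_m-\ii\k\mu H))-\ik\,\mathbf{e}_k\,\div J_e,
\]
and the single normal boundary integral $\int_{\partial\Omega}(\partial_k\overline{\varphi})\,\varepsilon E\cdot\nu\,d\sigma$.

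Verifying $F\in(W^{1,r'}(\Omega;\C))'$ is routine and is what pins down $r$. By \eqref{eq:Hyp2-epsilon} we have $\partial_k\varepsilon\in L^{3+\delta}$, so H\"older gives $(\partial_k\varepsilon)E\in L^{t}$ with $\tfrac1t=\tfrac1{3+\delta}+\tfrac1q$, that is $t=(3q+q\delta)(q+3+\delta)^{-1}$. Since $\varepsilon,\mu\in L^\infty$ by \eqref{eq:Hyp-Ellip}, the remaining two terms of $w_k$ lie in $L^{\min(p,s)}$ and $L^p$ respectively, whence $w_k\in L^r$ with $r=\min(t,p,s)$ --- precisely the exponent of the statement --- and $\|w_k\|_{L^r}$ is dominated by the right-hand side of \eqref{eq:bound norm E in W^1,p}, the constant absorbing $\Lambda$, $\k$ and $\|\varepsilon\|_{W^{1,3+\delta}}$. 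Thus $|\langle F,\varphi\rangle|\le\|w_k\|_{L^r}\|\nabla\overline{\varphi}\|_{L^{r'}}\le C\|w_k\|_{L^r}\|\varphi\|_{W^{1,r'}}$. Since $r\le t<q$ and $\Omega$ is bounded, $\|E_k\|_{L^r}\lesssim\|E\|_{L^q}$ furnishes the zeroth-order part of the $W^{1,r}$ norm.

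The delicate point is the surviving normal boundary integral $\int_{\partial\Omega}(\partial_k\overline{\varphi})\,\varepsilon E\cdot\nu\,d\sigma$: it pairs a full gradient trace of $\varphi$, which $W^{1,r'}$ does not control, against $\varepsilon E\cdot\nu$, so it cannot simply be absorbed into $F$ and must be removed through the choice of trace operator $B$ in Lemma~\ref{lem:very weak_boundary}. The idea I would pursue is to test only against $\varphi$ with homogeneous conormal trace $B\varphi=\tepsi\nabla\overline{\varphi}\cdot\nu=0$, which forces $\nabla\overline{\varphi}$ to be tangential on $\partial\Omega$ and turns $\partial_k\overline{\varphi}$ into an expression in the surface gradient of $\overline{\varphi}|_{\partial\Omega}$; one then integrates by parts along the closed $\Cl^{1,1}$ surface $\partial\Omega$ to transfer that tangential derivative onto $\varepsilon E\cdot\nu$. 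To make sense of this I would exploit that \eqref{eq:maxwell} yields $\div(\varepsilon E)=\ik\,\div J_e\in L^p$, so $\varepsilon E\in\Hdiv$ has a normal trace controlled by $\|E\|_{L^q}+\|\div J_e\|_{L^p}$, together with the fact that $E\times\nu=0$ makes $E$ purely normal on $\partial\Omega$.

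I expect this boundary analysis to be the main obstacle: reconciling the full gradient trace in the boundary integral with the merely $(W^{1,r'})'$ regularity demanded by Lemma~\ref{lem:very weak_boundary} is exactly where the $\Cl^{1,1}$ hypothesis on $\partial\Omega$ and the $W^{1,3+\delta}$ regularity of $\varepsilon$ (through the estimate \eqref{eq:q bound} feeding the lemma) must be used. Once the identity \eqref{eq:very weak E} has been recast in the form \eqref{eq:weak-a} for the chosen $B$ and the functional $F$ above, Lemma~\ref{lem:very weak_boundary} gives $E_k\in W^{1,r}(\Omega;\C)$ and the gradient bound \eqref{eq:bound norm grad u}; summing the componentwise bounds over $k=1,2,3$ and adding the $L^r$ control of $E$ produces \eqref{eq:bound norm E in W^1,p}.
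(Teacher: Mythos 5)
Your Step 1 — the very weak identity from Proposition~\ref{prop:coupled_system}, the vanishing of the tangential boundary integral when $G=0$, and the H\"older bookkeeping that pins down $r$ and bounds $\|F_k\|_{L^r}$ — coincides with the paper's. The genuine gap is exactly where you anticipate it: the normal boundary term $\int_{\partial\Omega}(\partial_k\overline{\varphi})\,\varepsilon E\cdot\nu\,d\sigma$. Your proposed fix (test only against conormal $\varphi$ with $\tepsi\nabla\overline{\varphi}\cdot\nu=0$, rewrite $\partial_k\overline{\varphi}$ on $\partial\Omega$ through tangential derivatives of the trace of $\overline{\varphi}$, then integrate by parts along $\partial\Omega$ to unload the derivative onto $\varepsilon E\cdot\nu$) cannot produce a functional in $(W^{1,r'}(\Omega;\C))'$. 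A priori $\varepsilon E$ is only in $\Hdiv$ (via $\div(\varepsilon E)=\ik\div J_{e}$), so $\varepsilon E\cdot\nu$ lies merely in $H^{-1/2}(\partial\Omega)$; a tangential derivative of it has order $-3/2$ on $\partial\Omega$, while the trace of a $W^{1,r'}(\Omega;\C)$ function has smoothness at most $1-1/r'<3/2$. Two such objects admit no duality pairing, so the resulting "functional" $F$ is not bounded on $W^{1,r'}$ and Lemma~\ref{lem:very weak_boundary} cannot be invoked. (A smaller slip: the conormal condition makes $\tepsi\nabla\overline{\varphi}$, not $\nabla\overline{\varphi}$, tangential; that is fixable, but the duality obstruction above is not.)

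The paper never performs this pairing: it arranges for the boundary term to vanish identically, and this requires treating the components of $E$ near $\partial\Omega$ \emph{differently}, not uniformly as you do. After localizing and flattening $\partial\Omega$ with a $\Cl^{1,1}$ change of coordinates (which preserves the structure of \eqref{eq:maxwell} and the hypotheses), the two tangential components $E_{1},E_{2}$ are handled with \emph{Dirichlet} test functions: if $\chi\overline{\varphi}$ vanishes on the flat boundary and $j\in\{1,2\}$ is a tangential direction, then $\partial_{j}(\chi\overline{\varphi})=0$ on $\partial\Omega$, so the offending integral is zero and the Dirichlet case of Lemma~\ref{lem:very weak_boundary} applies. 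The normal component $E_{3}$ is not obtained from the lemma at all: working modulo $L^{r}$, Faraday's law $\curl E=-\ii\k\mu H+J_{m}$ gives $\mathbf{e}_{3}\times\nabla E_{3}\in L^{r}$ (the tangential derivatives of $E_{3}$), and taking the divergence of Amp\`ere's law gives $(\mathbf{e}_{3}\cdot\nabla E_{3})\,\mathbf{e}_{3}\cdot(\varepsilon\mathbf{e}_{3})\in L^{r}$, whence $\partial_{3}E_{3}\in L^{r}$ because $\Re\left(\mathbf{e}_{3}\cdot\varepsilon\mathbf{e}_{3}\right)\ge\Lambda$. Your proposal has no substitute for this algebraic step; even if your boundary manipulation were repaired, a single choice of trace operator $B$ in Lemma~\ref{lem:very weak_boundary} could not deliver all three components of $E$ near the boundary. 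You would also still need the interior cut-off argument and a covering argument to pass from these local statements to the global estimate \eqref{eq:bound norm E in W^1,p}.
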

The corresponding proposition regarding $H$ is as follows. 
\begin{prop}
\label{prop:bootstrap step for H}Assume that \eqref{eq:Hyp-Ellip},
\eqref{eq:Hyp2-mu} and \eqref{eq:reg-H} hold. Assume that $E,H\in\Hcurl$
are solutions of \eqref{eq:maxwell} with $G=0$.

Suppose that $E\in L^{s}(\Omega;\C^{3})$ and $H\in L^{q}(\Omega;\C)$,
with $2\leq q,s<\infty$ and write $r=\min((3q+q\delta)(q+3+\delta)^{^{-1}},p,s)$.
Then $H\in W^{1,r}(\Omega;\C^{3})$ and
\begin{multline*}
\left\Vert H\right\Vert _{W^{1,r}(\Omega;\C^{3})}\le C\bigl(\left\Vert H\right\Vert _{L^{q}(\Omega;\C^{3})}+\left\Vert E\right\Vert _{L^{s}(\Omega;\C^{3})}+\left\Vert J_{m}\right\Vert _{L^{2}(\Omega)}+\left\Vert J_{e}\right\Vert _{L^{p}(\Omega;\C^{3})}\\
+\left\Vert \div J_{m}\right\Vert _{L^{p}(\Omega;\C)}+\left\Vert J_{m}\cdot\nu\right\Vert _{W^{1-\frac{1}{p},p}(\partial\Omega;\C)}\bigr),
\end{multline*}
for some constant $C$ depending on $\Omega$, $\Lambda$ given in
\eqref{eq:Hyp-Ellip}, $\k$, $\left\Vert \mu\right\Vert _{W^{1,3+\delta}\left(\Omega;\mathbb{C}^{3\times3}\right)}$
and $r$ only. 
\end{prop}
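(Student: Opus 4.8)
The plan is to run the argument completely parallel to the proof of Proposition~\ref{prop:bootstrap step for E}, exchanging the roles of the pairs $(E,\varepsilon,J_{e})$ and $(H,\mu,J_{m})$. By the second bullet of Proposition~\ref{prop:coupled_system}, each component $H_{k}$ satisfies the very weak identity \eqref{eq:very weak H}, and since \eqref{eq:Hyp2-mu} gives $\tmuu\nabla\overline{\varphi}\in\Hone$ for every $\varphi\in W^{2,2}(\Omega;\C)$, the verbatim analogue of Lemma~\ref{lem:very weak_boundary} with $\tepsi$ replaced by $\tmuu$ holds. Thus it suffices to show that the right-hand side of \eqref{eq:very weak H}, viewed as a functional of $\varphi$ over the class $\{\varphi\in W^{2,2}(\Omega;\C):B\varphi=0\}$ for the conormal operator $B\varphi=\tmuu\nabla\overline{\varphi}\cdot\nu$, is bounded in $(W^{1,r'}(\Omega;\C))'$ by the right-hand side of the claimed estimate; the exponent $r$ in the conclusion is exactly what this bound dictates.

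First I would dispose of the interior integral in \eqref{eq:very weak H}, which is the easy and robust part. By H\"older's inequality the bracket in \eqref{eq:b} lies in $L^{r}(\Omega;\C^{3})$: the term $(\partial_{k}\mu)H$ is in $L^{(3q+q\delta)(q+3+\delta)^{-1}}$ since $\partial_{k}\mu\in L^{3+\delta}$ by \eqref{eq:Hyp2-mu} and $H\in L^{q}$; the terms $\mu(\mathbf{e}_{k}\times J_{e})$ and $\ik\mathbf{e}_{k}\div J_{m}$ are in $L^{p}$ by \eqref{eq:reg-H}; and $\ii\k\mu(\mathbf{e}_{k}\times\varepsilon E)$ is in $L^{s}$ because $E\in L^{s}$ and $\mu,\varepsilon\in L^{\infty}$ by \eqref{eq:Hyp-Ellip}. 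Pairing against $\nabla\overline{\varphi}$ therefore produces a bounded functional on $W^{1,r'}(\Omega;\C)$ with $r=\min((3q+q\delta)(q+3+\delta)^{-1},p,s)$, which is the asserted exponent, and one checks $r\ge6/5$ so that the lemma applies.

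The boundary terms are the crux and the genuine point of departure from the $E$ case, precisely because \eqref{eq:maxwell} carries no boundary condition on $H$; this is what forces the extra datum $J_{m}\cdot\nu$ into the estimate. The two surface integrals in \eqref{eq:very weak H} are $\int_{\partial\Omega}(\partial_{k}\overline{\varphi})\,\mu H\cdot\nu\,d\sigma$ and $-\int_{\partial\Omega}(\mathbf{e}_{k}\times(H\times\nu))\cdot(\tmuu\nabla\overline{\varphi})\,d\sigma$. Expanding $\mathbf{e}_{k}\times(H\times\nu)=\nu_{k}H-H_{k}\nu$ and imposing $B\varphi=0$ removes the contribution $\int_{\partial\Omega}H_{k}(\tmuu\nabla\overline{\varphi}\cdot\nu)\,d\sigma$, which carries an uncontrolled trace of $H$; this is exactly why the conormal, rather than the Dirichlet, boundary operator is the right choice. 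The aim is then to rewrite what remains purely in terms of controlled data. The decisive observation is that, using the second equation of \eqref{eq:maxwell}, $\mu H=\ik(\curl E-J_{m})$, so that $\mu H\cdot\nu=\ik(\curl E-J_{m})\cdot\nu$ on $\partial\Omega$; since $E\times\nu=G\times\nu=0$ and the normal trace of a curl is the surface divergence of the tangential trace, $\curl E\cdot\nu=0$, whence $\mu H\cdot\nu=-\ik\,J_{m}\cdot\nu$ belongs to $W^{1-\frac{1}{p},p}(\partial\Omega;\C)$ by \eqref{eq:reg-H}. Pairing this datum, together with its tangential derivatives handled by a surface integration by parts on the closed manifold $\partial\Omega$, against the trace of $\varphi$ then yields a functional bounded in $(W^{1,r'}(\Omega;\C))'$ and contributes the term $\|J_{m}\cdot\nu\|_{W^{1-\frac{1}{p},p}(\partial\Omega;\C)}$.

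Applying the $\mu$-analogue of Lemma~\ref{lem:very weak_boundary} to each $H_{k}$ and summing over $k=1,2,3$ then gives $H\in W^{1,r}(\Omega;\C^{3})$ with the asserted bound. I expect the main obstacle to be precisely the boundary analysis: with no boundary condition on $H$ one cannot annihilate both surface integrals by a single choice of test-function condition, and the delicate point is to verify that, after using $B\varphi=0$ and the identity $\mu H\cdot\nu=-\ik\,J_{m}\cdot\nu$, every surviving boundary contribution—in particular any residual tangential trace of $H$ produced by the expansion of $\mathbf{e}_{k}\times(H\times\nu)$—either cancels or is re-expressed through the controlled data, and that the result genuinely pairs against $\varphi$ in $W^{1,r'}$ rather than requiring $W^{2,2}$ control. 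Keeping track of these cancellations in the surface integrals is where the care is needed.
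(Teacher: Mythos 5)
Your skeleton matches the paper's (the very weak identity \eqref{eq:very weak H}, Neumann-type test functions $\tmuu\nabla\overline{\varphi}\cdot\nu=0$, and a $\mu$-analogue of Lemma~\ref{lem:very weak_boundary}), and your interior bound for $G_k$ in $L^{r}$ is correct. But the boundary analysis, which you rightly call the crux, is left as an unresolved "delicate point", and what is missing is precisely the content of the paper's proof. After the expansion $\mathbf{e}_{k}\times(H\times\nu)=\nu_{k}H-H_{k}\nu$, the Neumann condition kills only $-H_{k}(\tmuu\nabla\overline{\varphi}\cdot\nu)$; the surviving term $\nu_{k}\,H\cdot(\tmuu\nabla\overline{\varphi})$ pairs the trace of $H$ (only in $W^{-1/r,r}(\partial\Omega;\C^{3})$) against the full boundary gradient of $\varphi$, and such a pairing is \emph{not} bounded by $\left\Vert \varphi\right\Vert _{W^{1,r'}(\Omega;\C)}$ — so the functional is not in $(W^{1,r'}(\Omega;\C))'$ and the lemma cannot be applied. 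The paper's resolution has two ingredients you omit. First, it localizes and flattens the boundary, so that for the \emph{tangential} components $j=1,2$ one has $\nu_{j}=\mathbf{e}_{j}\cdot\mathbf{e}_{3}=0$ on the support of the cutoff: the dangerous term vanishes identically, and the only surviving surface term, $S(\varphi)=-\int_{\partial\Omega}(\mathbf{e}_{j}\times(H\times\mathbf{e}_{3}))\cdot(\tmuu\nabla\chi)\overline{\varphi}\,d\sigma$, involves $\overline{\varphi}$ (not its gradient) and is controlled through $H\times\nu\in W^{-1/r,r}(\partial\Omega;\C^{3})$. Second, the \emph{normal} component $H_{3}$ is never obtained from the lemma at all (for $k=3$ the obstruction above is unavoidable); instead it is recovered algebraically, as in Step 3 of the proof of Proposition~\ref{prop:bootstrap step for E}: since $\curl H\in L^{r}$ and $\div(\mu H)=(\ii\k)^{-1}\div J_{m}\in L^{r}$, and $H_{1},H_{2}$ are already in $W^{1,r}$ near the boundary, one gets $\nabla H_{3}=\mathbf{e}_{3}(\mathbf{e}_{3}\cdot\nabla H_{3})$ modulo $L^{r}$ and then uses ellipticity of $\mathbf{e}_{3}\cdot(\mu\mathbf{e}_{3})$ to conclude $\nabla H_{3}\in L^{r}$. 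Your plan to "apply the lemma to each $H_{k}$ and sum over $k=1,2,3$" fails for $k=3$.

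There is a second, less fatal but real, gap in your treatment of the term $\int_{\partial\Omega}(\partial_{k}\overline{\varphi})\,\mu H\cdot\nu\,d\sigma$. Your identity $\mu H\cdot\nu=-\ik J_{m}\cdot\nu$ is correct (it is \eqref{eq:monk}), but the proposed "surface integration by parts" is not innocuous: $(\partial_{k}\overline{\varphi})|_{\partial\Omega}$ contains a normal derivative, which must first be converted into tangential derivatives using the Neumann condition (introducing coefficients $\tmuu\nu/(\tmuu\nu\cdot\nu)$), and the subsequent integration by parts differentiates the trace of $\mu$, which for $\mu\in W^{1,3+\delta}$ exists only in a negative-order Sobolev space; making sense of the resulting products requires work you have not done. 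The paper sidesteps all of this by first reducing to $J_{m}\cdot\nu=0$: it extends $J_{m}\cdot\nu$ to $j_{m}\in W^{1,p}(\Omega;\C)$, extends $\nu$ to $h\in\Cl^{0,1}(\overline{\Omega};\R^{3})$, and replaces $(E,H)$ by $(E,H+i\omega^{-1}\mu^{-1}j_{m}h)$ with modified currents; then \eqref{eq:monk} gives $\mu H\cdot\nu=0$ and that boundary term disappears entirely. This reduction is also exactly where $\left\Vert J_{m}\cdot\nu\right\Vert _{W^{1-\frac{1}{p},p}(\partial\Omega;\C)}$ enters the final estimate.
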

We prove both propositions below. We are now ready to prove Theorems~\ref{thm:H1 for E},
\ref{thm:H1 for H} and \ref{thm:W^1,p for E and H}.
\begin{proof}[Proof of Theorems~\ref{thm:H1 for E}, \ref{thm:H1 for H} and \ref{thm:W^1,p for E and H}]
Let us prove Theorem~\ref{thm:H1 for E} first. Considering the
system satisfied by $E-G$ and $H$, we may assume $G=0$. Since $H\in L^{2}\left(\Omega;\mathbb{C}^{3}\right)$,
we may apply Proposition~\ref{prop:bootstrap step for E} with $p=s=2$
a finite number of times with increasing values of $q$. For $q_{n}\geq2$
we obtain $E\in W^{1,r_{n}}\left(\Omega;\mathbb{C}^{3}\right)$, with
$r_{n}=\min(q_{n}(3+\delta)\left(q_{n}+3+\delta\right)^{-1},2)$.
If $r_{n}=2,$ the result is proved. If $r_{n}<2$, Sobolev embeddings
show that $E\in L^{q_{n+1}}\left(\Omega;\mathbb{C}^{3}\right)$ with
\[
q_{n+1}=q_{n}+\frac{\delta q_{n}^{2}}{9+\delta\left(3-q_{n}\right)}\ge q_{n}+\frac{4\delta}{9+\delta},
\]
 using the bounds $q_{n}\ge2$ and $9+\delta\left(3-q_{n}\right)>0$,
which follows from $r_{n}<2$. Thus the sequence $r_{n}$ converges
to $2$ in a finite number of steps. Note that in estimate \eqref{eq:bound norm E in H^1},
$H$ is bounded in terms of $E$ and $J_{m}$ using the simple bound
\[
\Lambda\left|\omega\right|\|H\|_{L^{2}\left(\Omega;\mathbb{C}^{3}\right)}\leq\|\curl E\|_{L^{2}\left(\Omega;\mathbb{C}^{3}\right)}+\|J_{m}\|_{L^{2}\left(\Omega;\mathbb{C}^{3}\right)},
\]
 which follows from \eqref{eq:maxwell}. The proof of Theorem~\ref{thm:H1 for H}
is similar, using Proposition~\ref{prop:bootstrap step for H} in
lieu of Proposition~\ref{prop:bootstrap step for E} to bootstrap.

Let us now turn to Theorem~\ref{thm:W^1,p for E and H}. Suppose
first $p\leq3$ and $\delta<$ 3. From Theorem~\ref{thm:H1 for E}
(resp. Theorem~\ref{thm:H1 for H}) and Sobolev Embeddings, we have
$E\in L^{6}(\Omega;\C^{3})$ (resp. $H\in L^{6}(\Omega;\C^{3})$).
We apply Propositions~\ref{prop:bootstrap step for E} and \ref{prop:bootstrap step for H}
a finite number of times, with $q=s$. Starting with $q_{n}\geq6=q_{0}$
we obtain $E\mbox{ (and }H)\in W^{1,r_{n}}(\Omega;\C^{3})$, with
$r_{n}=\min(q_{n}(3+\delta)(3+\delta+q_{n})^{-1},p)$. If $r_{n}=p$,
the result is proved. If $r_{n}<p$, Sobolev embeddings imply that
$E\mbox{ and }H$ belong to $L^{q_{n+1}}(\Omega;\C^{3})$, with 
\[
q_{n+1}=q_{n}+\frac{\delta q_{n}^{2}}{9+\delta\left(3-q_{n}\right)}\geq q_{n}+\frac{q_{0}^{2}\delta}{9+\delta\left(3-q_{n}\right)}\ge q_{n}+\frac{12\delta}{3-\delta},
\]
 since $q_{n}\ge6$, $\delta<3$ and $9+\delta\left(3-q_{n}\right)>0$
(as $r_{n}<3$). Thus the sequence $r_{n}$ converges to $p$ in a
finite number of steps.

Suppose now $p>3$ and $\delta\in(0,\infty)$. The previous argument
shows that $E$ and $H$ are in $W^{1,3}(\Omega;\C^{3})$. One more
iteration of the argument concludes the proof if $p<3+\delta$, and
shows otherwise that $E$ and $H$ are in $L^{\infty}(\Omega;\C^{3})$,
and the result is obtained by a final application of Propositions~\ref{prop:bootstrap step for E}
and~\ref{prop:bootstrap step for H}. 
\end{proof}
We now prove Proposition~\ref{prop:bootstrap step for E}.
\begin{proof}[Proof of Proposition~\ref{prop:bootstrap step for E}]
We subdivide the proof into four steps.

\emph{Step 1. Variational formulation. }Since $E\times\nu=0$ on $\partial\Omega$,
identity \eqref{eq:very weak E} shows that for every $\varphi\in W^{2,2}(\Omega;\C)$
and $k=1,2,3$ there holds 
\begin{equation}
\int_{\Omega}E_{k}\div(\tepsi\nabla\overline{\varphi})\, dx=\int_{\Omega}F_{k}\cdot\nabla\overline{\varphi}\, dx+\int_{\partial\Omega}(\partial_{k}\overline{\varphi})\varepsilon E\cdot\nu\, d\sigma,\label{eq:eq for E}
\end{equation}
 where we set 
\[
F_{k}=(\partial_{k}\varepsilon)\, E-\varepsilon\left(\mathbf{e}_{k}\times\left(J_{m}-\ii\k\mu H\right)\,\right)-\ik\mathbf{e}_{k}\,\div J_{e}.
\]
 Since $(\partial_{k}\varepsilon)E\in\, L^{q(3+\delta)(q+3+\delta)^{^{-1}}}(\Omega;\C^{3})$,
we have that $F_{k}\in L^{r}(\Omega;\C^{3})$.

\emph{Step 2. Interior regularity. }Given a smooth open subdomain
$\Omega_{0}$ such that $\overline{\Omega_{0}}\subset\Omega$, we
consider a cut-off function $\chi\in C_{0}^{\infty}\left(\Omega;\mathbb{R}\right)$
such that $\chi=1$ in $\Omega_{0}$. A computation gives for $\varphi\in W^{2,2}(\Omega;\C)$
\[
\int_{\Omega}\chi E_{k}\div(\tepsi\nabla\overline{\varphi})\, dx=\int_{\Omega}E_{k}\div(\tepsi\nabla(\chi\overline{\varphi}))\, dx+T_{k}(\varphi),
\]
 where $T_{k}(\varphi)=-\int_{\Omega}E_{k}(\div(\tepsi\overline{\varphi}\nabla\chi)+\varepsilon\nabla\chi\cdot\nabla\overline{\varphi})\, dx$.
Thus, by \eqref{eq:eq for E} we obtain 
\[
\int_{\Omega}\chi E_{k}\div(\tepsi\nabla\overline{\varphi})\, dx=\int_{\Omega}F_{k}\cdot\nabla(\chi\overline{\varphi})\, dx+T_{k}(\varphi),
\]
 since $\chi$ is compactly supported. Using Sobolev embeddings and
the fact that $F_{k}$ is in $L^{r}\left(\Omega;\mathbb{C}^{3}\right)$,
we verify that $\varphi\mapsto\int_{\Omega}F_{k}\cdot\nabla(\chi\overline{\varphi})\, dx+T_{k}(\varphi)$
is in $(W^{1,r'}(\Omega;\C))'$. Thanks to Lemma~\ref{lem:very weak_boundary}
we conclude that $\chi E_{k}\in W^{1,r}(\Omega;\C)$, namely $E\in W^{1,r}(\Omega_{0};\C^{3})$.

\emph{Step 3. Boundary regularity. } Take now $x_{0}\in\partial\Omega$.
Since $\partial\Omega$ is of class $\Cl^{1,1}$ there exists a ball
$B$ centred in $x_{0}$ and an orthogonal change of coordinates $\Phi\in\Cl^{1,1}(B;\R^{3})$
such that in the new system $u_{i}=\Phi_{i}(x)$ we have $\Phi(B\cap\Omega)=\{u_{3}<0\}\cap B(0,R)$.
We can now express the relevant quantities with respect to the coordinates
$u_{1},u_{2},u_{3}$. Let the components of vectors be marked by tildes
if they are expressed in the $u_{i}$ coordinate system. Denoting
$L=\curl E$ we have (see \cite[Lemma 3.1]{WEBER-1981}) 
\[
\tilde{E}=(\nabla\Phi)E,\qquad\tilde{L}=(\nabla\Phi)L=(\partial_{u_{1}},\partial_{u_{2}},\partial_{u_{3}})\times\tilde{E},
\]
 and the corresponding identities for $H$, as $\nabla\Phi$ is an
orthogonal matrix chosen so that $\det\nabla\Phi=1$. Therefore, using
the notation $\tilde{\nabla}=(\partial_{u_{1}},\partial_{u_{2}},\partial_{u_{3}})$,
\eqref{eq:maxwell} implies 
\[
\left\{ \begin{array}{l}
\tilde{\nabla}\times\tilde{H}=\ii\omega\tilde{\varepsilon}\tilde{E}+\tilde{J}_{e},\\
\tilde{\nabla}\times\tilde{E}=-\ii\k\tilde{\mu}\tilde{H}+\tilde{J}_{m},\\
\tilde{E}_{1}=\tilde{E}_{2}=0\quad\mbox{ on }{u_{3}=0}.
\end{array}\right.
\]
 where $\tilde{\varepsilon}=(\nabla\Phi)\epsilon(\nabla\Phi)^{T}$,
$\tilde{J}_{e}=(\nabla\Phi)J_{e}$, $\tilde{\mu}=(\nabla\Phi)\mu(\nabla\Phi)^{T}$
and $\tilde{J}_{m}=(\nabla\Phi)J_{m}$. Namely, Maxwell's equations
\eqref{eq:maxwell} in the new coordinates $u_{i}$ can be written
in the same form. Note that $\tilde{\epsilon}$ and $\tilde{\mu}$
satisfy the ellipticity condition \eqref{eq:Hyp-Ellip} for some $\tilde{\Lambda}>0$.
Moreover, since $\nabla\Phi\in W^{1,\infty}(B;\R^{3\times3})$, the
regularity assumptions \eqref{eq:Hyp2-epsilon} and \eqref{eq:reg-E}
hold for $\tilde{\epsilon}$ and for the sources $\tilde{J}_{e}$,
$\tilde{J}_{m}$. Furthermore, $E\in W^{1,r}(B\cap\Omega;\C^{3})$
if $\tilde{E}\in W^{1,r}(\{u_{3}<0\}\cap B(0,R);\C^{3})$.

We have shown that without loss of generality we can assume that around
$x_{0}$ the boundary is flat. More precisely, suppose that $B\cap\Omega=\{x\cdot\mathbf{e}_{3}<0\}\cap B(0,R)$
and take $\chi\in\mathcal{D}(B;\mathbb{R})$ such that $\chi=1$ in
a neighbourhood $\tilde{B}$ of $x_{0}$.

Let us first consider the two tangential components of $E$, that
is, $E_{j}$ with $j=1,2$. Proceeding as in step 2 we obtain for
every $\varphi\in W^{2,2}(\Omega;\C)\cap W_{0}^{1,2}(\Omega;\C)$
\[
\int_{\Omega}\chi E_{j}\div(\tepsi\nabla\overline{\varphi})\, dx=\int_{\Omega}E_{j}\div(\tepsi\nabla(\chi\overline{\varphi}))\, dx+T_{j}(\varphi),
\]
 where $T_{j}(\varphi)=-\int_{\Omega}E_{j}(\div(\tepsi\overline{\varphi}\nabla\chi)+\varepsilon\nabla\chi\cdot\nabla\overline{\varphi})\, dx$.
In view of identity \eqref{eq:eq for E}, since $\chi\overline{\varphi}=0$
on $\partial\Omega$, we have 
\[
\int_{\Omega}\chi E_{j}\div(\tepsi\nabla\overline{\varphi})\, dx=\int_{\Omega}F_{j}\cdot\nabla(\chi\overline{\varphi})\, dx+T_{j}(\varphi).
\]
 As in step 2, thanks to Lemma~\ref{lem:very weak_boundary} we conclude
that $\chi E_{j}\in W^{1,r}(\Omega;\C)$ for $j=1,2$.

Let us now turn to the normal component $E_{3}$. Consider the second
part of Maxwell's equations \eqref{eq:maxwell}, $\curl E=-\ii\k{\mu}H+J_{m}$
in the quotient space where every element of $L^{r}\bigl(\tilde{B};\C^{3}\bigr)$
is identified with nought, that is, $W^{-1,2}\bigl(\tilde{B};\C^{3}\bigr)/L{}^{r}\bigl(\tilde{B};\C^{3}\bigr)$.
We find $0=-\curl E=-\curl\bigl(E_{3}\mathbf{e}_{3}\bigr)=\mathbf{e}_{3}\times\nabla{E}_{3},$
since $-\ii\k{\mu}H+J_{m}\in L^{r}\left(\Omega;\mathbb{C}^{3}\right)$
and $E_{1},E_{2}\in W^{1,r}(\tilde{B};\C)$. In other words, 
\begin{equation}
\nabla{E}_{3}=\mathbf{e}_{3}\left(\mathbf{e}_{3}\cdot\nabla{E}_{3}\right)\mbox{ in }W^{-1,2}\bigl(\tilde{B};\C^{3}\bigr)/L{}^{r}\bigl(\tilde{B};\C^{3}\bigr).\label{eq:tildeE3-2}
\end{equation}
 Therefore, taking now the divergence of the first identity in Maxwell's
equations \eqref{eq:maxwell}, and using the fact that $\div J_{e}\in L^{r}\left(\Omega;\mathbb{C}\right)$
and $E\in L^{r}\left(\Omega;\mathbb{C}^{3}\right)$ we obtain, in
the quotient space $W^{-1,2}\bigl(\tilde{B};\C\bigr)/L{}^{r}\bigl(\tilde{B};\C\bigr)$,
\[
0=\div(\varepsilon E)=\div(E_{3}\epsilon\mathbf{e}_{3})=\nabla E_{3}\cdot(\epsilon\mathbf{e}_{3})=(\mathbf{e}_{3}\cdot\nabla E_{3})\,\mathbf{e}_{3}\cdot(\epsilon\mathbf{e}_{3}).
\]
 Hence, in view of \eqref{eq:Hyp-Ellip} and \eqref{eq:tildeE3-2}
we obtain $\nabla{E}{}_{3}\in L{}^{r}\bigl(\tilde{B};\C^{3}\bigr)$,
and therefore $E\in W^{1,r}\bigl(\tilde{B};\C^{3}\bigr)$.

\emph{Step 4. Global regularity.} Combining the interior and the boundary
regularities, a standard ball covering argument shows $E\in W^{1,r}(\Omega;\C^{3})$.
The estimate given in \eqref{eq:bound norm E in W^1,p} follows from
Lemma~\ref{lem:very weak_boundary}. \qquad{}\endproof
\end{proof}
We now turn to the proof of Proposition~\ref{prop:bootstrap step for H}.
Naturally, the interior estimates can be obtained in the exact same
way, substituting the very weak formulations for the components of
$E$ by the corresponding identities for the components of $H$. The
boundary estimates require different arguments, and we detail this
step below.
\begin{proof}[Proof of Proposition~\ref{prop:bootstrap step for H}. Boundary regularity]
 First note that it is sufficient to consider the case when $J_{m}\cdot\nu=0$
on $\partial\Omega$.

Indeed, as we assumed that $J_{m}\cdot\nu\in W^{1-\frac{1}{p},p}(\partial\Omega;\C)$,
there exists $j_{m}\in W^{1,p}(\Omega;\C)$ such that $j_{m}=J_{m}\cdot\nu$
in the sense of traces on $\partial\Omega$. Since $\partial\Omega$
is of class $\Cl^{1,1}$, there exists $h\in\Cl^{0,1}\left(\overline{\Omega},\mathbb{R}^{3}\right)$
such that $h=\nu$ on $\partial\Omega$. Now, notice that $(E',H')=(E,H+i\omega^{-1}\mu^{-1}j_{m}h)$
are solutions of a Maxwell's system of equations with the same boundary
condition, but with the currents being changed to $J'_{m}=J_{m}-j_{m}h$
and $J'_{e}=J_{e}+i\omega^{-1}\curl(j_{m}\mu^{-1}h)$. We have $J'_{m}\in W^{1,p}(\mbox{div},\Omega)$
and $J'_{m}\cdot\nu=0$ on $\partial\Omega$, whereas $J'_{e}\in L^{\tilde{p}}(\Omega;\C^{3})$,
with $\tilde{p}=\min(p,3+\delta)$. Since $i\omega^{-1}\mu^{-1}j_{m}h\in W^{1,r}(\Omega;\C^{3})$
the regularity of $H'$ will imply that of $H$.

We next observe that, as $E\times\nu=0$ on $\partial\Omega$, we
may write 
\begin{equation}
0=\div_{\partial\Omega}(E\times\nu)=(\curl E)\cdot\nu=-\ii\k{\mu}H\cdot\nu+J_{m}\cdot\nu\mbox{ in }H^{-\frac{1}{2}}(\partial\Omega),\label{eq:monk}
\end{equation}
 see e.g. \cite[(3.52)]{MONK-2003}. In other words, ${\mu}H\cdot\nu=0$
in $H^{-\frac{1}{2}}(\partial\Omega)$. Then, by \eqref{eq:very weak H},
for every $\varphi\in W^{2,2}(\Omega;\C)$ and $k=1,2,3$ there holds
\begin{equation}
\int_{\Omega}H_{k}\div(\tmuu\nabla\overline{\varphi})\, dx=\int_{\Omega}G_{k}\cdot\nabla\overline{\varphi}\, dx-\int_{\partial\Omega}\left(\mathbf{e}_{k}\times\left(H\times\nu\right)\right)\cdot({\mu}^{T}\nabla\overline{\varphi})\, d\sigma,\label{eq:eq for H}
\end{equation}
 where 
\[
G_{k}=(\partial_{k}{\mu})H-{\mu}\left(\mathbf{e}_{k}\times\left(J_{e}+\ii\k\varepsilon E\right)\right)+\ik\mathbf{e}_{k}\div J_{m}\in L^{r}(\Omega;\C^{3}),
\]
 since $(\partial_{k}{\mu})H\in L^{q(3+\delta)(q+3+\delta)^{^{-1}}}(\Omega;\C^{3})$.

Take $x_{0}\in\partial\Omega$. As in the proof of Proposition~\ref{prop:bootstrap step for E},
we can assume that $\partial\Omega$ is the plane $x\cdot\mathbf{e}_{3}=0$
in a neighbourhood $B$ of $x_{0}$. Again let us focus on the tangential
components first. Take $\chi\in\mathcal{D}(B;\mathbb{R})$ such that
$\chi=1$ in a neighbourhood $\tilde{B}$ of $x_{0}$ and $j=1,2$.

We choose a test function satisfying a Neumann type boundary condition,
that is $\varphi\in W^{2,2}(\Omega;\C)$ such that $\tmuu\nabla\overline{\varphi}\cdot\nu=0$
on $\partial\Omega$. We have 
\[
\int_{\Omega}\chi{H}{}_{j}\div(\tmuu\nabla\overline{\varphi})\, dx=\int_{\Omega}H_{j}\div(\tmuu\nabla(\chi\overline{\varphi}))\, dx+R(\varphi),
\]
 where $R(\varphi)=-\int_{\Omega}H_{j}\bigl(\div(\tmuu\overline{\varphi}\nabla\chi)+{\mu}\nabla\chi\cdot\nabla\overline{\varphi}\bigr)\, dx$.
From identity \eqref{eq:eq for H} we obtain 
\begin{equation}
\int_{\Omega}\chi{H}{}_{j}\div(\tmuu\nabla\overline{\varphi})\, dx=\int_{\Omega}G_{j}\cdot\nabla(\chi\overline{\varphi})\, dx+R(\varphi)+S(\varphi),\label{eq:weak for H'}
\end{equation}
 where 
\[
S(\varphi)=-\int_{\partial\Omega}\left(\mathbf{e}_{j}\times\left(H\times\mathbf{e}_{3}\right)\right)\cdot({\mu}^{T}\nabla(\chi\overline{\varphi}))\, d\sigma.
\]
 As before, the functional $\varphi\mapsto\int_{\Omega}G_{j}\cdot\nabla(\chi\overline{\varphi})\, dx+R(\varphi)$
is in $(W^{1,r'}(\Omega;\C))'$. We shall now prove that $S\in(W^{1,r'}(\Omega;\C))'$.
Since $\tmuu\nabla\overline{\varphi}\cdot\nu=0$ on $\partial\Omega$
and $\nu=\mathbf{e}_{3}$ on $B$, we have $\chi\left(\mathbf{e}_{j}\times\left(H\times\mathbf{e}_{3}\right)\right)\cdot({\mu}^{T}\nabla\overline{\varphi})=0$,
thus 
\[
S(\varphi)=-\int_{\partial\Omega}\left(\mathbf{e}_{j}\times\left(H\times\mathbf{e}_{3}\right)\right)\cdot({\mu}^{T}\nabla\chi)\,\overline{\varphi}\, d\sigma.
\]
 By hypothesis we have $H\in W^{1,r}(\curl,\Omega)$, whence $H\times\nu\in W^{-1/r,r}(\partial\Omega;\C^{3})$.
It follows that $\left(\mathbf{e}_{j}\times\left(H\times\mathbf{e}_{3}\right)\right)\cdot({\mu}^{T}\nabla\chi)\in W^{-1/r,r}(\partial\Omega;\C^{3})$.
As a result (see \cite[Theorem 1.5.1.2]{GRISVARD-1985}), 
\[
\left|S(\varphi)\right|\le C\left\Vert \phi\right\Vert _{W^{1-\frac{1}{r'},r'}(\partial\Omega;\C)}\le C\left\Vert \phi\right\Vert _{W^{1,r'}(\Omega;\C)},
\]
 for some $C>0$ independent of $\varphi$; in other words $S\in(W^{1,r'}(\Omega;\C))'$.
We can now apply Lemma~\ref{lem:very weak_boundary} to \eqref{eq:weak for H'}
and obtain $\chi{H}\cdot\mathbf{e}_{j}\in W^{1,r}(\Omega;\C)$. The
rest of the proof follows faithfully that of Proposition~\ref{prop:bootstrap step for E}.
\end{proof}
To conclude this section, we point out that higher regularity results
follow naturally under appropriate assumptions.
\begin{thm}
\label{thm:higher regularity-new} Suppose that \eqref{eq:Hyp-Ellip}
holds and take $N\in\mathbb{N}^{*}$. Assume additionally that $\partial\Omega$
is of class $\Cl^{N,1}$ and that 
\begin{eqnarray*}
 & \varepsilon, & {\mu}\in W^{N,p}\left(\Omega;\mathbb{C}^{3\times3}\right),\qquad J_{e},J_{m}\in W^{N,p}\left(\div,\Omega\right),\\
 & J_{m} & \cdot\nu\in W^{N-\frac{1}{p},p}(\partial\Omega;\C),\qquad G\in W^{N,p}(\Omega;\C^{3}),
\end{eqnarray*}
 for some $p>3$. If $E$ and $H$ in $H\left(\curl,\Omega\right)$
are weak solutions of \eqref{eq:maxwell}, then $E,H\in W^{N,p}\left(\Omega;\mathbb{C}^{3}\right)$
and there holds
\begin{multline*}
\left\Vert E\right\Vert _{W^{N,p}\left(\Omega;\mathbb{C}^{3}\right)}+\left\Vert H\right\Vert _{W^{N,p}\left(\Omega;\mathbb{C}^{3}\right)}\le C\left(\left\Vert E\right\Vert _{L^{2}\left(\Omega\right)}+\left\Vert H\right\Vert _{L^{2}\left(\Omega\right)}+\left\Vert G\right\Vert _{W^{N,p}\left(\Omega;\C^{3}\right)}\right.\\
+\Bigl.\left\Vert J_{e}\right\Vert _{W^{N,p}\left(\div,\Omega\right)}+\left\Vert J_{m}\right\Vert _{W^{N,p}\left(\div,\Omega\right)}+\left\Vert J_{m}\cdot\nu\right\Vert _{W^{N-\frac{1}{p},p}(\partial\Omega;\C)}\Bigr),
\end{multline*}
for some constant $C$ depending on $\Omega$, $\Lambda$, $\k$,
$\left\Vert \varepsilon\right\Vert _{W^{N,p}\left(\Omega;\mathbb{C}^{3\times3}\right)}$
and $\left\Vert \mu\right\Vert _{W^{N,p}\left(\Omega;\mathbb{C}^{3\times3}\right)}$. \end{thm}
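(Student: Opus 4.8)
The plan is to argue by induction on $N$. The base case $N=1$ is exactly Theorem~\ref{thm:W^1,p for E and H}: since $p>3$ we may take $\delta=p-3>0$ in \eqref{eq:Hyp2-epsilon} and \eqref{eq:Hyp2-mu}, so that $q=\min(p,3+\delta)=p$ and the $W^{1,p}$ estimate holds. Assuming the statement at order $N-1$, I would first apply it (the data being a fortiori in the $W^{N-1,p}$ classes, with the lower-order trace condition on $J_m\cdot\nu$ following from the higher one) to obtain $E,H\in W^{N-1,p}\left(\Omega;\C^3\right)$ with the corresponding estimate. It then remains to gain one derivative, that is, to show $\nabla E,\nabla H\in W^{N-1,p}\left(\Omega;\C^3\right)$.

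Using a finite partition of unity I would treat interior and boundary patches separately. Differentiating \eqref{eq:maxwell} shows that $(\partial_l E,\partial_l H)$ solves the same system with coefficients $\varepsilon,\mu$ and sources $\partial_l J_e+\ii\k(\partial_l\varepsilon)E$ and $\partial_l J_m-\ii\k(\partial_l\mu)H$; by the induction regularity of $E,H$ and the algebra property of $W^{N-1,p}$ (valid since $(N-1)p>3$) these sources lie in the class $W^{N-1,p}\left(\div,\Omega\right)$ required by the induction hypothesis. In the interior this immediately gives $\partial_l E,\partial_l H\in W^{N-1,p}$ for every $l$. Near a boundary point I would flatten $\partial\Omega$ as in Step~3 of the proof of Proposition~\ref{prop:bootstrap step for E}, now using $\partial\Omega\in\Cl^{N,1}$ so that $\nabla\Phi\in W^{N,\infty}$ and the transformed coefficients $\tilde\varepsilon,\tilde\mu$ remain in $W^{N,p}$ with \eqref{eq:Hyp-Ellip} preserved. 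In flat coordinates the boundary condition reads $E_1=G_1,\,E_2=G_2$ on $\{x_3=0\}$, which is stable under the tangential derivatives $\partial_j$, $j=1,2$: thus $(\partial_j E,\partial_j H)$ solves a Maxwell system of the same form with boundary datum $\partial_j G\in W^{N-1,p}$, modified sources as above, and normal trace $\partial_j(J_m\cdot\nu)\in W^{N-1-\frac1p,p}(\partial\Omega)$. The order-$(N-1)$ hypothesis then yields all \emph{tangential} derivatives $\partial_1 E,\partial_2 E,\partial_1 H,\partial_2 H\in W^{N-1,p}$ near $x_0$.

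It remains to recover the \emph{normal} derivatives, which I would extract from the structural equations exactly as in Step~3 of Proposition~\ref{prop:bootstrap step for E}. The identity $\curl E=-\ii\k\mu H+J_m\in W^{N-1,p}$ expresses $\partial_3 E_1$ and $\partial_3 E_2$ through the already-controlled quantities $(\curl E)_{1,2}$ and $\partial_1 E_3,\partial_2 E_3$, placing them in $W^{N-1,p}$; taking the divergence of $\curl H=\ii\k\varepsilon E+J_e$ gives $\div(\varepsilon E)=-(\ii\k)^{-1}\div J_e\in W^{N-1,p}$, and since by \eqref{eq:Hyp-Ellip} the scalar $\mathbf{e}_3\cdot\varepsilon\mathbf{e}_3$ has real part at least $\maxa$ and is therefore invertible with $W^{N,p}$ inverse, solving for the last unknown normal derivative yields $\partial_3 E_3\in W^{N-1,p}$. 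Hence $\nabla E\in W^{N-1,p}$ near $x_0$; the analogue for $H$ uses the Neumann-type normal trace $\mu H\cdot\nu$ (cf. \eqref{eq:monk}) and the hypothesis $J_m\cdot\nu\in W^{N-\frac1p,p}(\partial\Omega)$, mirroring Proposition~\ref{prop:bootstrap step for H}. Summing the finitely many local contributions proves $E,H\in W^{N,p}\left(\Omega;\C^3\right)$, and tracking the constants through the induction and the finite covering (each step being linear, via the estimates of Propositions~\ref{prop:bootstrap step for E}--\ref{prop:bootstrap step for H} and Lemma~\ref{lem:very weak_boundary}) gives the stated bound.

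The main obstacle I anticipate is the boundary analysis: unlike tangential derivatives, the normal derivative is governed by no boundary condition, so one must verify at each order that the tangentially-differentiated data stay in the correct trace spaces and that the curl and divergence relations suffice to close the loop for the normal components---in particular that $\mathbf{e}_3\cdot\varepsilon\mathbf{e}_3$ and $\mathbf{e}_3\cdot\mu\mathbf{e}_3$ remain non-degenerate and regular enough to invert in $W^{N,p}$. A secondary point requiring care is that one cannot simply reduce to $G=0$ as in the first-order case, since that would demand $\curl G\in W^{N,p}$; instead $G$ must be carried along as a genuine boundary datum, its tangential derivatives furnishing the boundary data for the differentiated system.
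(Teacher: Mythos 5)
Your proposal follows essentially the same route as the paper's proof: induction on $N$ with Theorem~\ref{thm:W^1,p for E and H} as the base case, flattening the boundary via a $\Cl^{N,1}$ chart, applying the order-$(N-1)$ statement to the tangentially differentiated system (with sources $\partial_i J_e+\ii\k(\partial_i\varepsilon)E$ and $\partial_i J_m-\ii\k(\partial_i\mu)H$ and boundary datum $\partial_i G$), and then recovering the normal derivatives by the curl/divergence structural argument of Step~3 of Proposition~\ref{prop:bootstrap step for E}. Your supplementary observations---the algebra property of $W^{N-1,p}$ for $p>3$, the invertibility of $\mathbf{e}_3\cdot\varepsilon\mathbf{e}_3$ in $W^{N,p}$, and the fact that one cannot reduce to $G=0$ at order $N$---correctly fill in details the paper leaves implicit.
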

\begin{proof}
The proof is done by induction. Theorem~\ref{thm:W^1,p for E and H}
corresponds to $N=1$. Assume that for some $N\geq2$, Theorem~\ref{thm:higher regularity-new}
holds for $N-1$.

For simplicity, we shall consider \eqref{eq:maxwell} in its strong
form, but every step can be made rigorous by passing to the suitable
weak formulation.

By using a change of coordinates as in the proof of Proposition \ref{prop:bootstrap step for E},
we can assume without loss of generality that $\Omega\cap B(0,R)=\{x\cdot\mathbf{e}_{3}<0\}\cap B(0,R)$.
Indeed, the assumption $\partial\Omega\in\Cl^{N,1}$ implies that
the regularity assumptions on the coefficients and on the source terms
and the conditions $E,H\in W^{N,p}$ are insensitive to a $\Cl^{N,1}$
change of coordinates.

For $i=1,2$ we have 
\[
\left\{ \begin{array}{l}
\curl\,\partial_{i}H=\ii\k\varepsilon\partial_{i}E+J_{e}'\qquad\mbox{ in }\Omega\cap B(0,R),\\
\curl\,\partial_{i}E=-\ii\k{\mu}\partial_{i}H+J_{m}'\qquad\mbox{ in }\Omega\cap B(0,R),\\
\partial_{i}E\times\mathbf{e}_{3}=\partial_{i}G\times\mathbf{e}_{3}\mbox{ on }\partial\Omega\cap\{x\cdot\mathbf{e}_{3}=0\}\cap\bar{B}(0,R),
\end{array}\right.
\]
 where $J_{e}'=J_{e}+\ii\k(\partial_{i}\varepsilon)E$ and $J_{m}'=J_{m}-\ii\k(\partial_{i}\mu)H$.
By assumption, we have $E,H\in W^{N-1,p}\left(\Omega;\mathbb{C}^{3}\right)$,
therefore $J_{e}',J_{m}'\in W^{N-1,p}\left(\div,\Omega\right)$ and
$J_{m}'\cdot\nu\in W^{N-1-\frac{1}{p},p}(\partial\Omega;\C)$. Applying
Theorem~\ref{thm:higher regularity-new} with $N-1$ in lieu of $N$
to the above system shows that $\partial_{i}E,\partial_{i}H\in W^{N-1,p}\left(\Omega;\mathbb{C}^{3}\right)$.

An argument similar to the one given in the third step of the proof
of Proposition~\ref{prop:bootstrap step for E} allows us to infer
that $\partial_{3}E,\partial_{3}H\in W^{N-1,p}\left(\Omega;\mathbb{C}^{3}\right)$;
as a consequence, $E,H\in W^{N,p}\left(\Omega;\mathbb{C}^{3}\right)$.
The corresponding norm estimate follows by Theorem~\ref{thm:W^1,p for E and H}
and the argument given above. 
\end{proof}

\section{\label{sec:Bi-anisotropic} Bi-anisotropic materials}

In this section, we investigate the interior regularity of the solutions
of the following problem 
\begin{equation}
\left\{ \begin{array}{l}
\curl H=\ii\k\left(\varepsilon E+\xi H\right)+J_{e}\qquad\mbox{ in }\Omega,\\
\curl E=-\ii\k\left(\zeta E+\mu H\right)+J_{m}\qquad\mbox{ in }\Omega.
\end{array}\right.\label{eq:maxwell for bi-anisotropic-1}
\end{equation}
 As far as the authors are aware, this question was previously studied
only recently in \cite{FERNANDES-2012}, where the parameters are
assumed to be at least Lipschitz continuous. In this more general
context, hypothesis \eqref{eq:Hyp-Ellip} is not sufficient to ensure
ellipticity. As we will see in Proposition~\ref{prop:coupled_system-bianosotropic},
the leading order parameter for the coupled elliptic system is the
tensor 
\begin{equation}
A=A_{ij}^{\alpha\beta}=\left[\begin{array}{cccc}
\Re\varepsilon & -\Im\varepsilon & \Re\xi & -\Im\xi\\
\Im\varepsilon & \Re\varepsilon & \Im\xi & \Re\xi\\
\Re\zeta & -\Im\zeta & \Re{\mu} & -\Im{\mu}\\
\Im\zeta & \Re\zeta & \Im{\mu} & \Re{\mu}
\end{array}\right],\label{eq:matrix A}
\end{equation}
where the Latin indices $i,j=1,\dots,4$ identify the different $3\times3$
block sub-matrices, whereas the Greek letters $\alpha,\beta=1,2,3$
span each of these $3\times3$ block sub-matrices. We assume that
$A$ is in $L^{\infty}(\Omega;\mathbb{R})^{12\times12}$ and satisfies
a strong Legendre condition (as in \cite{CHEN-WU-1998,GIAQUINTA-MARTINAZZI-2005}),
that is, there exists $\Lambda>0$ such that 
\begin{equation}
A_{ij}^{\alpha\beta}\ve_{\alpha}^{i}\ve_{\beta}^{j}\ge\Lambda\left|\ve\right|^{2},\;\ve\in\mathbb{R}^{12}\quad\mbox{ and}\quad\bigl|A_{ij}^{\alpha\beta}\bigr|\le\Lambda^{-1}\qquad\mbox{ a.e. in \ensuremath{\Omega}.}\label{eq:strong legendre condition}
\end{equation}
 The following result gives a sufficient condition for \eqref{eq:strong legendre condition}
to hold true. 
\begin{lem}
Assume that $\varepsilon_{0},{\mu}_{0},\kappa,\chi$ are real constants,
with $\varepsilon_{0}>0$ and $\mu_{0}>0$. Let 
\begin{equation}
\varepsilon=\varepsilon_{0}I_{3},\quad{\mu}={\mu}_{0}I_{3},\quad\xi=(\chi-\ii\kappa)I_{3},\quad\zeta=(\chi+\ii\kappa)I_{3},\label{eq:constitutive relations for chiral materials}
\end{equation}
 where $I_{3}$ is the $3\times3$ identity matrix, and construct
the matrix $A$ as in \eqref{eq:matrix A}. If 
\begin{equation}
\chi^{2}+\kappa^{2}<\varepsilon_{0}{\mu}_{0},\label{eq:chiral materials}
\end{equation}
 then $A$ satisfies \eqref{eq:strong legendre condition}. \end{lem}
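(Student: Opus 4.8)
The plan is to insert the special constitutive relations \eqref{eq:constitutive relations for chiral materials} into the definition \eqref{eq:matrix A} of $A$, observe that the resulting $12\times12$ tensor decouples into three identical copies of a single $4\times4$ matrix, and then verify positive definiteness of that $4\times4$ matrix by an elementary computation. First I would substitute the relations: since $\Im\varepsilon=\Im\mu=0$, $\Re\varepsilon=\varepsilon_0 I_3$, $\Re\mu=\mu_0 I_3$, $\Re\xi=\Re\zeta=\chi I_3$, $\Im\xi=-\kappa I_3$ and $\Im\zeta=\kappa I_3$, every $3\times3$ block $A_{ij}$ of $A$ is a real scalar multiple of $I_3$, say $A_{ij}=M_{ij}I_3$, with
\[
M=\begin{pmatrix}\varepsilon_0 & 0 & \chi & \kappa\\ 0 & \varepsilon_0 & -\kappa & \chi\\ \chi & -\kappa & \mu_0 & 0\\ \kappa & \chi & 0 & \mu_0\end{pmatrix}.
\]
Consequently $A_{ij}^{\alpha\beta}=M_{ij}\delta^{\alpha\beta}$, so if I group $\ve\in\R^{12}$ into the three vectors $v_\alpha=(\ve_\alpha^{1},\ve_\alpha^{2},\ve_\alpha^{3},\ve_\alpha^{4})\in\R^{4}$, $\alpha=1,2,3$, then $A_{ij}^{\alpha\beta}\ve_\alpha^{i}\ve_\beta^{j}=\sum_{\alpha=1}^{3}v_\alpha^{T}Mv_\alpha$ while $|\ve|^2=\sum_{\alpha=1}^{3}|v_\alpha|^2$. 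Hence \eqref{eq:strong legendre condition} reduces to showing that $M$ is symmetric, positive definite, and has bounded entries.

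Symmetry is immediate by inspection. For the coercivity I would write $M$ in the block form $M=\left(\begin{smallmatrix}\varepsilon_0 I_2 & S\\ S^{T} & \mu_0 I_2\end{smallmatrix}\right)$ with $S=\left(\begin{smallmatrix}\chi & \kappa\\ -\kappa & \chi\end{smallmatrix}\right)$, and note that $S^{T}S=(\chi^2+\kappa^2)I_2$. Since $\varepsilon_0>0$, positive definiteness of $M$ is equivalent to positive definiteness of the Schur complement
\[
\mu_0 I_2-S^{T}(\varepsilon_0 I_2)^{-1}S=\Bigl(\mu_0-\tfrac{\chi^2+\kappa^2}{\varepsilon_0}\Bigr)I_2,
\]
which holds precisely when $\chi^2+\kappa^2<\varepsilon_0\mu_0$, i.e. under \eqref{eq:chiral materials}. (Equivalently, setting $z=\ve^{1}+\ii\,\ve^{2}$, $w=\ve^{3}+\ii\,\ve^{4}$ and $s=\chi-\ii\kappa$ gives $v^{T}Mv=\varepsilon_0|z|^2+\mu_0|w|^2+2\Re(s\bar z w)$, and $|2\Re(s\bar z w)|\le 2\sqrt{\chi^2+\kappa^2}\,|z||w|$ shows that $v^{T}Mv$ bounds below the $2\times2$ form $\varepsilon_0|z|^2+\mu_0|w|^2-2\sqrt{\chi^2+\kappa^2}|z||w|$, which is positive definite exactly under \eqref{eq:chiral materials}.)

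Finally I would record an explicit admissible constant: letting $\lambda_{\min}>0$ denote the least eigenvalue of the symmetric positive definite matrix $M$, and observing that all entries of $M$ are bounded in absolute value by $\max(\varepsilon_0,\mu_0,|\chi|,|\kappa|)$, the choice $\Lambda=\min\bigl(\lambda_{\min},\max(\varepsilon_0,\mu_0,|\chi|,|\kappa|)^{-1}\bigr)$ satisfies both inequalities in \eqref{eq:strong legendre condition} simultaneously. No step here presents a genuine difficulty; the only point requiring a little care is the bookkeeping of the decoupling $A_{ij}^{\alpha\beta}=M_{ij}\delta^{\alpha\beta}$ and the selection of a single $\Lambda$ serving both the coercivity estimate and the upper bound on the entries.
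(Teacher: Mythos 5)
Your proof is correct and takes essentially the same approach as the paper: both reduce the strong Legendre condition to the positive definiteness of the constant symmetric matrix $A$, which the paper verifies by computing its smallest eigenvalue $\bigl(\varepsilon_{0}+\mu_{0}-((\varepsilon_{0}-\mu_{0})^{2}+4\chi^{2}+4\kappa^{2})^{1/2}\bigr)/2$ explicitly, while you verify it through the decoupling $A_{ij}^{\alpha\beta}=M_{ij}\delta^{\alpha\beta}$ and the Schur complement of the $4\times4$ block $M$ --- equivalent elementary computations yielding the same condition $\chi^{2}+\kappa^{2}<\varepsilon_{0}\mu_{0}$. If anything, your write-up is slightly more complete, since you also address the bound $|A_{ij}^{\alpha\beta}|\le\Lambda^{-1}$ and exhibit a single admissible constant $\Lambda$, points the paper's one-line proof leaves implicit.
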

\begin{rem}
This result shows that a wide class of materials satisfy the strong
Legendre condition \eqref{eq:strong legendre condition}. Considering
for simplicity the case of constant and isotropic parameters, the
constitutive relations given in \eqref{eq:constitutive relations for chiral materials}
describe the so-called chiral materials. It turns out that \eqref{eq:chiral materials}
is satisfied for natural materials \cite{WANG-ET-AL-2009}. \end{rem}
\begin{proof}
A direct calculation shows that the smallest eigenvalue of $A$ is
$(\varepsilon_{0}+{\mu}_{0}-(\varepsilon_{0}^{2}-2\varepsilon_{0}{\mu}_{0}+{\mu}_{0}^{2}+4\chi^{2}+4\kappa^{2})^{1/2})/2$,
which is strictly positive since $\chi^{2}+\kappa^{2}<\varepsilon_{0}{\mu}_{0}$.
\end{proof}
We now give the regularity assumptions on the parameters. In contrast
to the previous situation, here the mixing coefficients $\xi$ and
$\zeta$ fully couple electric and magnetic properties. We are thus
led to assume that 
\begin{equation}
\varepsilon,\xi,\mu,\zeta\in W^{1,3+\delta}\left(\Omega;\mathbb{C}^{3\times3}\right)\mbox{ for some }\delta>0.\label{eq:Hyp2-epsilon-xi-mu-zeta}
\end{equation}
 The theorem below shows that at least as far as interior regularity
is concerned, Theorem~\ref{thm:W^1,p for E and H} also applies in
this more general setting. 
\begin{thm}
\label{thm:C 0 alpha -bi-anisotropic} Assume that \eqref{eq:strong legendre condition}
and \eqref{eq:Hyp2-epsilon-xi-mu-zeta} hold. Suppose that the current
sources $J_{e}$ and $J_{m}$ are in $W^{1,p}(\div,\Omega)$ for some
$p\geq2$.

If $E$ and $H$ in $\Hcurl$ are weak solutions of \eqref{eq:maxwell for bi-anisotropic-1},
then $E,H\in W_{loc}^{1,q}(\Omega;\C^{3})$ with $q=\min(p,3+\delta)$.
Furthermore, for any open set $\Omega_{0}$ such that $\overline{\Omega_{0}}\subset\Omega$
there holds 
\[
\left\Vert E\right\Vert _{W^{1,q}\left(\Omega_{0};\C^{3}\right)}+\left\Vert H\right\Vert _{W^{1,q}\left(\Omega_{0};\C^{3}\right)}\le C\bigl(\left\Vert E\right\Vert _{L^{2}\left(\Omega\right)}+\left\Vert H\right\Vert _{L^{2}\left(\Omega\right)}+\left\Vert (J_{e},J_{m})\right\Vert _{W^{1,p}(\div,\Omega)^{2}}\bigr),
\]
 where $C$ is a constant depending on $\Omega,\Omega_{0},q,\Lambda,\k,$
and the $W^{1,3+\delta}\left(\Omega;\mathbb{C}^{3\times3}\right)$
norms of $\varepsilon$ \textup{, $\mu$, $\xi$ and $\zeta$.} In
particular, if $p>3$ then $E,H\in\Cl_{loc}^{0,1-\frac{3}{q}}(\Omega;\C^{3})$. 
\end{thm}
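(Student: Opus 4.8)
The plan is to mirror the architecture used for the anisotropic case (Theorems~\ref{thm:H1 for E}--\ref{thm:W^1,p for E and H}), but with the two fields $E$ and $H$ now coupled at leading order through the single $12\times12$ tensor $A$ defined in \eqref{eq:matrix A}. First I would establish the bi-anisotropic analogue of Proposition~\ref{prop:coupled_system}, announced as Proposition~\ref{prop:coupled_system-bianosotropic}: taking curl--div combinations of \eqref{eq:maxwell for bi-anisotropic-1} component by component, as in Leis~\cite{LEIS-1986}, one derives that the twelve real scalar unknowns $(\Re E_\alpha,\Im E_\alpha,\Re H_\alpha,\Im H_\alpha)$ solve, in the very weak sense, a second order elliptic \emph{system} $-\operatorname{div}(A\nabla U)=\operatorname{div}(\text{lower order data})$ whose principal part is governed precisely by $A$. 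The forcing involves the first derivatives of $\varepsilon,\xi,\mu,\zeta$ multiplying the fields, together with the currents $J_e,J_m$ and their divergences. The key point is that condition \eqref{eq:strong legendre condition} guarantees the strong Legendre ellipticity needed to run linear elliptic $L^p$ theory for systems. Because only interior regularity is claimed, I would deliberately drop all boundary terms by localising with a cut-off $\chi\in C_0^\infty(\Omega;\R)$ exactly as in Step~2 of the proof of Proposition~\ref{prop:bootstrap step for E}, so that no Neumann/Dirichlet boundary analysis is required.

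Next I would prove the interior bootstrap proposition playing the role of Propositions~\ref{prop:bootstrap step for E} and~\ref{prop:bootstrap step for H} simultaneously. Assuming $E,H\in L^q$ for some $q\geq 2$, the localised forcing lies in $L^{r}$ with the same gain exponent $r=\min\bigl((3q+q\delta)(q+3+\delta)^{-1},p\bigr)$, since the worst term is $(\partial\varepsilon)E$-type and $\partial(\varepsilon,\xi,\mu,\zeta)\in L^{3+\delta}$ by \eqref{eq:Hyp2-epsilon-xi-mu-zeta}. The systems analogue of Lemma~\ref{lem:very weak_boundary}, via the $L^p$ regularity for divergence-form elliptic systems with $W^{1,3}$ coefficients (the result of Auscher \& Qafsaoui~\cite{AUSCHER-QAFSAOUI-2002} cited there applies to systems under the strong Legendre condition), then upgrades $U\in W^{1,r}_{loc}$ together with the corresponding local estimate. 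Iterating and invoking Sobolev embeddings, exactly as in the proof of Theorem~\ref{thm:W^1,p for E and H}, the exponent increases by a fixed amount at each step and reaches $q=\min(p,3+\delta)$ in finitely many iterations; the Hölder conclusion $E,H\in \Cl^{0,1-3/q}_{loc}$ when $p>3$ follows from Morrey embedding. The covering argument over compactly contained subdomains $\Omega_0$ yields the stated norm bound.

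The main obstacle, and the only genuine departure from the earlier sections, is replacing the scalar elliptic duality argument of Lemma~\ref{lem:very weak_boundary} by its \emph{system} counterpart. In the scalar case one solves the auxiliary adjoint equation $\operatorname{div}(\varepsilon^T\nabla\overline{\varphi^*})=\partial_i\psi$ and uses scalar $W^{1,q}$ regularity \eqref{eq:q bound}; here one must solve the adjoint system $-\operatorname{div}(A^T\nabla\Psi)=\nabla\psi$ and quote $L^p$ estimates for the transposed system. Since $A$ need not be symmetric, one has to verify that $A^T$ also satisfies a strong Legendre condition with the same constant $\Lambda$---which is immediate because $\xi^\top U\cdot U=U\cdot\xi U$ for the real quadratic form, so transposition preserves \eqref{eq:strong legendre condition}. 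The regularity $A\in W^{1,3+\delta}$ supplies the $W^{1,3}$ coefficient hypothesis required by \cite{AUSCHER-QAFSAOUI-2002} for systems. Once this systems version of Lemma~\ref{lem:very weak_boundary} is in hand, the remainder is a faithful transcription of the interior steps from \S~\ref{sec:Main-H1-Ca}, and I would present the full argument in the appendix as the paper indicates.
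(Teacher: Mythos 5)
Your proposal is correct and follows essentially the same route as the paper's appendix: the very weak formulation for the real $12$-dimensional system governed by $A$ (Proposition~\ref{prop:coupled_system-bianosotropic}), localisation by a cut-off so that only interior estimates and no boundary terms are needed, a systems version of Lemma~\ref{lem:very weak_boundary} proved by solving an adjoint Dirichlet system and invoking $W^{1,q}$ theory for divergence-form elliptic systems with $W^{1,3}\subset\mathrm{VMO}$ coefficients under the strong Legendre condition, and then the same bootstrap as in Theorem~\ref{thm:W^1,p for E and H}. The only differences are cosmetic: for the systems estimate the paper cites \cite[Theorem 1.7, Remark 1.8]{BYUN-WANG-2008} rather than \cite{AUSCHER-QAFSAOUI-2002}, and the transposition issue you correctly identify is handled in the paper by writing the very weak identity \eqref{eq:very weak systems} directly against the adjoint operator $\partial_{\alpha}(A_{ij}^{\alpha\beta}\partial_{\beta}\varphi^{i})$.
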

We did not investigate the regularity up to the boundary in this problem
for two reasons. From a modeling point of view, the natural (or pertinent)
boundary conditions to be considered in this case are not completely
clear. There is also a technical reason: the boundary terms are a
rather intricate mix of Neumann and Dirichlet type terms on both $E$
and $H$, and the correct space of test functions to consider is not
readily apparent (see Proposition~\ref{prop:coupled_system-bianosotropic}).

The proof of this result is a variant of the proof of Theorem~\ref{thm:W^1,p for E and H}.
In this case, the system is written in $\mathbb{R}^{12}$ (instead
of a weakly coupled system of $6$ complex unknowns) and the proof
is detailed in the appendix.

\section{\label{sec:Campanato} Proof of Theorem~\ref{thm:global campanato for E}
using Campanato estimates}

The purpose of this section is to prove Theorem~\ref{thm:global campanato for E}.
We shall apply classical Campanato estimates for elliptic equations
to \eqref{eq:a}, namely the elliptic equations satisfied by $E$.
We first state the properties of Campanato spaces that we shall use,
and then proceed to the proof of Theorem~\ref{thm:global campanato for E}.

For $\lambda\ge0$ and $p\ge1$ we denote the Campanato space by $L^{p,\lambda}(\Omega;\C)$
\cite{CAMPANATO-1980}, namely the Banach space of functions $u\in L^{p}\left(\Omega;\C\right)$
such that 
\[
[u]_{p,\lambda;\Omega}^{p}:=\sup_{x\in\Omega,0<\rho<{\rm diam}\Omega}\rho^{-\lambda}\int_{\Omega(x,\rho)}\left|u(y)-\frac{1}{|\Omega(x,\rho)|}\int_{\Omega(x,\rho)}u(z)\, dz\right|^{p}\, dy<\infty,
\]
 where $\Omega(x,\rho)=\Omega\cap\{y\in\mathbb{R}^{3}:\left|y-x\right|<\rho\}$,
equipped with the norm 
\[
\left\Vert u\right\Vert _{L^{p,\lambda}(\Omega;\C)}=\left\Vert u\right\Vert _{L^{p}(\Omega;\C)}+[u]_{p,\lambda;\Omega}.
\]

\begin{lem}
\label{lem:campanato_properties}Take $\lambda\ge0$. 
\begin{enumerate}
\item Suppose $\lambda>3$. If $u\in L^{2,\lambda}\left(\Omega;\C\right)$
then $u\in\Cl^{0,\frac{\lambda-3}{2}}\left(\overline{\Omega};\C\right)$,
and the embedding is continuous. 
\item Suppose $\lambda<3$. If $u\in L^{2}(\Omega;\C)$ and $\nabla u\in L^{2,\lambda}\left(\Omega;\C^{3}\right)$
then $u\in L^{2,2+\lambda}\left(\Omega;\C\right)$, and the embedding
is continuous. 
\item Suppose $\delta>0$ and $\lambda\neq1$. If $f\in L^{3+\delta}\left(\Omega;\C\right)$
and $u\in L^{2}\left(\Omega;\C\right)$ with $\nabla u\in L^{2,\lambda}\left(\Omega;\C^{3}\right)$
then $fu\in L^{2,\lambda'}\left(\Omega;\C\right)$ with $\lambda'=\min(\lambda+2\delta(3+\delta)^{-1},3(1+\delta)(3+\delta)^{-1})$,
and the embedding is continuous. 
\end{enumerate}
\end{lem}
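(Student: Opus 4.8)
The plan is to treat the three statements as a graded family of Campanato-space facts, all resting on the geometric property that, because $\partial\Omega$ is $\Cl^{1,1}$ (hence Lipschitz), the sets $\Omega(x,\rho)$ are uniformly Lipschitz subdomains satisfying the measure-density bounds $c_0\rho^3\le|\Omega(x,\rho)|\le c_1\rho^3$ for $x\in\overline\Omega$ and $0<\rho<{\rm diam}\,\Omega$. Throughout I write $u_{x,\rho}=|\Omega(x,\rho)|^{-1}\int_{\Omega(x,\rho)}u\,dy$ for the average, so that the defining seminorm controls $\int_{\Omega(x,\rho)}|u-u_{x,\rho}|^2\,dy\le[u]_{2,\lambda;\Omega}^2\,\rho^\lambda$. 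Items (1) and (2) are the classical Campanato embeddings \cite{CAMPANATO-1980}, which I would record with short proofs, and (3) is a product estimate, which is the only part requiring real work.

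For (1) ($\lambda>3$) I would run the standard telescoping-averages argument: the measure-density bound and Hölder's inequality give $|u_{x,\rho}-u_{x,\rho/2}|\le C\rho^{(\lambda-3)/2}[u]_{2,\lambda;\Omega}$, so $\{u_{x,\rho}\}_\rho$ is Cauchy as $\rho\to0$ and its limit defines a representative $\tilde u$; comparing the averages of $u$ at two points $x,y$ over a common ball of radius $\sim|x-y|$ yields $|\tilde u(x)-\tilde u(y)|\le C|x-y|^{(\lambda-3)/2}[u]_{2,\lambda;\Omega}$, i.e. the continuous embedding into $\Cl^{0,(\lambda-3)/2}(\overline\Omega;\C)$. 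For (2) ($\lambda<3$) I would use that for $\lambda<3$ the Campanato space coincides with the Morrey space, so $\nabla u\in L^{2,\lambda}$ supplies the Morrey bound $\int_{\Omega(x,\rho)}|\nabla u|^2\le C\rho^\lambda$; feeding this into the (uniformly valid) Poincaré inequality $\int_{\Omega(x,\rho)}|u-u_{x,\rho}|^2\le C\rho^2\int_{\Omega(x,\rho)}|\nabla u|^2$ gives $\int_{\Omega(x,\rho)}|u-u_{x,\rho}|^2\le C\rho^{2+\lambda}$, which is exactly $u\in L^{2,2+\lambda}$.

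The core is (3), where I would estimate the oscillation of $fu$ by comparing it to the constant $f_{x,\rho}u_{x,\rho}$ and splitting $fu-f_{x,\rho}u_{x,\rho}=f(u-u_{x,\rho})+(f-f_{x,\rho})\,u_{x,\rho}$. Two ingredients drive the bounds: from $f\in L^{3+\delta}$, Hölder gives the Morrey estimate $\int_{\Omega(x,\rho)}|f|^2\le C\rho^{\mu_0}$ with $\mu_0=3(1+\delta)(3+\delta)^{-1}$ (and the same bound for $|f-f_{x,\rho}|^2$), and from $\nabla u\in L^{2,\lambda}$ the Sobolev--Poincaré inequality on $\Omega(x,\rho)$ yields $\bigl(\int_{\Omega(x,\rho)}|u-u_{x,\rho}|^6\bigr)^{1/3}\le C\rho^\lambda$. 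For the first term I would interpolate this $L^6$ bound down to the exponent $2t=2(3+\delta)(1+\delta)^{-1}$ dual to $f$ via Hölder, the measure-density bound turning the loss into exactly the factor $\rho^{2\delta(3+\delta)^{-1}}$, so that $\int_{\Omega(x,\rho)}|f|^2|u-u_{x,\rho}|^2\le C\rho^{\lambda+2\delta(3+\delta)^{-1}}$. For the second term I would combine $\int|f-f_{x,\rho}|^2\le C\rho^{\mu_0}$ with a bound on $|u_{x,\rho}|$: when $\lambda>1$, items (1)--(2) make $u$ bounded, giving $\le C\rho^{\mu_0}$, while when $\lambda<1$ the telescoping bound $|u_{x,\rho}|\le C\rho^{(\lambda-1)/2}$ gives $\le C\rho^{\lambda+2\delta(3+\delta)^{-1}}$. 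Since $\lambda+2\delta(3+\delta)^{-1}$ and $\mu_0$ compare oppositely across $\lambda=1$, both contributions are $\le C\rho^{\lambda'}$ in each regime.

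I expect this second term to be the main obstacle, and it is precisely there that the hypothesis $\lambda\neq1$ enters: at $\lambda=1$ item (2) only places $u$ in the borderline space $L^{2,3}$ (of BMO type), where $|u_{x,\rho}|$ may grow logarithmically, so one recovers $\rho^{\lambda'-\epsilon}$ for every $\epsilon>0$ but not the endpoint $\rho^{\lambda'}$; excluding $\lambda=1$ removes this logarithm. The remaining care is bookkeeping: checking that the Poincaré and Sobolev--Poincaré constants are uniform in $x$ and $\rho$ (which is where the $\Cl^{1,1}$ boundary is used) and verifying that the exponent arithmetic collapses to $\lambda'=\min(\lambda+2\delta(3+\delta)^{-1},\,3(1+\delta)(3+\delta)^{-1})$.
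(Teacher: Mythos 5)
Your proof is correct, but it takes a more self-contained route than the paper, whose own proof is essentially citation-based: statements (1) and (2) are quoted from Troianiello's book without proof; for (3) the authors note, exactly as you do, that H\"older's inequality gives $f\in L^{2,3(1+\delta)(3+\delta)^{-1}}$, and then they split into two cases, citing Lemma~4.1 of Di Fazio for $\lambda<1$ and deducing the case $\lambda>1$ from (1) and (2) (which make $u$ bounded and H\"older continuous, after which the product estimate is immediate). What you do genuinely differently is to prove the $\lambda<1$ case rather than outsource it: your splitting $fu-f_{x,\rho}u_{x,\rho}=f(u-u_{x,\rho})+(f-f_{x,\rho})u_{x,\rho}$, with the Sobolev--Poincar\'e $L^{6}$ bound interpolated against $f\in L^{3+\delta}$ for the first term and the telescoping bound $|u_{x,\rho}|\le C\rho^{(\lambda-1)/2}$ for the second, amounts to a direct proof of the Di Fazio-type product lemma, and it buys a uniform treatment of both regimes in which only the estimate of $|u_{x,\rho}|$ changes across $\lambda=1$. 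Your exponent arithmetic checks out: $(\lambda-1)+3(1+\delta)(3+\delta)^{-1}=\lambda+2\delta(3+\delta)^{-1}$, which lies below $3(1+\delta)(3+\delta)^{-1}$ precisely when $\lambda<1$, so both terms land on $\rho^{\lambda'}$ in each regime; and your diagnosis of why $\lambda=1$ must be excluded (logarithmic growth of the averages $u_{x,\rho}$ in the borderline space $L^{2,3}$) is the correct one. Two small items of bookkeeping to record if this is written out in full: the Campanato norm also requires $fu\in L^{2}(\Omega;\C)$, which follows from $u\in H^{1}(\Omega)\subset L^{6}$ and $f\in L^{3+\delta}$; and for $\lambda\ge3$ the Morrey exponent of $\nabla u$ saturates at $3$, but the resulting exponents still dominate $\lambda'$, so the statement's full range $\lambda\in[0,\infty)\setminus\{1\}$ is covered.
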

\begin{proof}
Statements (1) and (2) are classical, see e.g. \cite[Chapter 1]{TROIANIELLO-1987}.
For (3), note that Hölder's inequality implies that $f\in L^{2,3(1+\delta)(3+\delta)^{-1}}(\Omega;\C)$.
When $\lambda<1$, the result follows from \cite[Lemma 4.1]{DIFAZIO-1993}.
When $\lambda>1$, (3) follows from (1) and (2).
\end{proof}
We now state the regularity result regarding Campanato estimates we
will use. It can be found in \cite{TROIANIELLO-1987} (Theorems 2.19
and 3.16). 
\begin{prop}
\label{prop:campanato_regularity} Assume \eqref{eq:Hyp-Ellip} and
$\Im{\mu}=0$. There exists $\lambda_{\mu}\in(1,2]$ depending only
on $\Omega$ and on $\Lambda$ given in \eqref{eq:Hyp-Ellip}, such
that if $F\in L^{2,\lambda}\left(\Omega;\mathbb{C}^{3}\right)$ for
some $\lambda\in[0,\lambda_{{\mu}})$, and $u\in W^{1,2}\left(\Omega;\mathbb{C}\right)$
satisfies 
\[
\left\{ \begin{array}{l}
-\div({\mu}\nabla u)=\div(F)\qquad\mbox{ in }\Omega,\\
{\mu}\nabla u\cdot\nu=F\cdot\nu\qquad\mbox{ on }\partial\Omega,
\end{array}\right.
\]
 then $\nabla u\in L^{2,\lambda}\left(\Omega;\mathbb{C}^{3}\right)$
and 
\begin{equation}
\left\Vert \nabla u\right\Vert _{L^{2,\lambda}\left(\Omega;\mathbb{C}^{3}\right)}\le C\left\Vert F\right\Vert _{L^{2,\lambda}\left(\Omega;\mathbb{C}^{3}\right)},\label{eq:estimate campanato}
\end{equation}
 where the constant $C$ depends only on $\Lambda,\lambda$ and $\Omega$.

Alternatively, assume \eqref{eq:Hyp-Ellip} and \eqref{eq:Hyp2-mu}.
For all $\lambda\in[0,2]$, if $F\in L^{2,\lambda}\left(\Omega;\mathbb{C}^{3}\right)$,
$f\in L^{2}\left(\Omega;\mathbb{C}\right)$, and $u\in W^{1,2}\left(\Omega;\mathbb{C}\right)$
satisfies 
\[
\left\{ \begin{array}{l}
-\div({\mu}\nabla u)=\div(F)+f\qquad\mbox{ in }\Omega,\\
u=0\qquad\mbox{ on }\partial\Omega,
\end{array}\right.
\]
 then $\nabla u\in L^{2,\lambda}\left(\Omega;\mathbb{C}^{3}\right)$
and 
\begin{equation}
\left\Vert \nabla u\right\Vert _{L^{2,\lambda}\left(\Omega;\mathbb{C}^{3}\right)}\le C\left(\left\Vert F\right\Vert _{L^{2,\lambda}\left(\Omega;\mathbb{C}^{3}\right)}+\left\Vert f\right\Vert _{L^{2}\left(\Omega;\mathbb{C}\right)}\right),\label{eq:estimate campanato-1}
\end{equation}
 where the constant $C$ depends on $\Lambda$, $\Omega$, and $\left\Vert \mu\right\Vert _{W^{1,3+\delta}\left(\Omega;\mathbb{C}^{3\times3}\right)}$
only. 
\end{prop}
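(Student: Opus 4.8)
The plan is to run the classical Campanato--Morrey perturbation scheme: on each ball I compare $u$ with the solution of a simpler (homogeneous, or frozen-coefficient) auxiliary problem, derive a decay inequality for an appropriate energy, and iterate. Since $\lambda_{\mu}\le2<3$ throughout, it suffices to control a Morrey-type quantity of the form $\rho^{-\lambda}\int_{B_{\rho}(x_{0})\cap\Omega}|\nabla u|^{2}$ (or its mean-oscillation variant), which for $\lambda\in[0,3)$ is equivalent to the Campanato seminorm $[\nabla u]_{2,\lambda;\Omega}$; recovering \eqref{eq:estimate campanato} and \eqref{eq:estimate campanato-1} is then immediate. The engine is the standard iteration lemma (see \cite[Ch.~1]{TROIANIELLO-1987} or \cite{GIAQUINTA-MARTINAZZI-2005}): if a nonnegative nondecreasing $\Psi$ satisfies $\Psi(\rho)\le C_{0}\bigl[\bigl((\rho/R)^{a}+\epsilon\bigr)\Psi(R)+BR^{b}\bigr]$ for all $0<\rho\le R\le R_{0}$ with $b<a$ and $\epsilon$ small enough (depending on $a,b$), then $\Psi(\rho)\le C_{1}\bigl[(\rho/R)^{b}\Psi(R)+B\rho^{b}\bigr]$, hence $\Psi(\rho)\le C\rho^{b}$.

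For the first statement I set $\Psi(\rho)=\int_{B_{\rho}(x_{0})\cap\Omega}|\nabla u|^{2}$ and split $u=v+w$ on each ball, where $v$ solves the homogeneous equation $\div(\mu\nabla v)=0$ with $v=u$ on $\partial(B_{R}\cap\Omega)$ and $w=u-v$. The crucial point is that $\mu$ is real, so (separating real and imaginary parts of $u$) the equation becomes two real uniformly elliptic scalar divergence-form equations, to which De Giorgi--Nash--Moser applies: $v$ is Hölder continuous with exponent $\alpha=\alpha(\Omega,\Lambda)>0$, yielding the Morrey decay $\int_{B_{\rho}}|\nabla v|^{2}\le C(\rho/R)^{1+2\alpha}\int_{B_{R}}|\nabla v|^{2}$ (the exponent $n-2+2\alpha$ in dimension $n=3$). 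Setting $\lambda_{\mu}=\min(1+2\alpha,2)\in(1,2]$, using the energy bound $\int_{B_{R}}|\nabla w|^{2}\le C\int_{B_{R}}|F|^{2}\le CR^{\lambda}[F]_{2,\lambda;\Omega}^{2}$ together with $\int_{B_{R}}|\nabla v|^{2}\le 2\int_{B_{R}}|\nabla u|^{2}+2\int_{B_{R}}|\nabla w|^{2}$, one gets $\Psi(\rho)\le C\bigl[(\rho/R)^{\lambda_{\mu}}\Psi(R)+R^{\lambda}\|F\|_{L^{2,\lambda}}^{2}\bigr]$ (here $\epsilon=0$, no freezing). The iteration lemma then gives $\Psi(\rho)\le C\rho^{\lambda}$ for every $\lambda<\lambda_{\mu}$, which is \eqref{eq:estimate campanato}; note the constant depends only on $\Lambda,\lambda,\Omega$ because the De Giorgi--Nash--Moser exponent depends only on the ellipticity.

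For the second statement $\mu$ satisfies \eqref{eq:Hyp2-mu}, hence $\mu\in\Cl^{0,\delta/(3+\delta)}(\overline{\Omega})$ by Sobolev embedding, and I \emph{freeze} the coefficient. Comparing $u$ with the solution $v$ of the constant-coefficient problem $\div(\mu(x_{0})\nabla v)=0$ sharing the boundary data of $u$, the constant-coefficient theory gives the full mean-oscillation decay $\int_{B_{\rho}}|\nabla v-(\nabla v)_{B_{\rho}}|^{2}\le C(\rho/R)^{5}\int_{B_{R}}|\nabla v-(\nabla v)_{B_{R}}|^{2}$. The corrector $w=u-v$ solves $-\div(\mu(x_{0})\nabla w)=\div\bigl[(\mu-\mu(x_{0}))\nabla u\bigr]+\div F+f$ with zero boundary data, and $\int_{B_{R}}|\nabla w|^{2}$ is bounded by the sum of the freezing error $\sup_{B_{R}}|\mu-\mu(x_{0})|^{2}\int_{B_{R}}|\nabla u|^{2}$ (an absorbable $\epsilon\,\Psi(R)$ term, small by continuity of $\mu$), the term $CR^{\lambda}\|F\|_{L^{2,\lambda}}^{2}$ from $F$, and, crucially, $CR^{2}\|f\|_{L^{2}(\Omega)}^{2}$ from $f$ via Poincaré on the zero-boundary corrector. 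It is precisely this $R^{2}$ contribution from the merely $L^{2}$ datum $f$ that caps the attainable exponent at $\lambda=2$; feeding this into the iteration lemma with $\Psi(\rho)=\int_{B_{\rho}}|\nabla u-(\nabla u)_{B_{\rho}}|^{2}$ yields $\nabla u\in L^{2,\lambda}$ for every $\lambda\in[0,2]$ and the estimate \eqref{eq:estimate campanato-1}, with $C$ depending on $\|\mu\|_{W^{1,3+\delta}}$ through the modulus of continuity of $\mu$.

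The main obstacle is the boundary analysis, since both estimates are global. Near a point of the $\Cl^{1,1}$ boundary I would flatten $\partial\Omega$ by a $\Cl^{1,1}$ change of variables, exactly as in the proof of Proposition~\ref{prop:bootstrap step for E}; this preserves the ellipticity \eqref{eq:Hyp-Ellip} and the Sobolev class of $\mu$, and transforms the problem into a half-ball problem. One then needs the half-ball versions of the decay estimates above under the respective boundary conditions: for the natural (Neumann) condition $\mu\nabla u\cdot\nu=F\cdot\nu$ of the first statement, using even reflection of the homogeneous De Giorgi--Nash solution across the flattened boundary and checking that the boundary term $F\cdot\nu$ is consistent with the comparison function; for the homogeneous Dirichlet condition of the second statement, using odd reflection so that the constant-coefficient half-ball solution still enjoys the sharp $(\rho/R)^{5}$ decay. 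The delicate part is establishing these up-to-the-boundary decays under the correct boundary condition; once the half-ball decay inequalities are in hand, a finite covering of $\partial\Omega$ and the interior estimate combine, via the iteration lemma, to give the global bounds.
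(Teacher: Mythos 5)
The paper does not actually prove this proposition: it is quoted as a known result, with the proof delegated entirely to \cite{TROIANIELLO-1987} (Theorems 2.19 and 3.16). Your proposal therefore takes a genuinely different route, namely reconstructing the classical Campanato machinery behind those citations, and the reconstruction is essentially sound: the real-coefficient Neumann case via comparison with solutions of the homogeneous equation and De Giorgi--Nash (after decoupling real and imaginary parts, which is legitimate precisely because $\Im\mu=0$), which is exactly where $\lambda_{\mu}=\min(1+2\alpha,2)\in(1,2]$ and its dependence on only $\Omega$ and $\Lambda$ come from; and the Dirichlet case via freezing the coefficient (H\"older continuous by Sobolev embedding of $W^{1,3+\delta}$), with the $R^{2}\left\Vert f\right\Vert _{L^{2}}^{2}$ Poincar\'e term correctly identified as the reason the admissible range stops at $\lambda=2$. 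What your approach buys is transparency about where the exponents and the constant dependencies originate; what the paper's citation buys is brevity and ready-made global, up-to-the-boundary statements, which are the most laborious part to rederive.

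Two caveats, neither fatal. First, in the Dirichlet step your freezing error $\sup_{B_{R}}|\mu-\mu(x_{0})|^{2}\int_{B_{R}}|\nabla u|^{2}$ controls the full local energy, not the mean-oscillation quantity $\Psi(R)=\int_{B_{R}}|\nabla u-(\nabla u)_{B_{R}}|^{2}$, so it is not literally an ``absorbable $\epsilon\Psi(R)$'' term as written. The clean fix is to drop the oscillation functional altogether: since $\lambda\le2<3$, the Campanato and Morrey scales for $\nabla u$ coincide, so you can run the entire iteration on $\int_{B_{\rho}}|\nabla u|^{2}$ using the $(\rho/R)^{3}$ energy decay of frozen-coefficient solutions, and $\min(\lambda,2)\le2<3$ still closes the iteration lemma. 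Second, the boundary work you flag as delicate is indeed where most of the labour lies, but your plan is viable: after flattening, even (conormal) and odd (Dirichlet) reflection require the standard sign modification of the normal--tangential entries of the reflected coefficient, which preserves $L^{\infty}$ ellipticity with the same constants --- the discontinuity across the interface is harmless for De Giorgi--Nash --- so the half-ball decay estimates you need do hold.
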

We first study the regularity of $H$ following a variant of an argument
given in \cite{YIN-2004}. 
\begin{prop}
\label{prop:H campanato}Assume that $\Omega$ is simply connected,
that \eqref{eq:Hyp-Ellip} holds with $\Im{\mu}=0$ and $J_{m}\in L^{2,\lambda}(\Omega)$
with $1<\lambda<\lambda_{{\mu}}$, where $\lambda_{{\mu}}$ is given
by Proposition~\ref{prop:campanato_regularity}. Let $E$ and $H$
in $\Hcurl$ be weak solutions of \eqref{eq:maxwell} with $G=0$.
Then $H\in L^{2,\lambda}\left(\Omega;\mathbb{C}^{3}\right)$ and 
\begin{equation}
\left\Vert H\right\Vert _{L^{2,\lambda}\left(\Omega;\mathbb{C}^{3}\right)}\le C\bigl(\left\Vert E\right\Vert _{L^{2}(\Omega)}+\left\Vert J_{e}\right\Vert _{L^{2}(\Omega)}+\left\Vert J_{m}\right\Vert _{L^{2,\lambda}(\Omega;\C^{3})}\bigr),\label{eq:estimate campanato H}
\end{equation}
 where the constant $C$ depends only on $\Lambda$, $\lambda$, $\k$
and $\Omega$. \end{prop}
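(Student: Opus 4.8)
The plan is to realise the Helmholtz decomposition $H=T+\nabla h$ announced in the introduction, and then apply the conormal Campanato estimate \eqref{eq:estimate campanato} of Proposition~\ref{prop:campanato_regularity}, whose validity up to the exponent $\lambda_{\mu}>1$ relies on $\Im\mu=0$. First I set $K=\curl H=\ii\k\varepsilon E+J_{e}\in L^{2}(\Omega;\C^{3})$. Since $\div K=0$ automatically and $\Omega$ is simply connected with $\Cl^{1,1}$ boundary, the classical vector-potential theory (e.g.\ \cite{AMROUCHE-BERNARDI-DAUGE-GIRAULT-1998}) provides $T\in H^{1}(\Omega;\C^{3})$ with $\div T=0$, $\curl T=K$ and $\|T\|_{H^{1}(\Omega)}\le C\|K\|_{L^{2}(\Omega)}\le C(\|E\|_{L^{2}(\Omega)}+\|J_{e}\|_{L^{2}(\Omega)})$; here the constant-coefficient elliptic regularity of the $\div$--$\curl$ system is what yields the $H^{1}$ bound. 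Because $\curl(H-T)=0$ and $\Omega$ is simply connected, $H-T=\nabla h$ for some $h\in H^{1}(\Omega;\C)$.

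The key step is to show that $h$ solves a scalar conormal problem whose datum involves $J_{m}$ itself, not $\div J_{m}$ nor $J_{m}\cdot\nu$. For every $\varphi\in W^{1,2}(\Omega;\C)$ I write $\int_{\Omega}\mu\nabla h\cdot\nabla\overline{\varphi}\,dx=\int_{\Omega}\mu H\cdot\nabla\overline{\varphi}\,dx-\int_{\Omega}\mu T\cdot\nabla\overline{\varphi}\,dx$ and substitute $\mu H=\ik\,\curl E-\ik\,J_{m}$, which is the second equation of \eqref{eq:maxwell}. An integration by parts gives $\int_{\Omega}\curl E\cdot\nabla\overline{\varphi}\,dx=\int_{\partial\Omega}(\curl E\cdot\nu)\,\overline{\varphi}\,d\sigma$ (as $\div\curl E=0$), and this pairing vanishes because $E\times\nu=0$ forces $\curl E\cdot\nu=0$ on $\partial\Omega$, which is exactly identity \eqref{eq:monk}. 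One is left with $\int_{\Omega}(\mu\nabla h+\ik\,J_{m}+\mu T)\cdot\nabla\overline{\varphi}\,dx=0$ for all $\varphi\in W^{1,2}(\Omega;\C)$, i.e.\ $h$ is a weak solution of the conormal problem of Proposition~\ref{prop:campanato_regularity} with datum $F=\ik\,J_{m}+\mu T$.

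It remains to check $F\in L^{2,\lambda}(\Omega;\C^{3})$ and to apply \eqref{eq:estimate campanato}. The term $J_{m}$ lies in $L^{2,\lambda}$ by hypothesis. For $\mu T$, no regularity of $\mu$ beyond \eqref{eq:Hyp-Ellip} is needed: since $\mu\in L^{\infty}$ and $T\in H^{1}(\Omega)\hookrightarrow L^{6}(\Omega)$, we have $\mu T\in L^{6}(\Omega;\C^{3})$, and a direct H\"older estimate shows the continuous embedding $L^{6}(\Omega)\hookrightarrow L^{2,\lambda}(\Omega)$ for every $\lambda\le 2$ (so in particular for $\lambda<\lambda_{\mu}\le2$), whence $\|\mu T\|_{L^{2,\lambda}(\Omega)}\le C\|T\|_{H^{1}(\Omega)}$. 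Estimate \eqref{eq:estimate campanato} then gives $\|\nabla h\|_{L^{2,\lambda}(\Omega)}\le C\|F\|_{L^{2,\lambda}(\Omega)}\le C(\|J_{m}\|_{L^{2,\lambda}(\Omega)}+\|T\|_{H^{1}(\Omega)})$; combining this with $\|T\|_{L^{2,\lambda}(\Omega)}\le C\|T\|_{H^{1}(\Omega)}$, the decomposition $H=T+\nabla h$, and the bound on $\|T\|_{H^{1}}$ produces \eqref{eq:estimate campanato H}.

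I expect the main obstacle to be the vector-potential step, namely securing $T\in H^{1}$ with $\|T\|_{H^{1}}\lesssim\|\curl H\|_{L^{2}}$ together with the representation $H-T=\nabla h$: this is exactly where the simple connectedness of $\Omega$ and the $\Cl^{1,1}$ regularity of $\partial\Omega$ are used. The one structural subtlety downstream is that \eqref{eq:estimate campanato} may be invoked only for $\lambda<\lambda_{\mu}$, and it is precisely here that $\Im\mu=0$ is indispensable: for genuinely complex $\mu$ there is no De Giorgi--Nash exponent $\lambda_{\mu}>1$, so the range $1<\lambda<\lambda_{\mu}$ would be empty and the argument would break down. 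Everything else is bookkeeping with Campanato embeddings and a single integration by parts made harmless by the boundary condition on $E$.
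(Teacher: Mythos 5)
Your proof is correct and follows essentially the same route as the paper's: the same Helmholtz decomposition $H=T+\nabla h$ built from a vector potential $T$ for the divergence-free field $\ii\k\varepsilon E+J_{e}$, the same conormal problem for $h$ with datum $\mu T+\ii\k^{-1}J_{m}$ whose boundary condition is justified by \eqref{eq:monk}, and the same application of estimate \eqref{eq:estimate campanato} from Proposition~\ref{prop:campanato_regularity}. The only cosmetic differences are that you derive the weak formulation for $h$ directly by testing against $W^{1,2}(\Omega;\C)$ functions (rather than stating the strong form of the Neumann problem), and that you place $\mu T$ in $L^{2,\lambda}(\Omega;\C^{3})$ via the H\"older embedding $L^{6}(\Omega)\hookrightarrow L^{2,\lambda}(\Omega)$ instead of invoking Lemma~\ref{lem:campanato_properties}; both routes are equivalent.
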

\begin{proof}
Since $\ii\k\varepsilon E+J_{e}$ is divergence free in $\Omega$,
and $\Omega$ is $\Cl^{1,1}$ and simply connected, it is well known
that there exists $T\in H^{1}(\Omega)$ such that $\ii\k\varepsilon E+J_{e}=\curl T$,
satisfying 
\begin{equation}
\left\Vert T\right\Vert _{H^{1}\left(\Omega\right)}\leq C\bigl(\left\Vert J_{e}\right\Vert _{L^{2}(\Omega)}+\left\Vert E\right\Vert _{L^{2}(\Omega)}\bigr)\label{eq:campa1}
\end{equation}
 where $C$ depends on $\Omega$, $\Lambda$ given in \eqref{eq:Hyp-Ellip}
and $\omega$ only, see e.g. \cite[Chapter I, Theorem 3.5]{GIRAULT-RAVIART-1986}.
Thanks to Lemma~\ref{lem:campanato_properties}, this implies $\mu T\in L^{2,2}(\Omega;\C^{3})\subset L^{2,\lambda}(\Omega;\C^{3})$,
and therefore $\mu T+i\k^{-1}J_{m}\in L^{2,\lambda}(\Omega;\C^{3})$.

As $H-T$ is curl free in $\Omega$, in view of \cite[Chapter I, Theorem 2.9]{GIRAULT-RAVIART-1986}
there exists $h\in H^{1}(\Omega;\C)$ such that $H-T=\nabla h$. The
potential $h$ is defined up to a constant by 
\[
\left\{ \begin{array}{l}
\div({\mu}\nabla h)=\div(-\mu T-i\k^{-1}J_{m})\qquad\mbox{ in }\Omega,\\
{\mu}\nabla h\cdot\nu=(-\mu T-i\k^{-1}J_{m})\cdot\nu\qquad\mbox{ on }\partial\Omega.
\end{array}\right.
\]
 Note that the boundary condition follows from that of $E$ and \eqref{eq:monk}.
Thanks to estimate \eqref{eq:estimate campanato} in Proposition~\ref{prop:campanato_regularity},
we have 
\begin{equation}
\left\Vert \nabla h\right\Vert _{L^{2,\lambda}\left(\Omega;\mathbb{C}^{3}\right)}\le C\left\Vert \mu T+i\k^{-1}J_{m}\right\Vert _{L^{2,\lambda}\left(\Omega;\mathbb{C}^{3}\right)}\leq\tilde{C}\bigl(\left\Vert T\right\Vert _{H^{1}\left(\Omega\right)}+\left\Vert J_{m}\right\Vert _{L^{2,\lambda}\left(\Omega;\mathbb{C}^{3}\right)}\bigr).\!\!\!\!\!\!\!\!\label{eq:campa2}
\end{equation}
 The conclusion follows from the identity $H=T+\nabla h$ and the
estimates \eqref{eq:campa1} and \eqref{eq:campa2}. 
\end{proof}
We now adapt Proposition~\ref{prop:bootstrap step for E} to be able
to use Campanato estimates in the bootstrap argument. 
\begin{prop}
\label{prop:bootstrap step for E-campanato}Assume that $\Omega$
is simply connected, that \eqref{eq:Hyp-Ellip} holds with $\Im{\mu}=0$
and that \eqref{eq:Hyp2-epsilon} holds. Suppose $J_{m}\in L^{2,\tilde{\lambda}}(\Omega;\C^{3})$
and $\div J_{e}\in L^{2,\tilde{\lambda}}(\Omega;\C)$ for some $1<\tilde{\lambda}<\lambda_{{\mu}}$,
where $\lambda_{{\mu}}$ is given by Proposition~\ref{prop:campanato_regularity}.
Let $E$ and $H$ in $\Hcurl$ be weak solutions of \eqref{eq:maxwell}
with $G=0$.

If $\nabla E\in L^{2,\lambda_{0}}\left(\Omega;\mathbb{C}^{3\times3}\right)$
for some $\lambda_{0}\in[0,\infty)\setminus\{1\}$ then $\nabla E\in L^{2,\lambda_{1}}(\Omega;\C)^{9}$,
with $\lambda_{1}=\min\left(\tilde{\lambda},\lambda_{0}+2\delta(3+\delta)^{-1},3(1+\delta)(3+\delta)^{-1}\right)$.
Moreover there holds
\begin{multline}
\left\Vert \nabla E\right\Vert _{L^{2,\lambda_{1}}(\Omega;\C)^{9}}\le C\left(\left\Vert E\right\Vert _{L^{2}(\Omega)}+\left\Vert \nabla E\right\Vert _{L^{2,\lambda_{0}}(\Omega;\C)^{9}}+\left\Vert J_{e}\right\Vert _{L^{2}(\Omega)}\right.\\
+\left.\left\Vert J_{m}\right\Vert _{L^{2,\tilde{\lambda}}(\Omega;\C^{3})}+\left\Vert \div J_{e}\right\Vert _{L^{2,\tilde{\lambda}}(\Omega;\C)}\right),\label{eq:bootstrap campanato}
\end{multline}
where the constant $C$ depends only on $\Omega$, $\Lambda$, $\lambda_{1}$,
$\k$ and $\left\Vert \varepsilon\right\Vert _{W^{1,3+\delta}\left(\Omega;\mathbb{C}^{3\times3}\right)}$. \end{prop}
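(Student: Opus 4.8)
The plan is to read, for each component $E_k$, the scalar elliptic equation \eqref{eq:a} through the Campanato estimates of Proposition~\ref{prop:campanato_regularity}, mirroring the four steps of Proposition~\ref{prop:bootstrap step for E} but with Lemma~\ref{lem:very weak_boundary} replaced by \eqref{eq:estimate campanato-1}. Writing the equation as $-\div(\varepsilon\nabla E_k)=\div F_k$ with
\[
F_k=(\partial_k\varepsilon)\,E-\varepsilon\bigl(\mathbf{e}_k\times(J_m-\ii\k\mu H)\bigr)-\ik\,\mathbf{e}_k\,\div J_e,
\]
the crux is to locate $F_k$ in $L^{2,\lambda_1}(\Omega;\C^{3})$ and then to run a component-by-component elliptic estimate.

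First I would pin down the Campanato class of the source. Since $\Im\mu=0$ and $J_m\in L^{2,\tilde\lambda}$ with $1<\tilde\lambda<\lambda_{\mu}$, Proposition~\ref{prop:H campanato} gives $H\in L^{2,\tilde\lambda}(\Omega;\C^{3})$, together with the bound by $\|E\|_{L^2}$, $\|J_e\|_{L^2}$ and $\|J_m\|_{L^{2,\tilde\lambda}}$. The leading term $(\partial_k\varepsilon)E$ is governed by Lemma~\ref{lem:campanato_properties}(3): with $f=\partial_k\varepsilon\in L^{3+\delta}$ and $\nabla E\in L^{2,\lambda_0}$, $\lambda_0\neq1$, it lies in $L^{2,\lambda'}$ with $\lambda'=\min(\lambda_0+2\delta(3+\delta)^{-1},3(1+\delta)(3+\delta)^{-1})$. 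For the remaining terms I would use that, since $\tilde\lambda<\lambda_{\mu}\le2<3$, the space $L^{2,\tilde\lambda}$ coincides with the corresponding Morrey space and is therefore stable under multiplication by the bounded coefficients $\varepsilon,\mu$ of \eqref{eq:Hyp-Ellip}; hence $\varepsilon(\mathbf{e}_k\times J_m)$, $\varepsilon(\mathbf{e}_k\times\mu H)$ and $\mathbf{e}_k\,\div J_e$ all belong to $L^{2,\tilde\lambda}$. Taking the minimum of the exponents yields $F_k\in L^{2,\lambda_1}$ with the advertised $\lambda_1=\min(\tilde\lambda,\lambda')$ and a norm bound matching the right-hand side of \eqref{eq:bootstrap campanato}.

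Next I would feed this into the elliptic estimate, following Steps 2--4 of Proposition~\ref{prop:bootstrap step for E}. After the $C^{1,1}$ flattening and localization by a cut-off $\chi$ — which preserves membership in $L^{2,\lambda}$ for $\lambda<3$, where Campanato equals Morrey and the Jacobian $\nabla\Phi$ is Lipschitz — the interior pieces and the two tangential components both reduce to homogeneous Dirichlet problems for $\chi E_j$ (using $E\times\nu=0$). Applying \eqref{eq:estimate campanato-1} with $\varepsilon$ in place of the coefficient (licit since $\varepsilon\in W^{1,3+\delta}$ and that estimate holds for every $\lambda\in[0,2]$, while $\lambda_1\le\tilde\lambda<2$) and pushing the cut-off commutators into the $L^2$ forcing slot permitted by that estimate yields $\nabla E_1,\nabla E_2\in L^{2,\lambda_1}$. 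For the normal component I would repeat the quotient argument of Step 3, now modulo $L^{2,\lambda_1}$: from $\curl E=-\ii\k\mu H+J_m\in L^{2,\tilde\lambda}\subseteq L^{2,\lambda_1}$ and the already controlled tangential gradients, the identity $\mathbf{e}_3\times\nabla E_3\equiv0$ forces $\partial_1E_3,\partial_2E_3\in L^{2,\lambda_1}$, while $\div(\varepsilon E)=\ik\,\div J_e\in L^{2,\tilde\lambda}$ together with $\mathbf{e}_3\cdot\varepsilon\mathbf{e}_3\neq0$ from \eqref{eq:Hyp-Ellip} recovers $\partial_3E_3$. A finite covering of $\overline{\Omega}$ then assembles the local estimates into \eqref{eq:bootstrap campanato}.

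I expect the boundary analysis to be the main obstacle, on two fronts. The first, technical but needing care, is that the $C^{1,1}$ change of variables flattening $\partial\Omega$ preserves the Campanato scale even though $\nabla\Phi$ is only Lipschitz; this rests on the coincidence of Campanato and Morrey for $\lambda<3$ and on the persistence of $W^{1,3+\delta}$-ellipticity for the transformed coefficient $\tilde\varepsilon=(\nabla\Phi)\varepsilon(\nabla\Phi)^{T}$. The second, more structural, is the recovery of the normal component: the divergence identity only delivers the exponent $\tilde\lambda$ whereas the curl identity delivers $\lambda_1$, so one must verify $\lambda_1\le\tilde\lambda$ — which holds by the very definition of $\lambda_1$ — in order that the two contributions combine into the single exponent $\lambda_1$ rather than degrading it.
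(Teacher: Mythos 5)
Your proposal is correct and follows essentially the same route as the paper's proof: the same source decomposition $F_k=\partial_k\varepsilon\,E+S_k$ placed in $L^{2,\lambda_1}$ via Proposition~\ref{prop:H campanato} and Lemma~\ref{lem:campanato_properties}(3), the same application of the Dirichlet estimate \eqref{eq:estimate campanato-1} (with $\varepsilon$ in lieu of $\mu$) to the localized interior and tangential problems after a $\Cl^{1,1}$ flattening, and the same curl/divergence quotient argument for the normal component, assembled by a covering argument. The points you flag as delicate (invariance of the Campanato class under the change of variables via the multiplier property, and $\lambda_1\le\tilde{\lambda}$) are exactly the ones the paper also notes or uses implicitly.
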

\begin{proof}
In view of Theorem~\ref{thm:H1 for E} and Proposition~\ref{prop:coupled_system},
for each $k=1,2,3$, $E_{k}\in H^{1}(\Omega;\C)$ is a weak solution
of 
\begin{equation}
-\div\left(\varepsilon\nabla E_{k}\right)=\div\left(\partial_{k}\varepsilon\, E+S_{k}\right)\textrm{ in }\Omega,\label{eq:campa-E1}
\end{equation}
 with 
\[
S_{k}=-\varepsilon\left(\mathbf{e}_{k}\times\left(J_{m}-\ii\k\mu H\right)\right)-\ik\mathbf{e}_{k}\div J_{e}.
\]
 Thanks to Proposition~\ref{prop:H campanato} we have that $S_{k}\in L^{2,\tilde{\lambda}}(\Omega;\C)$.
Furthermore there holds $\partial_{k}\varepsilon E\in L^{2,\tilde{\lambda}_{0}}\left(\Omega;\mathbb{C}^{3}\right)$
with $\tilde{\lambda}_{0}=\min\left(\lambda_{0}+2\delta(3+\delta)^{-1},3(1+\delta)(3+\delta)^{-1}\right)$
in view of Lemma~\ref{lem:campanato_properties}. Thus 
\begin{equation}
\partial_{k}\varepsilon\, E+S_{k}\in L^{2,\lambda_{1}}\left(\Omega;\mathbb{C}^{3}\right).\label{eq:rhs-est-campa}
\end{equation}

\emph{Interior regularity.} Given a smooth open subdomain $\Omega_{0}$
such that $\overline{\Omega_{0}}\subset\Omega$, introduce a cut-off
function $\chi\in\Domega$ such that $\chi=1$ in $\Omega_{0}$. From
\eqref{eq:campa-E1} we deduce 
\begin{equation}
-\div\left(\varepsilon\nabla(\chi E_{k})\right)=\div\left(\chi\left(\partial_{k}\varepsilon\, E+S_{k}\right)\right)+f_{k}\textrm{ in }\Omega,\label{eq:Ek campanato equation-1}
\end{equation}
 where 
\[
f_{k}=-\nabla\chi\cdot\left(\partial_{k}\varepsilon\, E+S_{k}\right)-\varepsilon\nabla E_{k}\cdot\nabla\chi-\div(\varepsilon E_{k}\nabla\chi)\in L^{2}\left(\Omega;\mathbb{C}\right).
\]
 As $\lambda_{1}<2$ and $\varepsilon$ satisfies \eqref{eq:Hyp2-epsilon},
we may apply Proposition~\ref{prop:campanato_regularity} (with $\varepsilon$
in lieu of $\mu$) to show that $\nabla(\chi E_{k})$ is in $L^{2,\lambda_{1}}\left(\Omega;\mathbb{C}^{3}\right)$,
which implies $\nabla E\in L^{2,\lambda_{1}}\left(\Omega_{0};\mathbb{C}^{3\times3}\right)$.

\emph{Boundary regularity.} By using a change of coordinates as in
the proof of Proposition \ref{prop:bootstrap step for E}, we can
assume without loss of generality that $\Omega\cap B(0,R)=\{x\cdot\mathbf{e}_{3}<0\}\cap B(0,R)$.
Indeed, the assumption $\partial\Omega\in\Cl^{1,1}$ implies that
the regularity assumptions on $\epsilon$ and on the source terms
and the condition $\nabla E\in L^{2,\lambda_{1}}$ are insensitive
to a $\Cl^{1,1}$ change of coordinates, as $L^{\infty}$ is a multiplier
space for $L^{2,\lambda_{1}}$.

Let us focus on the tangential components first. Take $\chi\in\mathcal{D}(B(0,R);\mathbb{R})$
such that $\chi=1$ in a neighbourhood $\tilde{B}$ of $0$ and $j\in\{1,2\}$.
Identity \eqref{eq:campa-E1} yields, for $j=1,2$ 
\[
-\div\bigl(\varepsilon\nabla(\chi{E}_{j})\bigr)=\div\left(\chi\left(\partial_{j}\varepsilon\, E+S_{j}\right)\right)+f_{j}\textrm{ in }\Omega,
\]
 where $f_{j}=-\nabla\chi\cdot\left(\partial_{j}\varepsilon\, E+S_{j}\right)-\varepsilon\nabla E_{j}\cdot\nabla\chi-\div(\varepsilon E_{j}\nabla\chi)\in L^{2}(\Omega;\C)$.
Note that $E\times\nu=0$ on $\partial\Omega$ implies $\chi{E}_{1}=\chi{E}_{2}=0$
on $\partial\Omega$. Proposition~\ref{prop:campanato_regularity}
together with \eqref{eq:rhs-est-campa} then shows that $\nabla(\chi{E}_{j})$
belongs to $L^{2,\lambda_{1}}\left(\Omega;\mathbb{C}^{3}\right)$
for $j=1,2$. Arguing as in the proof of Proposition~\ref{prop:bootstrap step for E},
we also derive that $\nabla(\chi{E}{}_{3})\in L^{2,\lambda_{1}}\left(\Omega;\mathbb{C}^{3}\right)$.
Therefore $\nabla(\chi{E})\in L^{2,\lambda_{1}}\bigl(\tilde{B};\mathbb{C}^{3\times3}\bigr)$,
and in turn $\nabla E\in L^{2,\lambda_{1}}\bigl(\tilde{B};\mathbb{C}^{3\times3}\bigr)$.

\emph{Global regularity.} Combining the interior and the boundary
estimates we obtain that $\nabla E$ is in $L^{2,\lambda_{1}}\left(\Omega;\mathbb{C}^{3\times3}\right)$,
together with \eqref{eq:bootstrap campanato}.
\end{proof}
We are now ready to prove the global Hölder regularity result.
\begin{proof}[Proof of Theorem~\ref{thm:global campanato for E}]
 Considering the system satisfied by $E-G$ and $H$, we may assume
$G=0$. Choose any $\tilde{\lambda}>1$ such that $\tilde{\lambda}<\lambda_{{\mu}}$
and $\tilde{\lambda}\le3\frac{p-2}{p}$. Hölder's inequality shows
that $J_{m}\in L^{2,\tilde{\lambda}}(\Omega;\C^{3})$ and $\div J_{e}\in L^{2,\tilde{\lambda}}(\Omega;\C)$.
We apply Proposition~\ref{prop:bootstrap step for E-campanato} a
finite number of times, starting with $\nabla E\in L^{2,\lambda_{n}}\left(\Omega;\mathbb{C}^{3\times3}\right)$
for some $\lambda_{n}<1$ (in the initial step we take $\lambda_{0}=0$,
in view of Theorem~\ref{thm:H1 for E}), and obtain that $\nabla E\in L^{2,\lambda_{n+1}}\left(\Omega;\mathbb{C}^{3\times3}\right)$,
with $\lambda_{n+1}=\min\bigl(\tilde{\lambda},(n+1)2\delta\left(\delta+3\right)^{-1}\bigr)$.
If $\lambda_{n+1}=1$, Proposition~\ref{prop:bootstrap step for E-campanato}
could not be applied another time (as $\lambda_{0}=1$ is excluded).
An easy workaround of course is to reduce $\delta$ to a nearby irrational
(just for this step), and proceed. We stop the iterative procedure
as soon as $\lambda_{n+1}>1$ and we infer that $\nabla E\in L^{2,\lambda}\left(\Omega;\mathbb{C}^{3\times3}\right)$
for some $1<\lambda\le\tilde{\lambda}$. A final application of Proposition~\ref{prop:bootstrap step for E-campanato}
gives $\nabla E\in L^{2,\min\left(\tilde{\lambda},3(1+\delta)(3+\delta)^{-1}\right)}\left(\Omega;\mathbb{C}^{3\times3}\right)$;
the result then follows from Lemma~\ref{lem:campanato_properties}.
\end{proof}
\bibliographystyle{abbrv}
\bibliography{biblio}

\appendix

\section{Proof of Theorem~\ref{thm:C 0 alpha -bi-anisotropic}}

The first step is to derive an appropriate very weak formulation. 
\begin{prop}
\label{prop:coupled_system-bianosotropic} Under the hypotheses of
Theorem~\ref{thm:C 0 alpha -bi-anisotropic}, let $E,H\in H\left(\curl,\Omega\right)$
be a weak solution of \eqref{eq:maxwell for bi-anisotropic-1}.

Then for each $k=1,2,3$, $(E_{k},H_{k})$ is a very weak solution
of the elliptic system 
\[
\left\{ \!\begin{array}{l}
\!\!-\div\left(\varepsilon\nabla E_{k}+\xi\nabla H_{k}\right)=\div\left((\partial_{k}\varepsilon)E+\left(\partial_{k}\xi\right)H-\varepsilon\left(\mathbf{e}_{k}\!\times\!\left(-\ii\k\zeta E-\ii\k\mu H+J_{m}\right)\right)\right)\\
\qquad\qquad\qquad\qquad\quad\;+\div\!\left(-\xi\left(\mathbf{e}_{k}\!\times\!\left(\ii\k\varepsilon E+\ii\k\xi H+J_{e}\right)\right)-\ik\mathbf{e}_{k}\div J_{e}\right)\textrm{ in }\Omega.\\
\!\!-\div\left(\zeta\nabla E_{k}+{\mu}\nabla H_{k}\right)=\div\left((\partial_{k}\zeta)E+\left(\partial_{k}\mu\right)H-{\mu}\left(\mathbf{e}_{k}\times\left(\ii\k\varepsilon E+\ii\k\xi H+J_{e}\right)\right)\right)\\
\qquad\qquad\qquad\qquad\quad\;+\div\!\left(\zeta\left(\mathbf{e}_{k}\!\times\!\left(\ii\k\zeta E+\ii\k\mu H-J_{m}\right)\right)+\ik\mathbf{e}_{k}\div J_{m}\right)\textrm{ in }\Omega.
\end{array}\right.
\]
 More precisely, for any $\varphi\in W^{2,2}(\Omega;\C)$ there holds
\begin{eqnarray}
 & \int_{\Omega}E_{k}\div\left(\tepsi\nabla\overline{\varphi}\right)\, dx+\int_{\Omega}H_{k}\div\left(\xi^{T}\nabla\overline{\varphi}\right)\, dx=\int_{\Omega}(\left(\partial_{k}\varepsilon\right)E+\left(\partial_{k}\xi\right)H)\cdot\nabla\overline{\varphi}\, dx\nonumber \\
 & \qquad-\int_{\Omega}\!\left(\varepsilon\left(\mathbf{e}_{k}\times\left(-\ii\k\zeta E-\ii\k\mu H+J_{m}\right)\right)+\xi\left(\mathbf{e}_{k}\!\times\!\left(\ii\k\varepsilon E+\ii\k\xi H+J_{e}\right)\right)\right)\cdot\nabla\overline{\varphi}\, dx\nonumber \\[-1.5ex]
\label{eq:very weak E bi-anisotropic}\\[-1.5ex]
 & \qquad-\int_{\Omega}\left(\ii\k^{-1}\div J_{e}\mathbf{e}_{k}\right)\cdot\nabla\overline{\varphi}\, dx+\!\int_{\partial\Omega}(\partial_{k}\overline{\varphi})(\varepsilon E+\xi H)\cdot\nu\, d\sigma\nonumber \\
 & -\int_{\partial\Omega}\left(\mathbf{e}_{k}\times\left(H\times\nu\right)\right)\cdot(\xi^{T}\nabla\overline{\varphi})\, d\sigma-\int_{\partial\Omega}\left(\mathbf{e}_{k}\times\left(E\times\nu\right)\right)\cdot(\tepsi\nabla\overline{\varphi})\, d\sigma,\nonumber 
\end{eqnarray}
 and 
\begin{eqnarray}
 & \int_{\Omega}E_{k}\div\left(\zeta^{T}\nabla\overline{\varphi}\right)\, dx+\int_{\Omega}H_{k}\div\left(\tmuu\nabla\overline{\varphi}\right)\, dx=\int_{\Omega}(\left(\partial_{k}\zeta\right)E+\left(\partial_{k}{\mu}\right)H)\cdot\nabla\overline{\varphi}\, dx\nonumber \\
 & \qquad-\int_{\Omega}\left({\mu}\left(\mathbf{e}_{k}\times\left(\ii\k\varepsilon E+\ii\k\xi H+J_{e}\right)\right)-\zeta\left(\mathbf{e}_{k}\times\left(\ii\k\zeta E+\ii\k\mu H-J_{m}\right)\right)\right)\cdot\nabla\overline{\varphi}\, dx\nonumber \\[-1.5ex]
\label{eq:very weak H bi-anisotropic}\\[-1.5ex]
 & \qquad+\int_{\Omega}\left(\ii\k^{-1}\div J_{m}\mathbf{e}_{k}\right)\cdot\nabla\overline{\varphi}\, dx+\int_{\partial\Omega}\!(\partial_{k}\overline{\varphi})(\zeta E+\mu H)\cdot\nu\, d\sigma\nonumber \\
 & -\int_{\partial\Omega}\left(\mathbf{e}_{k}\times\left(E\times\nu\right)\right)\cdot(\zeta^{T}\nabla\overline{\varphi})\, d\sigma-\int_{\partial\Omega}\left(\mathbf{e}_{k}\times\left(H\times\nu\right)\right)\cdot(\tmuu\nabla\overline{\varphi})\, d\sigma.\nonumber 
\end{eqnarray}
 \end{prop}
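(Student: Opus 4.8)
The plan is to mirror the derivation of Proposition~\ref{prop:coupled_system}, the only new feature being that each of the two curl equations in \eqref{eq:maxwell for bi-anisotropic-1} now couples $E$ and $H$. Accordingly, rather than the single vector identity \eqref{eq:H-1b}, I would produce two of them, one from each equation, and then combine them against two different test vectors to assemble each of the coupled scalar identities \eqref{eq:very weak E bi-anisotropic} and \eqref{eq:very weak H bi-anisotropic}. First I would multiply $\curl E=-\ii\k(\zeta E+\mu H)+J_{m}$ by $\overline{g}\mathbf{e}_{l}$ with $g\in W^{1,2}(\Omega;\C)$, integrate the curl by parts, sum over $l$, cross the resulting vector identity with $\mathbf{e}_{k}$, take the scalar product with $\mathbf{e}_{i}$, and apply $a\times(b\times c)=(a\cdot c)b-(a\cdot b)c$, exactly as in Proposition~\ref{prop:coupled_system}. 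This yields, for all $i,k$, the analogue of \eqref{eq:H-1b} for $E$, with the source $-\ii\k\mu H+J_{m}$ replaced by $-\ii\k\zeta E-\ii\k\mu H+J_{m}$. The identical manipulation applied to $\curl H=\ii\k(\varepsilon E+\xi H)+J_{e}$ gives a second identity governing $H_{i}\partial_{k}\overline{g}-H_{k}\partial_{i}\overline{g}$, with source $\ii\k\varepsilon E+\ii\k\xi H+J_{e}$ and boundary trace $\mathbf{e}_{k}\times(H\times\nu)$.

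For the first identity \eqref{eq:very weak E bi-anisotropic} I would then set $g=(\overline{\varepsilon}^{T}\nabla\varphi)_{i}$ in the $E$-identity and $g=(\overline{\xi}^{T}\nabla\varphi)_{i}$ in the $H$-identity, sum over $i$, and add. Since \eqref{eq:Hyp2-epsilon-xi-mu-zeta} guarantees $\overline{\varepsilon}^{T}\nabla\varphi,\overline{\xi}^{T}\nabla\varphi\in H^{1}(\Omega)$ for $\varphi\in W^{2,2}(\Omega;\C)$, these are admissible test vectors. Summation produces exactly the left-hand divergence terms $\int_{\Omega}E_{k}\div(\tepsi\nabla\overline{\varphi})+\int_{\Omega}H_{k}\div(\xi^{T}\nabla\overline{\varphi})$, the volume source terms carrying $\varepsilon(\mathbf{e}_{k}\times\cdots)$ and $\xi(\mathbf{e}_{k}\times\cdots)$, the two boundary traces involving $\mathbf{e}_{k}\times(E\times\nu)$ and $\mathbf{e}_{k}\times(H\times\nu)$, together with the combined gradient term $\int_{\Omega}E\cdot\partial_{k}(\tepsi\nabla\overline{\varphi})+\int_{\Omega}H\cdot\partial_{k}(\xi^{T}\nabla\overline{\varphi})$, which is not yet in final form.

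To recast this last term I would test the first equation written as $\curl H-J_{e}=\ii\k(\varepsilon E+\xi H)$ against $\frac{1}{\ii\k}\nabla(\partial_{k}\overline{\varphi})$, integrate by parts using $\div\curl H=0$, and replace $\curl H\cdot\nu$ on $\partial\Omega$ by $(\ii\k(\varepsilon E+\xi H)+J_{e})\cdot\nu$; as $J_{e}\in H(\div,\Omega)$ the boundary term is well defined. This produces the trace $\int_{\partial\Omega}(\partial_{k}\overline{\varphi})(\varepsilon E+\xi H)\cdot\nu$ and the source $-\ik\int_{\Omega}(\div J_{e})\partial_{k}\overline{\varphi}$, and expanding $\partial_{k}(\tepsi\nabla\overline{\varphi})$ and $\partial_{k}(\xi^{T}\nabla\overline{\varphi})$ by the product rule peels off the $(\partial_{k}\varepsilon)E$ and $(\partial_{k}\xi)H$ contributions; substituting back yields \eqref{eq:very weak E bi-anisotropic}. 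The second identity \eqref{eq:very weak H bi-anisotropic} follows by the same scheme with the roles interchanged, taking $g=(\overline{\zeta}^{T}\nabla\varphi)_{i}$ and $g=(\overline{\mu}^{T}\nabla\varphi)_{i}$, and recasting the gradient terms by testing $\curl E-J_{m}=-\ii\k(\zeta E+\mu H)$ against $\frac{1}{-\ii\k}\nabla(\partial_{k}\overline{\varphi})$, which is what accounts for the sign change of the $\div J_{m}$ contribution relative to the $\div J_{e}$ one.

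I expect the main difficulty to be organizational rather than conceptual: one must track the bookkeeping of the four coupled volume sources and six boundary contributions, along with the various factors of $\ii\k^{\pm1}$ and the cross products $\mathbf{e}_{k}\times(\cdot\times\nu)$, so that everything assembles precisely as stated. The only analytic points to verify along the way are that each chosen test vector lies in $H^{1}(\Omega)$ and that the normal traces of $J_{e}$ and $J_{m}$ make sense, both of which are furnished by \eqref{eq:Hyp2-epsilon-xi-mu-zeta} and the assumption $J_{e},J_{m}\in W^{1,p}(\div,\Omega)$.
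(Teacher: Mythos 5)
Your proposal is correct and is precisely the argument the paper intends: its own proof consists of the single remark that the derivation is similar to that of Proposition~\ref{prop:coupled_system}, and your adaptation — deriving the analogue of \eqref{eq:H-1b} from each of the two coupled curl equations, testing with $g=(\overline{\varepsilon}^{T}\nabla\varphi)_{i}$, $(\overline{\xi}^{T}\nabla\varphi)_{i}$ (resp. $(\overline{\zeta}^{T}\nabla\varphi)_{i}$, $(\overline{\mu}^{T}\nabla\varphi)_{i}$), and then eliminating the mixed gradient terms by testing $\curl H-J_{e}=\ii\k(\varepsilon E+\xi H)$ (resp. $\curl E-J_{m}=-\ii\k(\zeta E+\mu H)$) against $\pm\frac{1}{\ii\k}\nabla(\partial_{k}\overline{\varphi})$ — assembles exactly the stated identities, including the sign difference between the $\div J_{e}$ and $\div J_{m}$ contributions.
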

\begin{proof}
The proof is similar to that of Proposition~\ref{prop:coupled_system}.
\qquad{}
\end{proof}
We only study interior regularity for the problem at hand. The boundary
regularity does not follow easily from the method used in \S~\ref{sec:Main-H1-Ca}.
Indeed, mixed boundary terms appear in \eqref{eq:very weak E bi-anisotropic}
and \eqref{eq:very weak H bi-anisotropic}, and the technique used
in Proposition~\ref{prop:bootstrap step for E} and in Proposition
\ref{prop:bootstrap step for H}, with test functions satisfying either
Dirichlet or Neumann boundary conditions, does not apply, as both
conditions would be required simultaneously.

The ``very weak to weak'' Lemma~\ref{lem:very weak_boundary} adapted
to this mixed system is given below. 
\begin{lem}
\label{lem:very weak_boundary-1}Assume \eqref{eq:strong legendre condition}
and \eqref{eq:Hyp2-epsilon-xi-mu-zeta} hold, and let $A$ be given
by \eqref{eq:matrix A}.

Given $r\geq\frac{6}{5}$, $u\in L^{2}\left(\Omega;\mathbb{R}^{4}\right)\cap L^{r}\left(\Omega;\mathbb{R}^{4}\right)$
and $F\in W^{1,r'}\left(\Omega;\mathbb{R}^{4}\right)'$, if 
\begin{equation}
\int_{\Omega}u^{j}\partial_{\alpha}(A_{ij}^{\alpha\beta}\partial_{\beta}\varphi^{i})\, dx=\langle F_{i},\varphi^{i}\rangle,\qquad\varphi\in W^{2,2}\left(\Omega;\mathbb{R}^{4}\right)\cap W_{0}^{1,2}\left(\Omega;\mathbb{R}^{4}\right),\label{eq:very weak systems}
\end{equation}
 then $u\in W^{1,r}\left(\Omega;\mathbb{R}^{4}\right)$ and 
\begin{equation}
\left\Vert \nabla u\right\Vert _{L^{r}\left(\Omega;\mathbb{R}^{4\times3}\right)}\le C\left\Vert F\right\Vert _{W^{1,r'}\left(\Omega;\mathbb{R}^{4}\right)'},\label{eq:bound norm grad u-systems}
\end{equation}
 for some constant $C=C(r,\Omega,\Lambda,\left\Vert \varepsilon,\xi,\mu,\zeta\right\Vert _{W^{1,3}\left(\Omega;\mathbb{C}^{3\times3}\right)^{4}})$.\end{lem}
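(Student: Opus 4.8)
The plan is to run the duality argument of Lemma~\ref{lem:very weak_boundary}, now applied to the strongly elliptic system governed by $A$ rather than to a single scalar equation. As in that proof, both sides of \eqref{eq:very weak systems} are well defined: the hypothesis $r\ge\frac{6}{5}$ together with the embedding $W^{2,2}(\Omega;\R^{4})\subset W^{1,6}(\Omega;\R^{4})$ makes $u^{j}\partial_{\alpha}(A_{ij}^{\alpha\beta}\partial_{\beta}\varphi^{i})$ integrable, while $\langle F,\varphi\rangle$ is meaningful since $\varphi\in W^{1,r'}(\Omega;\R^{4})$. Since $u\in L^{r}(\Omega;\R^{4})$ is assumed, to prove $u\in W^{1,r}$ it suffices to bound, for each solution component $m\in\{1,\dots,4\}$ and each direction $\gamma\in\{1,2,3\}$, the functional $\psi\mapsto\int_{\Omega}u^{m}\,\partial_{\gamma}\psi\,dx$ on $L^{r'}(\Omega;\R)$, for $\psi\in\Domega$.

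First I would set up the dual problem. Given $\psi\in\Domega$ and $m,\gamma$ as above, let $\varphi=(\varphi^{1},\dots,\varphi^{4})$ solve the homogeneous Dirichlet system
\[
\left\{\begin{array}{l}
\partial_{\alpha}(A_{ij}^{\alpha\beta}\partial_{\beta}\varphi^{i})=\delta_{jm}\,\partial_{\gamma}\psi\quad\textrm{ in }\Omega,\ j=1,\dots,4,\\
\varphi^{i}=0\quad\textrm{ on }\partial\Omega,\ i=1,\dots,4,
\end{array}\right.
\]
whose leading coefficient is the transpose tensor $A^{T}$. Because the quadratic form $A_{ij}^{\alpha\beta}\eta_{\alpha}^{i}\eta_{\beta}^{j}$ is invariant under the exchange of the pairs $(i,\alpha)$ and $(j,\beta)$, $A^{T}$ also satisfies the strong Legendre condition \eqref{eq:strong legendre condition} with the same $\Lambda$, and by \eqref{eq:Hyp2-epsilon-xi-mu-zeta} its entries lie in $W^{1,3}(\Omega;\R)\subset\mathrm{VMO}$. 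The coercivity supplied by \eqref{eq:strong legendre condition} gives a unique $W_{0}^{1,2}$ solution, and the $W^{1,q}$ Calder\'on--Zygmund estimate for strongly elliptic systems with VMO coefficients — the system counterpart of the Auscher--Qafsaoui bound \eqref{eq:q bound} used in Lemma~\ref{lem:very weak_boundary} (see \cite{AUSCHER-QAFSAOUI-2002,CHEN-WU-1998,GIAQUINTA-MARTINAZZI-2005}) — yields, for every $q\in(1,\infty)$,
\[
\left\Vert \varphi\right\Vert _{W^{1,q}(\Omega;\R^{4})}\le C\left\Vert \psi\right\Vert _{L^{q}(\Omega;\R)},\qquad C=C\bigl(q,\Omega,\Lambda,\|\varepsilon,\xi,\mu,\zeta\|_{W^{1,3}}\bigr).
\]
In particular $\varphi\in W^{1,q}$ for all $q<\infty$.

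Next I would promote $\varphi$ to $W^{2,2}$, so that it is an admissible test function in \eqref{eq:very weak systems}. Exactly as in the scalar case, a difference-quotient argument (see \cite{GRISVARD-1985,GIAQUINTA-MARTINAZZI-2005}) does this: differencing the system and using coercivity, the error term $\div((D_{h}A^{T})\nabla\varphi)$ is controlled in $W^{-1,2}$ because $D_{h}A^{T}$ is bounded in $L^{3}$ (as $A\in W^{1,3}$) while $\nabla\varphi\in L^{6}$, so by H\"older the products lie in $L^{2}$; the smoothness of $\psi$ handles the remaining term. This gives $\varphi\in W^{2,2}(\Omega;\R^{4})\cap W_{0}^{1,2}(\Omega;\R^{4})$. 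Testing \eqref{eq:very weak systems} against this $\varphi$, and using that its $j$-th equation forces $\partial_{\alpha}(A_{ij}^{\alpha\beta}\partial_{\beta}\varphi^{i})=\delta_{jm}\partial_{\gamma}\psi$, I obtain
\[
\left|\int_{\Omega}u^{m}\,\partial_{\gamma}\psi\,dx\right|=\left|\langle F_{i},\varphi^{i}\rangle\right|\le\left\Vert F\right\Vert _{W^{1,r'}(\Omega;\R^{4})'}\left\Vert \varphi\right\Vert _{W^{1,r'}(\Omega;\R^{4})}\le C\left\Vert F\right\Vert _{W^{1,r'}(\Omega;\R^{4})'}\left\Vert \psi\right\Vert _{L^{r'}(\Omega;\R)},
\]
the last step being the $q=r'$ instance of the bound above. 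Since $\Domega$ is dense in $L^{r'}(\Omega;\R)$, the functional extends boundedly, so $\partial_{\gamma}u^{m}\in L^{r}(\Omega)$; ranging over $m$ and $\gamma$ yields $u\in W^{1,r}(\Omega;\R^{4})$ together with \eqref{eq:bound norm grad u-systems}.

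The main obstacle is precisely the $W^{1,q}$ estimate for the dual system. Unlike in Lemma~\ref{lem:very weak_boundary}, where scalar $W^{1,p}$ theory and the De Giorgi--Nash--Moser machinery are available, systems admit no maximum principle, and, as recalled in the introduction, the M\"uller--\v{S}ver\'ak example shows that mere measurable ellipticity fails to give even $W^{1,2+\delta}$. It is exactly the strong Legendre condition \eqref{eq:strong legendre condition} — full coercivity rather than Legendre--Hadamard — combined with the VMO regularity of the $W^{1,3}$ coefficients of $A^{T}$ that restores the Calder\'on--Zygmund $W^{1,q}$ theory and lets the duality argument proceed verbatim.
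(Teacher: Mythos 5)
Your proposal is correct and follows essentially the same route as the paper's proof: a duality argument solving the transposed Dirichlet system with right-hand side $\delta_{jj^{*}}\partial_{\alpha^{*}}\psi$, a $W^{1,q}$ Calder\'on--Zygmund estimate for strongly elliptic systems with $W^{1,3}\subset\mathrm{VMO}$ coefficients (the paper cites Byun--Wang for this, where you invoke the system analogue of Auscher--Qafsaoui), and a difference-quotient argument to place the dual solution in $W^{2,2}$ so it is an admissible test function. Your explicit observation that the transposed tensor $A^{T}$ inherits the strong Legendre condition with the same constant $\Lambda$ is a detail the paper leaves implicit, and is a welcome addition.
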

\begin{proof}
Let $\psi\in\mathcal{D}\left(\Omega;\mathbb{R}\right)$ be a test
function and take $\alpha^{*}\in\{1,2,3\}$ and $j^{*}\in\{1,\dots,4\}$.
Since $A$ satisfies the strong Legendre condition \eqref{eq:strong legendre condition},
the system 
\begin{equation}
\left\{ \begin{array}{l}
\partial_{\alpha}(A_{ij}^{\alpha\beta}\partial_{\beta}\varphi_{^{*}}^{i})=\delta_{jj^{*}}\partial_{\alpha^{*}}\psi\qquad\mbox{ in }\Omega,\\
\varphi_{*}=0\qquad\mbox{ on \ensuremath{\partial\Omega},}
\end{array}\right.\label{eq:test system}
\end{equation}
 has a unique solution $\varphi_{*}\in H_{0}^{1}\left(\Omega;\mathbb{R}^{4}\right)$
(see e.g. \cite{GIAQUINTA-MARTINAZZI-2005,CHEN-WU-1998}). Further,
since $A_{ij}^{\alpha\beta}\in W^{1,3}(\Omega;\mathbb{R})$, by \cite[Theorem 1.7, Remark 1.8]{BYUN-WANG-2008}
for any $q\in(1,\infty)$ 
\begin{equation}
\left\Vert \varphi_{*}\right\Vert _{W^{1,q}\left(\Omega;\mathbb{R}^{4}\right)}\le c\left\Vert \psi\right\Vert _{L^{q}\left(\Omega;\mathbb{R}^{4}\right)},\label{eq:q bound-systems}
\end{equation}
 for some $c=c(q,\Omega,\Lambda,\left\Vert \varepsilon,\xi,\mu,\zeta\right\Vert _{W^{1,3}\left(\Omega;\mathbb{C}^{3\times3}\right)^{4}})>0$.
Hence, the usual difference quotient argument given in \cite{GIAQUINTA-MARTINAZZI-2005}
shows that $\varphi_{*}\in W^{2,2}\left(\Omega;\mathbb{R}^{4}\right)$.
Therefore, by assumption we have 
\[
\left|\int_{\Omega}\! u^{j^{*}}\!\partial_{\alpha^{*}}\psi\, dx\right|\!=\!\left|\int_{\Omega}\! u^{j}\!\partial_{\alpha}(A_{ij}^{\alpha\beta}\partial_{\beta}\varphi_{^{*}}^{i})\, dx\right|\!=\!\left|\langle F_{i},\varphi_{*}^{i}\rangle\right|\!\le\!\left\Vert F\right\Vert _{W^{1,r'}\!\left(\Omega;\mathbb{R}^{4}\right)'}\!\left\Vert \varphi_{*}\right\Vert _{W^{1,r'}\!\left(\Omega;\mathbb{R}^{4}\right)}\!,
\]
 which in view of \eqref{eq:q bound-systems} gives 
\[
\left|\int_{\Omega}u^{j^{*}}\partial_{\alpha^{*}}\psi\, dx\right|\le c\left\Vert F\right\Vert _{W^{1,r'}\left(\Omega;\mathbb{R}^{4}\right)'}\left\Vert \psi\right\Vert _{L^{r'}\left(\Omega;\mathbb{R}^{4}\right)},
\]
 whence the result. 
\end{proof}
The following proposition mirrors Propositions \ref{prop:bootstrap step for E}
and \ref{prop:bootstrap step for H}. Theorem~\ref{thm:C 0 alpha -bi-anisotropic}
then follows by the bootstrap argument used in the proof of Theorem~\ref{thm:W^1,p for E and H}. 
\begin{prop}
\label{prop:bootstrap step for E,H-bi-anisotropic} Under the hypotheses
of Theorem~\ref{thm:C 0 alpha -bi-anisotropic} and given $q\in[2,\infty)$,
set $r=\min((3q+q\delta)(q+3+\delta)^{^{-1}},p)$. Let $E$ and $H$
in $\Hcurl$ be weak solutions of \eqref{eq:maxwell for bi-anisotropic-1}.

Suppose $E,H\in L^{q}(\Omega;\C^{3})$. Then $E,H\in W_{loc}^{1,r}(\Omega;\C^{3})$
and for any open subdomain $\Omega_{0}$ such that $\overline{\Omega_{0}}\subset\Omega$
there holds 
\begin{equation}
\left\Vert (E,H)\right\Vert _{W^{1,r}(\Omega_{0};\C^{3})}\le C(\left\Vert (E,H)\right\Vert _{L^{q}(\Omega;\C)^{6}}+\left\Vert (J_{e},J_{m})\right\Vert _{W^{1,p}(\div,\Omega)^{2}}),\label{eq:bound norm E,H in W^1,p - systems}
\end{equation}
 for some constant $C=C(r,\Omega,\Omega_{0},\Lambda,\k,\left\Vert \varepsilon,\xi,\mu,\zeta\right\Vert _{W^{1,\delta+3}\left(\Omega;\mathbb{C}^{3\times3}\right)^{4}})$. \end{prop}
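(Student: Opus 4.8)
The plan is to follow the interior part of the proof of Proposition~\ref{prop:bootstrap step for E}, replacing the scalar operator $\div(\tepsi\nabla\,\cdot\,)$ by the coupled operator associated with the tensor $A$ of \eqref{eq:matrix A}, and using Lemma~\ref{lem:very weak_boundary-1} in place of the scalar very-weak-to-weak Lemma~\ref{lem:very weak_boundary}. Since only interior regularity is claimed, the boundary integrals in \eqref{eq:very weak E bi-anisotropic} and \eqref{eq:very weak H bi-anisotropic} will never be activated, which is exactly what sidesteps the mixed Dirichlet/Neumann difficulty flagged before the statement of Lemma~\ref{lem:very weak_boundary-1}. First I would fix $k\in\{1,2,3\}$ and recast the two complex identities of Proposition~\ref{prop:coupled_system-bianosotropic} as a single problem for the real unknown $u=(\Re E_{k},\Im E_{k},\Re H_{k},\Im H_{k})\in\R^{4}$: splitting the complex test function $\varphi$ into real and imaginary parts, the principal terms $\int_{\Omega}E_{k}\div(\tepsi\nabla\overline{\varphi})+\int_{\Omega}H_{k}\div(\xi^{T}\nabla\overline{\varphi})$ from \eqref{eq:very weak E bi-anisotropic}, together with their companions in \eqref{eq:very weak H bi-anisotropic}, assemble precisely into the bilinear form $\int_{\Omega}u^{j}\partial_{\alpha}(A_{ij}^{\alpha\beta}\partial_{\beta}\varphi^{i})\,dx$ of Lemma~\ref{lem:very weak_boundary-1}, because the blocks of $A$ are exactly the real representation of multiplication by the complex matrices $\varepsilon,\xi,\zeta,\mu$ (the transposition built into the index placement of $A_{ij}$ is harmless, since the quadratic form in \eqref{eq:strong legendre condition} only sees the symmetric part of $A$).

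Next I would collect the right-hand sides into a source $F$ and verify that $F\in(W^{1,r'}(\Omega;\R^{4}))'$. The only terms constraining the integrability are $(\partial_{k}\varepsilon)E$, $(\partial_{k}\xi)H$, $(\partial_{k}\zeta)E$ and $(\partial_{k}\mu)H$: by \eqref{eq:Hyp2-epsilon-xi-mu-zeta} the derivatives lie in $L^{3+\delta}$, so Hölder's inequality together with $E,H\in L^{q}$ places these products in $L^{s}$ with $s=(3q+q\delta)(q+3+\delta)^{-1}$, since $\tfrac{1}{s}=\tfrac{1}{q}+\tfrac{1}{3+\delta}$. The curl-type terms such as $\varepsilon(\mathbf{e}_{k}\times(-\ii\k\zeta E-\ii\k\mu H+J_{m}))$ lie in $L^{\min(q,p)}$, and the terms $\ik\mathbf{e}_{k}\div J_{e}$ and $\ik\mathbf{e}_{k}\div J_{m}$ lie in $L^{p}$ because $J_{e},J_{m}\in W^{1,p}(\div,\Omega)$. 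As $s\le q$, all of these sit in $L^{r}\subset(W^{1,r'})'$ with $r=\min(s,p)$, matching the definition of $r$ in the statement.

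I would then localize exactly as in Step~2 of the proof of Proposition~\ref{prop:bootstrap step for E}. Fix $\Omega_{0}$ with $\overline{\Omega_{0}}\subset\Omega$ and a cut-off $\chi\in\Domega$ with $\chi\equiv1$ on $\Omega_{0}$; testing only against $\varphi\in W^{2,2}(\Omega;\R^{4})\cap W_{0}^{1,2}(\Omega;\R^{4})$ kills the boundary integrals, and transferring $\chi$ across the operator produces a very weak identity for $\chi u$ whose source differs from $F$ only by commutator terms carrying $\nabla\chi$. Because $u\in L^{q}\subset L^{r}$ and $\nabla\chi$ is bounded, these extra terms again define an element of $(W^{1,r'})'$. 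Applying Lemma~\ref{lem:very weak_boundary-1} to $\chi u$ yields $\chi u\in W^{1,r}(\Omega;\R^{4})$, hence $E,H\in W^{1,r}(\Omega_{0};\C^{3})$, together with the estimate \eqref{eq:bound norm E,H in W^1,p - systems}; recombining the real and imaginary parts and letting $k$ run over $\{1,2,3\}$ completes the argument, and the bootstrap of Theorem~\ref{thm:W^1,p for E and H} then upgrades $r$ up to $\min(p,3+\delta)$.

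The main obstacle is the bookkeeping of the first paragraph: one must check carefully that, after separating real and imaginary parts of both \eqref{eq:very weak E bi-anisotropic} and \eqref{eq:very weak H bi-anisotropic}, the principal part is governed by $A$ itself and not by some other arrangement of its blocks, so that the hypothesis \eqref{eq:strong legendre condition} is exactly the ellipticity required by Lemma~\ref{lem:very weak_boundary-1}. In particular, since the problem genuinely couples $E_{k}$ and $H_{k}$, the dual problem \eqref{eq:test system} is a true elliptic system, so the crucial $W^{1,q}$ bound \eqref{eq:q bound-systems} rests on the systems theory of \cite{BYUN-WANG-2008} rather than on the scalar estimate used in Lemma~\ref{lem:very weak_boundary}; confirming that the $W^{1,3}$ regularity of $\varepsilon,\xi,\zeta,\mu$ provided by \eqref{eq:Hyp2-epsilon-xi-mu-zeta} suffices as input for that systems result is the one genuinely non-routine point.
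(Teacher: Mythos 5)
Your proposal is correct and follows essentially the same route as the paper's own proof: localize the very weak identities of Proposition~\ref{prop:coupled_system-bianosotropic} with a compactly supported cut-off, check via the same H\"older computation that the sources lie in $L^{r}\subset(W^{1,r'}(\Omega))'$, and apply Lemma~\ref{lem:very weak_boundary-1} to the (realified) pair $(E_{k},H_{k})$, whose principal part is exactly the tensor $A$. One harmless imprecision: the boundary integrals in \eqref{eq:very weak E bi-anisotropic}--\eqref{eq:very weak H bi-anisotropic} disappear because the transferred test function $\chi\overline{\varphi}$ is compactly supported (so its gradient vanishes near $\partial\Omega$), not because $\varphi\in W_{0}^{1,2}(\Omega;\mathbb{R}^{4})$ --- the Dirichlet condition alone would not kill the terms involving $\nabla\overline{\varphi}$ on $\partial\Omega$, and is needed only to match the hypothesis of Lemma~\ref{lem:very weak_boundary-1}.
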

\begin{proof}
From \eqref{eq:very weak E bi-anisotropic} we see that for every
compactly supported $\varphi^{1},\varphi^{2}\in W^{2,2}(\Omega;\C)$
and $k=1,2,3$ there holds 
\begin{equation}
\left\{ \begin{array}{l}
\int_{\Omega}E_{k}\div\left(\tepsi\nabla\overline{\varphi^{1}}\right)+H_{k}\div\left(\xi^{T}\nabla\overline{\varphi^{1}}\right)\, dx=\int_{\Omega}F_{k}\cdot\nabla\overline{\varphi^{1}}\, dx,\\
\int_{\Omega}E_{k}\div\left(\zeta^{T}\nabla\overline{\varphi^{2}}\right)+H_{k}\div\left(\tmuu\nabla\overline{\varphi^{2}}\right)\, dx=\int_{\Omega}G_{k}\cdot\nabla\overline{\varphi^{2}}\, dx,
\end{array}\right.\label{eq:weak for bi-anisotropic}
\end{equation}
 with 
\begin{eqnarray*}
F_{k} & = & \left(\partial_{k}\varepsilon\right)E+\left(\partial_{k}\xi\right)H-\varepsilon\left(\mathbf{e}_{k}\times\left(-\ii\k\zeta E-\ii\k\mu H+J_{m}\right)\right)\\
 &  & -\xi\left(\mathbf{e}_{k}\times\left(\ii\k\varepsilon E+\ii\k\xi H+J_{e}\right)\right)-\ii\k^{-1}\div J_{e}\mathbf{e}_{k},
\end{eqnarray*}
 and 
\begin{eqnarray*}
G_{k} & = & \left(\partial_{k}\zeta\right)E+\left(\partial_{k}\mu\right)H-{\mu}\left(\mathbf{e}_{k}\times\left(\ii\k\varepsilon E+\ii\k\xi H+J_{e}\right)\right)\\
 &  & +\zeta\left(\mathbf{e}_{k}\times\left(\ii\k\zeta E+\ii\k{\mu}H-J_{m}\right)\right)+\ii\k^{-1}\div J_{m}\mathbf{e}_{k}.
\end{eqnarray*}
 By construction, $F_{k},G_{k}\in L^{r}(\Omega;\C^{3})$.

Given a smooth subdomain $\Omega_{0}$, we consider a cut-off function
$\chi\in\Domega$ such that $\chi=1$ in $\Omega_{0}$. A straightforward
computation shows 
\[
\left\{ \begin{array}{l}
\int_{\Omega}\chi E_{k}\div(\tepsi\nabla\overline{\varphi^{1}})+\chi H_{k}\div\left(\xi^{T}\nabla\overline{\varphi^{1}}\right)\, dx=\int_{\Omega}F_{k}\cdot\nabla(\chi\overline{\varphi^{1}})\, dx+T_{k}(\varphi^{1}),\\
\int_{\Omega}\chi E_{k}\div(\zeta^{T}\nabla\overline{\varphi^{2}})+\chi H_{k}\div\left({\mu}^{T}\nabla\overline{\varphi^{2}}\right)\, dx=\int_{\Omega}G_{k}\cdot\nabla(\chi\overline{\varphi^{2}})\, dx+R_{k}(\varphi^{2}),
\end{array}\right.
\]
 where 
\[
T_{k}(\varphi^{1})=-\!\int_{\Omega}\! E_{k}\bigl(\div(\tepsi\overline{\varphi^{1}}\nabla\chi)+\varepsilon\nabla\chi\cdot\nabla\overline{\varphi^{1}}\bigr)+H_{k}\bigl(\div(\xi^{T}\overline{\varphi^{1}}\nabla\chi)+\xi\nabla\chi\cdot\nabla\overline{\varphi^{1}}\bigr)\, dx,
\]
 and 
\[
R_{k}(\varphi^{2})=-\!\int_{\Omega}\! E_{k}\bigl(\div(\zeta^{T}\overline{\varphi^{2}}\nabla\chi)+\zeta\nabla\chi\cdot\nabla\overline{\varphi^{2}}\bigr)+H_{k}\bigl(\div(\tmuu\overline{\varphi^{2}}\nabla\chi)+{\mu}\nabla\chi\cdot\nabla\overline{\varphi^{2}}\bigr)\, dx.
\]
 This last system can be reformulated in the form \eqref{eq:very weak systems},
with $A$ given by \eqref{eq:matrix A}. We then apply Lemma~\ref{lem:very weak_boundary-1}
and obtain $\chi E_{k},\chi H_{k}\in W^{1,r}(\Omega;\C)$, namely
$E,H\in W^{1,r}(\Omega_{0};\C^{3})$. Finally, \eqref{eq:bound norm E,H in W^1,p - systems}
follows from \eqref{eq:bound norm grad u-systems}.\end{proof}

\end{document}